 \theoremstyle{plain}
 \newtheorem{theorem}[subsection]{Theorem}
 \newtheorem{lemma}[subsection]{Lemma}
 \newtheorem{conjecture}[subsection]{Conjecture}
 \theoremstyle{definition}
 \newtheorem{definition}[subsection]{Definition}
 \newtheorem{definitionnotation}[subsection]{Notation}
 \newtheorem{exa}[subsection]{Example}
 \newtheorem{remark}[subsection]{Remark}
 \theoremstyle{remark}
 \theoremstyle{plain} 
\newcommand{\thistheoremname}{}
\newtheorem{genericthm}[subsection]{\thistheoremname}
  \newtheorem*{genericthm*}{\thistheoremname}
\newenvironment{namedthm*}[1]
  {\renewcommand{\thistheoremname}{#1}%
   \begin{genericthm*}}
  {\end{genericthm*}}
\newcommand{\abs}[1]{\left\vert#1\right\vert}
\newcommand{\spec}{\operatorname{Spec}}
\newcommand{\codim}{\operatorname{codim}}
\newcommand{\hg}{\operatorname{H}}
\newcommand{\chara}{\operatorname{char}}
\newcommand{\supp}{\operatorname{Supp}}
\newcommand{\cl}{\operatorname{Cl}}
\newcommand{\sdeg}{\operatorname{sdeg}}
\newcommand{\ssdeg}{\operatorname{ssdeg}}
\newcommand{\centre}{\operatorname{centre}}
\newcommand{\CY}{\operatorname{\mathcal{CY}}}
\newcommand{\Spair}{\operatorname{\mathcal{CY}_{klt}}}
\newcommand{\ex}{\operatorname{Ex}}
\newcommand{\red}{\operatorname{red}}
\newcommand{\coeff}{\operatorname{coeff}}
\newcommand{\nor}{\operatorname{nor}}
\newcommand{\vol}{\operatorname{vol}}
\newcommand{\mult}{\operatorname{mult}}
\newcommand{\tr}{\operatorname{tr}}
\newcommand{\bdiv}{\operatorname{b}}
\newcommand{\cur}{\operatorname{cur}}
\newcommand{\mni}{\medskip\noindent}
\newcommand{\mbb}{\mathbb}
\newcommand{\QQ}{\mbb{Q}}
\newcommand{\NN}{\mbb{N}}
\newcommand{\ZZ}{\mbb{Z}}
\newcommand{\CC}{\mbb{C}}
\newcommand{\RR}{\mbb{R}}
\newcommand{\PP}{\mbb{P}}
\newcommand{\mc}{\mathcal}
\newcommand{\mcP}{\mc{P}}
\newcommand{\mf}{\mathfrak}
\newcommand{\mb}{\mathbf}
\newcommand{\wh}{\widehat}
\newcommand{\wt}{\widetilde}
\newcommand{\ol}{\overline}
\DeclarePairedDelimiter\floor{\lfloor}{\rfloor}
\newcommand*{\da@rightarrow}{\mathchar"0\hexnumber@\symAMSa 4B }
\newcommand*{\da@leftarrow}{\mathchar"0\hexnumber@\symAMSa 4C }
\newcommand*{\xdashrightarrow}[2][]{%
  \mathrel{%
    \mathpalette{\da@xarrow{#1}{#2}{}\da@rightarrow{\,}{}}{}%
  }%
}
\newcommand{\xdashleftarrow}[2][]{%
  \mathrel{%
    \mathpalette{\da@xarrow{#1}{#2}\da@leftarrow{}{}{\,}}{}%
  }%
}
\newcommand*{\da@xarrow}[7]{%
  \sbox0{$\ifx#7\scriptstyle\scriptscriptstyle\else\scriptstyle\fi#5#1#6\m@th$}%
  \sbox2{$\ifx#7\scriptstyle\scriptscriptstyle\else\scriptstyle\fi#5#2#6\m@th$}%
  \sbox4{$#7\dabar@\m@th$}%
  \dimen@=\wd0 %
  \ifdim\wd2 >\dimen@
    \dimen@=\wd2 %
  \fi
  \count@=2 %
  \def\da@bars{\dabar@\dabar@}%
  \@whiledim\count@\wd4<\dimen@\do{%
    \advance\count@\@ne
    \expandafter\def\expandafter\da@bars\expandafter{%
      \da@bars
      \dabar@ 
    }%
  }%
  \mathrel{#3}%
  \mathrel{%
    \mathop{\da@bars}\limits
    \ifx\\#1\\%
    \else
      _{\copy0}%
    \fi
    \ifx\\#2\\%
    \else
      ^{\copy2}%
    \fi
  }%
  \mathrel{#4}%
}
\title{\large S\MakeLowercase{tein degree on log} C\MakeLowercase{alabi-}Y\MakeLowercase{au fibrations}}
\thanks{2020 MSC:
14B05, 
14D06, 
14J27, 
14J45, 
14M25. 
}
\author{\large C\MakeLowercase{aucher} B\MakeLowercase{irkar} \MakeLowercase{and} \large S\MakeLowercase{antai} Q\MakeLowercase{u}}
\date{\today}
\begin{document}

\begin{abstract}
    We prove a conjecture proposed by the first author on 
    boundedness of Stein degree of divisors on log Calabi-Yau fibrations.
    More precisely, for $d\in \NN$ and $t\in (0,1]$, 
    let $(X, B)\to Z$ be a log Calabi-Yau fibration of relative dimension $d$, 
    and let $S$ be a horizontal$/Z$ irreducible component
    of $B$ whose coefficient in $B$ is $\ge t$.  
    We show that the number of irreducible components of a general fibre of $S\to Z$ is bounded
    from above depending only on $d,t$.
\end{abstract}

\maketitle

\tableofcontents

\addtocontents{toc}{\protect\setcounter{tocdepth}{1}}


\section{Introduction}

We work over an algebraically closed field $\mbb K$ of characteristic zero unless stated otherwise.


\mni
{\textbf{\sffamily{Boundedness of Stein degree of divisors on log Calabi-Yau fibrations.}}}
Stein degree is first introduced in the work \cite{B-moduli} by the first author,
which measures the number of connected components of general fibres of a projective morphism.
More specifically, let $S\to Z$ be a projective morphism between varieties, and
let $S\to V\to Z$ be the Stein factorisation.  
The \emph{Stein degree} (or \emph{S-degree} for short) of $S$ over $Z$ is defined to be 
\[ \sdeg (S/Z) := \deg (V/Z). \]
If $S\to Z$ is not surjective, $\sdeg (S/Z)$ is defined to be zero by convention.

One of the crucial ingredients in \cite{B-moduli} to establish the existence of moduli space of 
semi-log canonical (slc) stable minimal models (see \cite[\S 1]{B-moduli}) is the boundedness of Stein degree
of log canonical (lc) centres on log Calabi-Yau fibrations.  Recall that a \emph{log Calabi-Yau fibration}
$(X, B)\to Z$ consists of an lc pair $(X, B)$ and a contraction $X\to Z$ of varieties
such that $K_X + B \sim_{\RR} 0/Z$; see \cite[\S 1]{B-moduli}.

\begin{theorem}[\protect{\cite[Theorem 1.15]{B-moduli}}]\label{B-moduli-bnd-S-degree}
    Let $d\in \NN$.  Let $(X, B)\to Z$ be a log Calabi-Yau fibration of dimension $d=\dim X$.
    Then $\sdeg (I/Z)$ is bounded from above depending only on $d$ for every lc centre 
    $I$ of $(X, B)$.
\end{theorem}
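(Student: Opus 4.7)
The plan is to prove the bound by induction on $d = \dim X$. After passing to a $\QQ$-factorial dlt modification of $(X,B)$---which preserves $\sdeg(I/Z)$ for strict transforms of lc centres---every lc centre becomes a stratum of $\lfloor B\rfloor$, namely a connected component of some intersection $D_1\cap\cdots\cap D_k$ of components of $\lfloor B\rfloor$. The base case $d=1$ is immediate, since the only lc centres of a log Calabi-Yau curve are $X$ itself and closed points, all having $\sdeg\leq 1$.

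For the inductive step, suppose $I$ is a horizontal lc centre with $I\subsetneq X$. Then $I$ is contained in some horizontal divisorial lc centre $S\subset\lfloor B\rfloor$, because $I\to Z$ is surjective and $I\subset S$ forces $S$ to be horizontal. Dlt adjunction on $S$ produces a log Calabi-Yau fibration $(S^\nu, B_{S^\nu})\to Z$ of total dimension $d-1$ in which $I$ remains an lc centre, and the induction hypothesis gives $\sdeg(I/Z)\leq c(d-1)$. This reduces the task to bounding $\sdeg(S/Z)$ for a horizontal divisorial lc centre $S$ itself.

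The heart of the proof is therefore the divisorial case, and this is where the main difficulty lies. Write the Stein factorisation $S\to V\to Z$, so $\sdeg(S/Z)=\deg(V/Z)$. Applying the canonical bundle formula to the contraction $S^\nu\to V$ produces data $(V, B_V+M_V)$ satisfying $K_V+B_V+M_V\sim_{\QQ} 0/Z$; since $V\to Z$ is finite, this amounts to a log Calabi-Yau structure of relative dimension $0$, forcing $V\to Z$ to be essentially quasi-\'etale with ramification controlled by the moduli variation of the general fibre of $S^\nu\to V$. To turn this formal picture into a numerical bound on $\deg(V/Z)$ depending only on $d$, the plan is to invoke Birkar's boundedness of $N$-complements (yielding a bounded Cartier index on $K_X+B$ and on the derived CBF data), boundedness of Fano-type varieties (BAB), and finiteness of automorphism groups of klt log Calabi-Yau pairs in dimension at most $d-1$, so that the monodromy of $V\to Z$ is dominated by the orbit of a bounded automorphism group and is bounded in terms of $d$ alone. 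The hard part is precisely this effective step: while the Stein factorisation and CBF are formal, extracting from them a numerical bound on $\deg(V/Z)$ requires a careful interaction of complement theory, BAB-type boundedness, and automorphism bounds for log Calabi-Yau pairs, i.e.\ the full modern machinery of the MMP and effective birationality.
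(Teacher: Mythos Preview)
A preliminary note: Theorem~\ref{B-moduli-bnd-S-degree} is quoted from \cite{B-moduli} and not proved in this paper; what the paper does prove is the stronger $\mc{H}(d,t)$ (Theorem~\ref{dim-d-t}), whose case $t=1$ covers the divisorial lc centres, and the paper remarks that the reduction from arbitrary lc centres to divisorial ones is easy.

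Your reduction step contains a slip: $(S^\nu,B_{S^\nu})\to Z$ is not a log Calabi--Yau fibration, since $S^\nu\to Z$ is not a contraction---its general fibre being disconnected is precisely the quantity you are trying to bound. One must pass to the Stein factorisation $S^\nu\to V\to Z$, apply induction to the genuine log Calabi--Yau fibration $(S^\nu,B_{S^\nu})\to V$, and then multiply by $\deg(V/Z)=\sdeg(S/Z)$. So the sentence ``the induction hypothesis gives $\sdeg(I/Z)\le c(d-1)$'' is wrong as written, though your subsequent focus on $\sdeg(S/Z)$ suggests you sense this.

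The substantive gap is the divisorial bound itself. Your proposal---canonical bundle formula on $S^\nu\to V$, then bound $\deg(V/Z)$ via monodromy and automorphism groups of bounded log Calabi--Yau fibres---is a plan rather than a proof, and the mechanism is unclear: the relation $K_V+B_V+M_V\sim_\QQ 0/Z$ with $V\to Z$ finite carries no information on $\deg(V/Z)$ (unramified covers of arbitrary degree satisfy it), and the number of connected components of a general fibre of $S\to Z$ is not a monodromy invariant in any obvious sense. The paper's argument for the divisorial case (Theorems~\ref{Fano-type-lCY-fib} and~\ref{pf-dim-d-t}) runs along a completely different axis: reduce to a Fano type fibration, run a $(-S)$-MMP to a Mori fibre space, use bounded complements and BAB to force the general fibre into a bounded family, and then embed the fibration in a relatively bounded log smooth (or toroidal) family $(Y,D)\to Z$ in which $\centre_Y S$ is an lc centre---hence bounded because the ambient family is---with irreducible general fibres of $S\dashrightarrow\centre_Y S$ by \cite[Proposition~3.5]{birkar2024irrationality}. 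The bound on $\sdeg(S^{\nor}/Z)$ thus comes from geometric boundedness of the ambient fibration, not from any analysis of the finite map $V\to Z$.
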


Based on Theorem~\ref{B-moduli-bnd-S-degree}, the first author conjectures
a more general form of boundedness of Stein degree of divisors on 
log Calabi-Yau fibrations.

\begin{conjecture}[\protect{\cite[Conjecture 1.16]{B-moduli}}]\label{B-ori-conj}
    Let $d\in \NN$ and let $t\in \RR^{>0}$.
	Let $(X, B)\to Z$ be a log Calabi-Yau fibration 
	of dimension $d = \dim X$, and let $S$ be a horizontal$/Z$ irreducible component of $B$ 
	whose coefficient in $B$ is $\ge t$.
	Then $\sdeg (S/Z)$ is bounded from above depending only on $d,t$.
\end{conjecture}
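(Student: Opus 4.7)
The plan is to reduce Conjecture~\ref{B-ori-conj} to Theorem~\ref{B-moduli-bnd-S-degree}. Theorem~\ref{B-moduli-bnd-S-degree} bounds $\sdeg(I/Z)$ for every lc centre $I$, and the natural route is to construct, from the given data $(X, B)\to Z$ and the horizontal divisor $S$ of coefficient $\ge t$, a new log Calabi--Yau fibration $(X', B^+)\to Z$ in which the strict transform of $S$ appears as an lc centre. Once this is in place, Theorem~\ref{B-moduli-bnd-S-degree} gives $\sdeg(S/Z)\le C(\dim X')$, and the birational modifications used affect $\sdeg(S/Z)$ only up to constants depending on $d$ and $t$.

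As a first step, I would take a $\QQ$-factorial dlt modification of $(X, B)$ and work over the generic point of $Z$, reducing to the case where $(X, B)$ is $\QQ$-factorial dlt, $S$ is a prime component of coefficient $t_S \in [t, 1]$, and $\sdeg(S/Z)$ equals the number of geometric connected components of $S$ over the function field of $Z$. Next, I would consider the pair $(X, B + (1-t_S)S)$, in which $S$ has coefficient $1$, but which is not lc in general and only satisfies $K_X + B + (1-t_S)S \sim_{\RR} (1-t_S)S/Z$. The goal of the main step is then to correct this to a genuine log Calabi--Yau pair $(X', B^+)$ birational to $(X, B + (1-t_S)S)$ in which the strict transform of $S$ still has coefficient $1$. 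I expect this to be achievable by combining a dlt crepant modification of $(X, B + (1-t_S)S)$ (to restore lc-ness and extract any new lc places introduced by increasing the coefficient of $S$) with the theory of bounded complements for log Calabi--Yau fibrations: one seeks an $N$-complement $B^+ \ge B + (1-t_S)S$ on a suitable model $X'$ with $N = N(d, t)$ and $N(K_{X'} + B^+) \sim 0/Z$, in which the strict transform of $S$ is automatically an lc centre because its coefficient in $B^+$ equals $1$. Theorem~\ref{B-moduli-bnd-S-degree} applied to $(X', B^+)\to Z$ then yields $\sdeg(S/Z) \le C(\dim X') = C(d)$, and tracking the construction gives a bound in $d$ and $t$.

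The main obstacle is the complement construction. The standard effective complements theorem produces $B^+\ge B$ with bounded denominator $N$, but does not by itself guarantee $\coeff_S(B^+)=1$; we need the complement to respect the prescribed lc structure along $S$. The natural fix is to apply complements to $(X, B + (1-t_S)S)$, after first taking a dlt crepant modification so that the enlarged pair is lc, and then running an MMP to reach a model where the complement machinery applies. Two subtleties must be addressed: first, the index $N$ must depend only on $d$ and $t$, which relies on the fact that $t_S\ge t$ forces the coefficients of the enlarged boundary into a DCC set controlled by $t$; second, the connected-components count of $S$ must be preserved (or controlled from below) throughout the birational modifications, which follows from standard properties of Stein factorizations under contractions and birational maps. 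Ensuring both of these uniformly in the full generality of Conjecture~\ref{B-ori-conj}, and verifying that the auxiliary MMP terminates in the relevant dlt setting, is the technical heart of the argument.
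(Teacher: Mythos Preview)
Your proposal has a genuine gap at its core. The strategy is to promote $S$ to an lc centre by producing a new log Calabi--Yau boundary $B^+$ on a birational model $X'$ with $\coeff_{S'} B^+ = 1$, and then invoke Theorem~\ref{B-moduli-bnd-S-degree}. But the Calabi--Yau condition itself obstructs this. Suppose such $(X',B^+)$ exists with $K_{X'}+B^+\sim_{\RR}0/Z$ and $B^+\ge B'+(1-t_S)S'$, where $B'$ is the crepant boundary pulled back from $(X,B)$. Restricting to a general fibre $F'$ of $X'\to Z$, both $K_{F'}+B^+|_{F'}\sim_{\RR}0$ and $K_{F'}+B'|_{F'}\sim_{\RR}0$, so $B^+|_{F'}-B'|_{F'}$ is an effective divisor $\sim_{\RR}0$ on the projective variety $F'$, hence zero. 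This forces $(1-t_S)S'|_{F'}=0$, i.e.\ $t_S=1$. In short, once $(X,B)$ is already log Calabi--Yau over $Z$, there is no room to raise the coefficient of any horizontal component.

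Complement theory does not rescue this. Complements apply when $-(K_X+\Delta)$ is nef or effective over the base, but for $\Delta=B+(1-t_S)S$ one has $-(K_X+\Delta)\sim_{\RR}-(1-t_S)S/Z$, which is anti-effective since $S$ is horizontal. If instead you take complements of $(X,tS)$ (where $-(K_X+tS)\sim_{\RR}B-tS\ge 0/Z$), the output $B^+$ has bounded index but nothing forces $\coeff_S B^+=1$; indeed $B$ itself is such a complement with $\coeff_S B<1$.

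The paper's proof takes a completely different route and never tries to make $S$ an lc centre. It argues by induction on the relative dimension $d$: run a $(-S)$-MMP to reach a Mori fibre space, reduce to the Fano type case, apply bounded complements only to put the coefficients into a fixed finite set (not to force $\coeff_S=1$), then run a $K_X$-MMP to a Mori fibre space $\wt X\to\wt Z$. If $\dim(\wt Z/Z)\ge 1$ one uses the inductive hypothesis together with the vertical statement $\mc V(k,t)$; if $\wt Z=Z$, one uses BAB to bound the general fibre and then controls $\sdeg(S^{\nor}/Z)$ via relatively bounded families and toroidal geometry. This boundedness machinery is the genuine new input; the shortcut through Theorem~\ref{B-moduli-bnd-S-degree} that you propose is not available.
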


Note that Theorem~\ref{B-moduli-bnd-S-degree} is stated for lc centres $I$
which may not be divisors, but the theorem can easily be reduced to the case when $I$ is a divisor.  
Moreover, we can also ask a similar question when $S$ is vertical$/Z$ in Conjecture~\ref{B-ori-conj}
in which case we consider $\sdeg (S/T)$, where $T$ is the image of $S$ in $Z$; cf. \cite[page 8]{B-moduli}.
More generally, let $S\to Z$ be a projective morphism between varieties,
and let $T$ be the image of $S$ in $Z$.
We call the Stein degree $\sdeg (S/T)$ the \emph{strong Stein degree} (or \emph{SS-degree} for short)
of $S$ over $Z$, and we denote it by $\ssdeg (S/Z)$.

\mni
{\textbf{\sffamily{Main results $\mc{H}(d, t)$ and $\mc{V}(d, t)$.}}}
The main result of this paper is a stronger version of Conjecture~\ref{B-ori-conj} 
on boundedness of Stein degree of divisors on log Calabi-Yau fibrations.
Indeed, we show the boundedness of Stein degree in the context of generalised pairs.
A \emph{generalised log Calabi-Yau fibration} $(X, B + \mb{M}) \to Z$ consists of a generalised log canonical 
generalised pair $(X, B + \mb{M})$ and a contraction of varieties $X\to Z$ 
such that $K_X + B + \mb{M}_X \sim_{\RR}0/Z$; see \S \ref{preliminaries}.

\begin{theorem}[$\mc{H}(d,t)$]\label{dim-d-t}
    Let $d\in \NN$ and let $t\in (0,1]$.
	Let $(X, B + \mb{M})\to Z$ be a generalised log Calabi-Yau fibration 
	of relative dimension $d$, that is, a general fibre of $X\to Z$ has dimension $d$.
    Let $S$ be a horizontal$/Z$ irreducible component of $B$ 
	whose coefficient in $B$ is $\ge t$.
	Then $\sdeg (S^{\nor}/Z)$ is bounded from above depending only on $d,t$, 
    where $S^{\nor}$ is the normalisation of $S$.
\end{theorem}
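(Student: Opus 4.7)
The plan is to reduce the theorem to the case of an lc centre, already handled by Theorem~\ref{B-moduli-bnd-S-degree}, by producing a modified generalised log Calabi-Yau structure in which $S$ appears with coefficient one.

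First I would reduce to a statement about a general fibre of $X\to Z$. Since $\sdeg(S^{\nor}/Z)$ equals the number of geometric connected components of the fibre of $S^{\nor} \to Z$ over the geometric generic point, and this in turn equals the number of irreducible components of $S \cap F$ for $F$ a sufficiently general fibre of $X \to Z$, it suffices to bound that count. Each such component is a prime divisor of $F$ whose coefficient in $B_F$ is at least $t$, where $(F, B_F + \mathbf{M}_F)$ is the induced generalised log Calabi-Yau pair of dimension $d$. So the theorem is equivalent to a fibrewise statement: bound the number of prime components of $B_F$ with coefficient at least $t$ in a $d$-dimensional generalised log Calabi-Yau pair.

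Next, the key step is to promote the coefficient of $S$ from $\ge t$ to $1$, so that $S$ becomes an lc centre. The strategy is to construct an effective $\RR$-divisor $\Delta$ with $\Delta \ge B$ and $\coeff_S(\Delta) = 1$, such that $(X, \Delta + \mathbf{M})\to Z$ remains a generalised log Calabi-Yau fibration. I would do this by first running a suitable MMP on $K_X + (B - \epsilon S) + \mathbf{M}_X$ over $Z$ for small $\epsilon > 0$ to reach a birational model on which $S$ becomes a well-controlled extremal divisor, and then appealing to the theory of $n$-complements for generalised pairs (due to the first author and collaborators) to add a bounded effective divisor that raises the coefficient of $S$ to $1$ without destroying the generalised log Calabi-Yau condition. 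The complement index $n$ depends only on $d$ and the DCC coefficient set $\{t,1\}$, hence only on $d$ and $t$. Once $\Delta$ is constructed, the generalised pair analogue of Theorem~\ref{B-moduli-bnd-S-degree}, which one expects to establish in parallel by using the canonical bundle formula to handle the nef part $\mathbf{M}$, bounds $\sdeg(S^{\nor}/Z)$ by a constant depending only on $d$. Combining these bounds yields the desired $N(d,t)$.

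The hard part will be the construction of $\Delta$: one must simultaneously fix $\coeff_S(\Delta) = 1$, keep $(X, \Delta + \mathbf{M})$ lc, preserve the horizontality of $S$ over $Z$, and maintain the relative log Calabi-Yau property. This calls for a careful application of the relative MMP for generalised pairs combined with the boundedness of complements, and uniform control of the complement index in terms of $d$ and $t$ is where the heavy machinery genuinely enters. A secondary technical issue is the transfer of Theorem~\ref{B-moduli-bnd-S-degree} from the classical to the generalised pair setting, which requires tracking the nef part $\mathbf{M}$ coherently through the various birational modifications and restrictions.
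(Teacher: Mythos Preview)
Your fibrewise reformulation is incorrect. You claim the theorem is equivalent to bounding, on a $d$-dimensional generalised log Calabi-Yau pair $(F, B_F + \mathbf{M}_F)$, the number of prime components of $B_F$ with coefficient $\ge t$. That statement is false already for $d=2$, $t=1$: the pairs $(Y_n, L_n)$ of Example~\ref{local-counter} are log Calabi-Yau surfaces with $L_n$ reduced of $n+3$ components, and the product fibration $(Y_n\times C,\, L_n\times C)\to C$ over any smooth curve $C$ realises $(Y_n,L_n)$ as the general fibre. What the theorem actually asserts is a bound on the components of $S_F$ for a \emph{single} irreducible horizontal divisor $S$; the global irreducibility of $S$ is precisely the constraint that makes the bound hold (see the discussion following Example~\ref{local-counter}), and it is lost once you pass entirely to the fibre.

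Even setting that aside, your second step has a gap: boundedness of complements produces an $n$-complement $B^+\ge tS$ with coefficients in $\{m/n : 1\le m\le n\}$, not $\coeff_S B^+=1$; forcing coefficient $1$ would require $(X',S')$ to be lc on your MMP model, i.e.\ an lct bound that is not available in this generality. The paper does not reduce to Theorem~\ref{B-moduli-bnd-S-degree} at all. After cutting the base to a curve and reducing to the $\QQ$-factorial g-klt case, it runs a $(-S)$-MMP and takes complements only to reach a Fano type fibration with coefficients in a fixed finite set, then runs a further $K$-MMP to a Mori fibre space $\wt X\to\wt Z$ over $Z$ and argues by cases on $\dim(\wt Z/Z)$. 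The case $\dim(\wt Z/Z)\ge 1$ uses the induction hypothesis together with a companion \emph{vertical} Stein-degree statement $\mc V(d,t)$ (Theorem~\ref{vertical-bnd-intro}), whose proof interweaves with that of $\mc H(d,t)$; the case $\wt Z=Z$ invokes BAB boundedness of $\epsilon$-lc Fanos and a relatively-bounded-family argument (with toroidal input) to exhibit the centre of $S$ as an lc centre of a bounded log smooth model over $Z$.
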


Conjecture~\ref{B-ori-conj} is derived immediately by setting 
$\mb{M} = 0$ in Theorem~\ref{dim-d-t}.
Note that the conclusion in Theorem~\ref{dim-d-t} is stronger than Conjecture~\ref{B-ori-conj}
as $\sdeg (S^{\nor}/Z)$ is equal to the number of irreducible components of a general fibre of $S\to Z$,
while $\sdeg (S/Z)$ is equal to the number of connected components of a general fibre of $S\to Z$.

\medskip

On the other hand, when $S$ is vertical$/Z$, we show the boundedness of $\ssdeg (S^{\nor}/Z)$
by assuming that the image of $S$ in $Z$ is a divisor and that
$(X, B + \mb{M})\to Z$ is a \emph{Fano type generalised log Calabi-Yau fibration} 
(or a \emph{Fano type fibration} for short), that is, $(X, B + \mb{M})\to Z$
is a generalised log Calabi-Yau fibration and $X$ is of Fano type over $Z$.
Recall that a contraction of varieties $X\to Z$ is called \emph{of Fano type} 
if there exists a big$/Z$ $\RR$-divisor $\Gamma$ such that $(X, \Gamma)$ is Kawamata log terminal (klt)
and $K_X + \Gamma \sim_{\RR}0/Z$; see \S\ref{fano-type-defn}.

\begin{theorem}[$\mc{V}(d, t)$]\label{vertical-bnd-intro}
    Let $d\in \NN$ and let $t \in (0,1]$.  Let 
    $(X, B + \mb{M})\to Z$ 
    be a Fano type generalised log Calabi-Yau fibration
    of relative dimension $d$.  
    Let $S$ be a vertical$/Z$ irreducible component of $B$ such that
    \begin{itemize} 
        \item [\emph{(1)}] the coefficient of $S$ in $B$ is $\ge t$, and
        \item [\emph{(2)}] the image of $S$ in $Z$ has codimension one in $Z$.
    \end{itemize}
    Then $\ssdeg (S^{\nor}/Z)$ is bounded from above depending only on $d, t$,
    where $S^{\nor}$ is the normalisation of $S$.
\end{theorem}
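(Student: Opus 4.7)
The strategy is to reduce the vertical case $\mc V(d,t)$ to the horizontal case $\mc H(d,\tau)$ of Theorem~\ref{dim-d-t} at a new threshold $\tau=\tau(d,t)>0$ extracted from the canonical bundle formula on the base and the Fano type assumption.

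First, I would apply the generalised canonical bundle formula to the contraction $f\colon X\to Z$, obtaining a generalised lc pair $(Z,B_Z+\mb N)$ with $K_X+B+\mb M_X\sim_\RR f^*(K_Z+B_Z+\mb N_Z)$. The coefficient of $T$ in $B_Z$ equals $1-c$, where $c$ is the generalised lc threshold of $(X,B+\mb M)$ with respect to $f^*T$ over the generic point of $T$. Writing $\mu_S\ge t$ for the coefficient of $S$ in $B$ and $m$ for the multiplicity of $S$ in $f^*T$, the local computation at a general point of $S$ gives $c\le (1-\mu_S)/m\le (1-t)/m$, so $\mult_T B_Z\ge 1-(1-t)/m$. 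Now the Fano type assumption enters: localising at the generic point of $T$ produces a Fano type generalised log Calabi-Yau fibration over the DVR $\OO_{Z,\eta_T}$, and Birkar's boundedness of complements for such fibrations forces $m\le M(d)$ for some $M(d)$ depending only on $d$. Hence $\mult_T B_Z\ge 1-(1-t)/M(d)=:\tau(d,t)>0$.

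The decisive step is to convert this lower bound $\mult_T B_Z\ge \tau$ into a bound on $\sdeg(S^\nor/T)$ via $\mc H(d,\tau)$. For this I would construct an auxiliary Fano type generalised log Calabi-Yau fibration $(Y,B_Y+\mb P)\to W$ of relative dimension $d$ together with a horizontal$/W$ irreducible component $S_Y$ of $B_Y$ of coefficient $\ge \tau$, arranged so that $\sdeg(S_Y^\nor/W)\ge \sdeg(S^\nor/T)$. A natural candidate is to pass to a cyclic cover $W\to Z$ suitably ramified along $T$ (or to the normalised blowup of $Z$ along $T$), and to let $Y$ be a suitable crepant model of $X\times_Z W$; after taking the strict transform, $S$ becomes a horizontal divisor over $W$ inheriting the coefficient $\ge \tau$ from $B_Z$. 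Then Theorem~\ref{dim-d-t} applied to $(Y,B_Y+\mb P)/W$ gives the required bound.

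The hardest part of the argument will be this last construction: producing the auxiliary fibration so that $S$ becomes horizontal while simultaneously preserving Fano type, the relative dimension $d$, and the coefficient lower bound $\tau$, and so that the Stein degree of $S_Y$ over $W$ genuinely controls $\sdeg(S^\nor/T)$. Naive approaches (direct étale base change, product with $\mbb P^1$, or fibrewise adjunction alone) destroy one or more of these properties, so the auxiliary fibration has to be chosen carefully; this is where the bulk of the technical work will lie.
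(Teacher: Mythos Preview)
Your proposal has a genuine gap at the ``decisive step.'' The suggested constructions for the auxiliary fibration do not make $S$ horizontal. A cyclic cover $W\to Z$ ramified along $T$ satisfies $\dim W=\dim Z$, and the preimage of $T$ in $W$ is again a proper closed subset; the pullback of $S$ to $X\times_Z W$ (or any crepant model thereof) still maps into this preimage, hence remains vertical over $W$. Blowing up $Z$ along $T$ is an isomorphism since $T$ already has codimension one. More fundamentally, no finite or birational base change of $Z$ can turn a vertical divisor into a horizontal one: horizontality of $S_Y$ over $W$ requires the image of $S_Y$ to be all of $W$, and base change only reparametrises the same image $T$. The first half of your argument does yield $\mult_T B_Z\ge t$ (the detour through bounding $m$ is unnecessary, since $m\ge 1$ already gives $1-c\ge t$), but this is information about a divisor on the base $Z$, not about $\sdeg(S^{\nor}/T)$, and you provide no mechanism linking the two.

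There is also a logical ordering issue. In the paper $\mc V(d,t)$ is proved \emph{before} $\mc H(d,t)$ and is an input to it; see the path $(\star)$ in the introduction. Invoking Theorem~\ref{dim-d-t} inside the proof of $\mc V(d,t)$ is therefore circular as the paper is organised. One could in principle attempt to reverse the order, since the paper's proof of $\mc H(d,t)$ only consumes $\mc V(k,t)$ for $k\le d-1$, but you would then need to supply an independent proof of $\mc H(d,\tau)$ first.

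The paper's actual argument is entirely different. It inducts on $d$ within $\mc V$ itself, with $\mc V(1,t)$ handled separately via Theorem~\ref{fir-one-u-t}. One runs a $(-S)$-MMP so that the transform $S'$ supports the whole fibre over $T$, takes a bounded $n$-complement, and then runs a $K$-MMP to a Mori fibre space $\wt X\to\wt Z$ over $Z$. If $\dim(\wt Z/Z)\ge 1$, the canonical bundle formula produces a tower of two Fano type fibrations of strictly smaller relative dimension, and the inductive hypothesis $\mc V(k,t)$ for $k<d$ applies. The hard case is $\dim(\wt Z/Z)=0$: then $-K_{\wt X}$ is ample$/Z$, general fibres are $\epsilon$-lc Fano and hence bounded, and one passes to a relatively bounded toroidal model via the machinery of \cite{birkar2024irrationality}; the centre of $S$ becomes an lc centre of this toroidal couple, and the number of such centres in a general fibre over $T$ is bounded using constancy of degree in flat families together with \cite[Theorem~1.1]{HMX14}.
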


In a forthcoming paper, we give examples showing that 
the strong Stein degree is unbounded if $X\to Z$ is not of Fano type in
Theorem~\ref{vertical-bnd-intro}, even for $d=t=1$.

Let $(X, B + \mb M)\to Z$ be a generalised log Calabi-Yau fibration
(respectively, a Fano type fibration).
Then Theorem~\ref{dim-d-t} (respectively, Theorem~\ref{vertical-bnd-intro}) can also be stated 
for prime divisors $S$ over $X$ with generalised log discrepancy $a(S, X, B + \mb M) \le 1-t$.  
We can extract such $S$, then the situation is reduced to the case when $S$ is a divisor on $X$.

The proofs of $\mc{H}(d, t)$ and $\mc{V}(d, t)$ are proceeded by the Minimal Model Program \cite{BCHM, B-Zhang},
boundedness of complements \cite{B-Fano}, boundedness of $\epsilon$-lc Fano varieties \cite{B-BAB},
toroidal geometry methods \cite{birkar2024irrationality}, and interweaving inductions
on the relative dimension $d$ in $\mc{H}(d, t)$ and $\mc{V}(d, t)$.
In particular, Theorem~\ref{vertical-bnd-intro} is one of the crucial ingredients
in the proof of Theorem~\ref{dim-d-t}.


\mni
{\textbf{\sffamily{Relative $\RR$-linear system of Fano fibrations.}}}
Log Calabi-Yau fibrations arise naturally in the study 
of Fano fibrations.  Recall that a contraction $X\to Z$ of normal varieties
is called a \emph{Fano fibration} if $X$ has klt singularities and $-K_X$ is ample$/Z$.
Let $\abs{-K_X}_{/Z, \RR}$ be the relative $\RR$-linear system of the ample$/Z$ divisor $-K_X$, that is,
the set of all effective $\RR$-divisors that are $\RR$-linearly equivalent to $-K_X$ over $Z$.
Pick $B\in \abs{-K_X}_{/Z, \RR}$ such that $(X, B)$ is an lc pair, 
then $(X, B)\to Z$ is a log Calabi-Yau fibration.

Let $X\to Z$ be a Fano fibration of relative dimension $d$.  If $X$ is $\epsilon$-lc 
over the generic point of $Z$ for some fixed $\epsilon > 0$, then a general fibre $F$ of $X\to Z$ 
is an $\epsilon$-lc Fano variety of dimension $d$.
By \cite[Theorem 1.1]{B-BAB}, $F$ belongs to a bounded family of projective varieties.
Denote by $\mc{F}(d, \epsilon)$ the set of all log Calabi-Yau fibrations $(X, B)\to Z$
of relative dimension $d$ such that 
$X\to Z$ is a Fano fibration and $X$ is $\epsilon$-lc over the generic point of $Z$.
Denote by $\mc{F}(d)$ the union of the sets $\mc{F}(d, \epsilon)$ for all $\epsilon > 0$.

For an arbitrary fixed $\epsilon>0$, the set of general fibres of 
the fibrations $X\to Z$ in $\mc{F}(d, \epsilon)$
form a bounded family.  However, it is well-known that the set of general fibres
of fibrations in $\mc{F}(d)$ is not bounded even for $d=2$; see \cite[Example 1.2]{B-BAB}.

Let $t\in (0,1]$.  Pick an arbitrary $(X, B)\to Z$ in $\mc{F}(d)$,
and assume that $S$ is a horizontal$/Z$ irreducible component of $B$ 
whose coefficient in $B$ is $\ge t$.
Then $B$ belongs to the relative $\RR$-linear system $\abs{-K_X}_{/Z, \RR}$.
Now Theorem~\ref{dim-d-t} demonstrates the phenomenon that the number of irreducible components of general fibres of $S\to Z$ is bounded
although $\mc{F}(d)$ is not a relatively bounded family of fibrations; cf. \S\ref{r-bnd-def}.


\mni
{\textbf{\sffamily{Local and global geometry of log Calabi-Yau fibrations.}}}
Let $(X, B)\to Z$ be a log Calabi-Yau fibration with $\dim X = 3$ and $\dim Z = 1$,
and let $S$ be a horizontal$/Z$ irreducible component of $B$ whose coefficient in $B$ is $\ge t$.
Denote by $F$ a general fibre of $X\to Z$.
Denote by $B_F$ (respectively, by $S_F$) the restriction of $B$ to $F$ (respectively, of $S$ to $F$).  
We can write
\[ K_F + B_F \sim_{\RR} 0. \]
Then $\sdeg (S^{\nor}/Z)$ is equal to the number of irreducible components of $S_F$.

Let $\mc{C}_2$ be the set of log Calabi-Yau projective couples of dimension two,
that is, $\mc{C}_2$ consists of lc pairs $(Y, D)$, where $Y$
is a normal projective surface and
$D$ is a reduced divisor on $Y$ such that $K_Y + D \sim_{\RR} 0$.
Let $\Lambda$ be the set of integers 
\[ \Lambda := \Set{ n\in \NN \mid n\text{ is the number of irreducible components of }D \text{ for some }(Y, D)\in \mc{C}_2 }. \]
It turns out that $\Lambda$ is not a bounded set of integers
as shown in the following example.

\begin{exa}\label{local-counter}
    Let $Y_0 := \PP^2$.  Let $L_0$ be a transversal union of three lines in $Y$, 
    which is a reduced simple normal crossing (or \emph{snc} for short) divisor on $Y_0$.  Now assume that 
    $(Y_i, L_i)$ is already defined, where $Y_i$ is nonsingular and $L_i$
    is a reduced snc divisor with $i+3$ irreducible components.
    Let $Y_{i+1}$ be the blowing up of $Y_i$ at a singular point of $L_i$,
    and let $L_{i+1}$ be the sum of the birational transform of $L_i$
    and the reduced exceptional divisor of $Y_{i+1}\to Y_i$.  Then $(Y_{i+1}, L_{i+1})$
    is a log Calabi-Yau couple of dimension two. 
    Inductively, for every $n\in \NN$, the log Calabi-Yau couple $(Y_n, L_n)\in \mc{C}_2$
    is defined, where $L_n$ is a cycle of $n+3$ nonsingular rational curves.
\end{exa}

Now let $(X, B)\to Z$ be a log Calabi-Yau fibration with $\dim X = 3$ and $\dim Z = 1$, 
let $S$ be an irreducible component of $B$ whose coefficient in $B$ is $\ge t$,
and let $F$ be a general fibre of $X\to Z$ as above.
By Theorem~\ref{dim-d-t}, there is an $r\in \NN$ depending only on $t$ such that 
\[ \mf{n}(S_F) = \sdeg (S^{\nor}/Z)\le r, \]
where $\mf{n}(S_F)$ denotes the number of irreducible components of $S_F$; cf. \S\ref{pre-stein-deg-divisors}.
Let $(Y_n, L_n)\in \mc{C}_2$ be a couple constructed in Example~\ref{local-counter} with $n > r$.
Then the divisor $L_n$ can not be realised as a general fibre
of an irreducible component $S$ of the boundary $B$ of a three-dimensional
log Calabi-Yau fibration $(X/Z, B)$ over a smooth curve; otherwise, if 
$L_n = S_F$ for such an $S$ as above, then $L_n$
has at most $r$ irreducible components, which contradicts the construction of $L_n$ as $n > r$.
This reveals the fact that certain boundedness for log Calabi-Yau
fibrations is dominated by global geometry of the fibrations, 
and it can not be detected by local geometry of the general fibres, which are log Calabi-Yau pairs of lower dimension.


\mni
{\textbf{\sffamily{Structure of the paper.}}}
Let $d\in \NN$ and let $t\in (0,1]$.  The proofs of Theorem~\ref{dim-d-t} and Theorem~\ref{vertical-bnd-intro}
proceed along the path
\begin{equation}
    \mc{H}(1, t) \Longrightarrow \mc{V}(1, t) \Longrightarrow \mc{V} (d, t) \Longrightarrow \mc{H} (d, t). \label{path} \tag{$\star$}
\end{equation}
The rest of this paper is organised along this path and is divided into the following sections.

Section~\ref{preliminaries} clarifies the notation and conventions in this paper,
such as divisors, singularities of pairs, generalised pairs, relatively bounded family 
of projective couples, and some preliminary results that will be used in subsequent sections.

Section~\ref{toroidal-section} provides the necessary materials on toroidal geometry to be used in this paper.

Section~\ref{threefolds-rel-dim-one} gives the proofs of $\mc{H}(1, t)$ and $\mc{V}(1,t)$
in \S\ref{pf-S-1-t} and \S\ref{pf-V-1-t} respectively.  In particular,
\S\ref{pf-pre-V-1-t} presents a result (Theorem~\ref{fir-one-u-t}) that is more general than $\mc{V}(1, t)$.  

Section~\ref{pf-V-d-t} gives the proof of $\mc{V}(d,t)$.
The proof proceeds by induction on the relative dimension $d$.
Note that $\mc{V}(1,t)$ holds as proved in \S\ref{pf-V-1-t}.
Then the proof of $\mc{V}(d,t)$ relies on the proof of the main result of \cite{birkar2024irrationality}.

Section~\ref{pf-main-results} gives the proof of $\mc{H}(d, t)$,
which is also accomplished by induction on the relative dimension $d$.
The first step of the induction $\mc{H}(1,t)$ is established in \S\ref{pf-S-1-t},
then $\mc{V}(k, t)$ for $1\le k\le d-1$ is involved as a part of the inductive process.


\mni
{\textbf{\sffamily{Sketch of some proofs.}}}
We sketch the main ideas in the proofs of $\mc{H}(d,t)$ and $\mc{V}(d,t)$.
An easy observation is that the number of irreducible components of a general fibre is birationally invariant.
More precisely, let $X\to Z$ be a contraction of varieties, and let $S$ be a horizontal$/Z$ prime divisor on $X$.
Assume that $X\dashrightarrow X'$ is a birational map over $Z$ such that
the centre $S'$ of $S$ on $X'$ is also a prime divisor, then we have (cf. \S\ref{pre-stein-deg-divisors})
\[ \sdeg (S^{\nor}/Z) = \sdeg \big( (S')^{\nor} /Z\big). \]
This enables us to modify $X$ to some other birational model $X'$ on which certain boundedness 
results can be applied to deduce the boundedness of Stein degree.  

Now we explain how to prove $\mc{H}(d, t)$ and $\mc{V}(d,t)$ as in \eqref{path}.
Pick an arbitrary generalised log Calabi-Yau fibration $(X, B + \mb{M})\to Z$
satisfying all the assumptions in Theorem~\ref{dim-d-t}.  To prove 
$\mc{H}(d, t)$ for $(X/Z, B + \mb{M})$,
it is easy to see that we can assume the base variety $Z$ is a nonsingular curve; see Lemma~\ref{red-to-base-curve}. 
Moreover, decreasing $t$ if necessary, we can assume that $t\in \QQ^{>0}$. 

\medskip


\emph{Step 1} $(\mc{H}(1,t) \Rightarrow \mc{V}(1,t))$.
The statement $\mc{H}(1, t)$ can be easily proved as a general fibre of $X\to Z$ 
is isomorphic to $\PP^1$; see Lemma~\ref{base-case-r-dim=1}.
For $\mc{V}(1, t)$, let $(X/Z, B + \mb{M})$ be a Fano type fibration 
of relative dimension one with a vertical$/Z$ prime divisor $S$ 
satisfying all the assumptions in Theorem~\ref{vertical-bnd-intro}.
By Bertini's theorems, we can assume that
$\dim X = 3$, $\dim Z = 2$ and $\dim T = 1$, where $T$ is the image of $S$ in $Z$.

By running a sequence of MMPs over $Z$ and by applying the boundedness of complements \cite[Theorem 1.8]{B-Fano},
the problem is reduced to consider an lc pair $(X', C')$ such that
\begin{itemize}
    \item [(1)] there is a rational map $X\dashrightarrow X'$ over $Z$ (which possibly extracts some divisors),
    \item [(2)] the centre $S'$ of $S$ on $X'$ is a horizontal$/T$ prime divisor,
    \item [(3)] there is a horizontal$/Z$ component $H'$ of $C'$ with coefficient $\ge u$ for some $u\in (0, 1]$ depending only on $t$, and
    \item [(4)] every irreducible component of a general fibre of $S'\to T$ intersects $H'$.
\end{itemize}
By $\mc{H}(1, t)$, the number of irreducible components
of a general fibre of $H'\to Z$ is bounded from above depending only on $t$.
Let $z\in Z$ be a general closed point of $T$.  Since $H'\to Z$ is generically finite,
and since $Z$ is normal, shrinking $Z$ near the generic point $\eta_T$ of $T$, 
we can assume that $H'\to Z$ is a finite morphism; see Lemma~\ref{finite-fibre}.
Thus, the number of irreducible components of the fibre $H'_z$ is $\le \sdeg \big( (H')^{\nor}/Z \big)$.

The reduction $\red (H_z')$ consists of finitely many closed points $x_1, \dots, x_r$, 
where $r$ is bounded from above by $\sdeg \big( (H')^{\nor}/Z \big)$.  
By (4), every irreducible component of $S_z'$ contains at least one of the $x_i$'s.
However, for a fixed $x_i$, there can not be too many irreducible components of $S_z'$
passing through $x_i$ as otherwise $(X', C')$ is not lc at $x_i$.
Indeed, by Lemma~\ref{surface-comp-bnd}, for every $x_i$, the number of irreducible components of $S_z'$
passing through $x_i$ is bounded from above depending only on $t$.  
Then we can conclude that $\mc{V}(1, t)$ holds. 

\medskip


\emph{Step 2} $(\mc{V}(1,t)\Rightarrow \mc{V}(d, t))$.  
As $\mc{V}(1,t)$ holds, we assume that $\mc{V}(k, t)$ holds for every $1\le k\le d-1$ and then prove
$\mc{V}(d, t)$.  Part of the proof of this step relies on
the proof of the main result of \cite{birkar2024irrationality}.
From the technical point of view, the Fano type property of $X\to Z$
is essential in the proof as it allows us to run MMP$/Z$ on any $\RR$-divisor \cite{BCHM},
and it is preserved by MMP$/Z$ and by the inductive process; see \S\ref{fano-type-defn}.

Let $(X, B + \mb{M})\to Z$ be a Fano type (generalised log Calabi-Yau) fibration
of relative dimension $d$ that satisfies all the assumptions in Theorem~\ref{vertical-bnd-intro}.
By Bertini's theorems, we can assume that $\dim Z = 2$ and $\dim T = 1$,
where $T$ is the image of $S$ in $Z$.
Running MMP$/Z$ on $-S$ and applying boundedness of complements \cite[Theorem 1.8]{B-Fano},
we arrive at an lc pair $(X', C')$ such that
\begin{itemize}
    \item $(X', C')$ is $\QQ$-factorial and dlt,
    \item $X'$ is of Fano type over $Z$,
    \item $K_{X'} + C' \sim_{\QQ} 0/Z$,
    \item the coefficients of $C'$ belong to the set of rational numbers $\mb{T}_n := \Set{m/n}_{1\le m\le n}$ for some $n\in \NN$ depending only on $d,t$, 
    \item the centre $S'$ of $S$ on $X'$ supports on the whole fibre of $X'\to Z$ over $T$, and
    \item $\frac{t}{2} S' \le C'$.
\end{itemize}
By ACC of lc thresholds, we can assume that $(X', 0)$ is $\epsilon$-lc over $Z\setminus T$
for some $\epsilon > 0$ depending only on $d,n$ (whence only on $d,t$).
Run $\phi\colon X'\dashrightarrow X''$
a $K_{X'}$-MMP over $Z$ that ends with a Mori fibre space $g\colon X''\to Z''$ over $Z$.
Note that $S'$ is not contracted by $\phi$ as it supports on the whole fibre of $X'\to Z$ over $T$.
\[\xymatrix{
X\ar[d]\ar@{-->}[r] & X'\ar@{-->}[r]^{\phi} & X''\ar[d]^g \\
Z & & Z''\ar[ll]
}\]
The rest of the proof of $\mc{V}(d, t)$ is divided into two cases depending
on the relative dimension $\dim (Z''/Z)$ of the contraction $Z''\to Z$. 

\medskip

\emph{Case 1}.
If $\dim (Z''/Z) \ge 1$, then by adjunction for fibre spaces (see \cite[\S3.4]{B-Fano}), we can write
\[ K_{X''} + C'' \sim_{\QQ} g^*(K_{Z''} + C_{Z''} + \mb{N}_{Z''}), \]
where $C'' := \phi_* C'$ and $(Z'', C_{Z''} + \mb{N})$ is also a g-lc g-pair.
Note that both the contractions $X''\to Z''$ and $Z''\to Z$ are of Fano type; see \S\ref{fano-type-defn}.
Let $T''$ be the image of $S'' := \phi_* S'$ on $Z''$.
Then the coefficient of $T''$ in $C_{Z''}$ is also $\ge t/2$ by \cite[Lemma 3.7]{B-Fano}.
Applying $\mc{V}(k, t)$ for $1\le k\le d-1$ to the Fano type fibrations
\[ (X'', C'') \to Z'' \text{ and } (Z'', C_{Z''} + \mb{N}) \to Z, \]
we can conclude that $\ssdeg (S^{\nor}/Z) = \ssdeg \big( (S'')^{\nor}/Z \big)$
is bounded from above depending only on $d,t$; cf. Lemma~\ref{mul-g-fib}. 

\medskip

\emph{Case 2}.
If $\dim (Z''/Z) = 0$, we can assume that $Z''\to Z$ is an isomorphism by shrinking
$Z$ near the generic point $\eta_T$ of $T$.  In particular, $-K_{X''}$ is ample$/Z$,
hence the general fibres of $X''\to Z$ are bounded by \cite[Theorem 1.1]{B-BAB}.
By the construction in \cite{birkar2024irrationality}, there is a commutative diagram
\[\xymatrix{
(U_{Y'}\subset Y')\ar[d]^{\mu'}\ar[r]^-{m_Y} & Y\ar[d]^{\mu} & X''\ar@{-->}[l]_-{\varphi}\ar[d]^g \\
(U_Z\subset Z)\ar[r]^-{m_Z} & Z\ar@{=}[r] & Z
}\]
such that
\begin{itemize}
    \item $\varphi$ is a birational map that does not contract any horizontal$/Z$ divisors,
    \item $\mu\colon Y\to Z$ belongs to a relatively bounded family of varieties; see \S\ref{r-bnd-def},
    \item $m_Y$ is a birational projective morphism,
    \item $m_Z$ is an isomorphism (up to shrinking $Z$ near $\eta_T$),
    \item $\mu'\colon (U_{Y'}\subset Y')\to (U_Z\subset Z)$ is a toroidal morphism of strict toroidal embeddings; see \S\ref{toroidal-section},
    \item $(Y', D')\to Z$ is relatively bounded over an open subset of $Z$, where $D'$ is the reduced divisor that is the reduction of $Y'\setminus U_{Y'}$,
    \item there is an ample$/Z$, base point free, Cartier divisor $A$ on $Y'$ such that $\deg_{A/Z} A$ (see \S\ref{r-bnd-def}) is bounded from above depending only on $d,t$, and
    \item $K_{Y'} + D' + A$ is ample$/Z$.
\end{itemize}
Denote by $C$ the centre of $S$ on $Y'$.  Then $C$ is an lc centre of the strict toroidal couple $(Y', D')$.
Moreover, the general fibre of $S\dashrightarrow C$ is irreducible by \cite[Proposition 3.5]{birkar2024irrationality}.
Thus, it suffices to show that the number of irreducible components of a general fibre of $C\to T$ is bounded from above.
The proof of this fact consists of two parts.

(i). Pick a general closed point $z\in T$.
As $Z$ is nonsingular at $\eta_T$, we can assume that $\mu'$ is flat, hence
$\mu'\colon Y'\to Z$ is a well-defined family of algebraic cycles by \cite[Corollary I.3.15]{Kol-rc}.
Then the constancy of degree with respect to $A$ (see \cite[Proposition I.3.12]{Kol-rc})
shows that the number of irreducible components of the fibre $Y_z'$ is bounded from above 
depending only on $d,t$.

(ii). Let $V'$ be an arbitrary irreducible component of $C_z$,
and let $G'$ be an irreducible component of $Y_z'$ containing $V'$.
Let $G$ be the normalisation of $G'$.  Applying adjunction, we can write
\[ K_G + D_G \sim_{\QQ} (K_{Y'} + D' + A + Y_L')|_G, \]
where $Y_L' := L\times_Z Y'$ is the pullback of a general hyperplane section $L$ (containing $z\in T$) of $Z$ to $Y'$.
As $K_G + D_G$ is an ample Cartier divisor with bounded volume,
then \cite[Theorem 1.1]{HMX14} shows that $(G, D_G)$ belongs to a bounded family of pairs.
Furthermore, adjunction shows that $(G, D_G)$ admits an lc centre $V$ mapping onto $V'$.
Thus, the number of irreducible components of $C_z$ contained in $G'$ is bounded from above.

Now combining (i) and (ii), we see that the number of irreducible components of 
a general fibre of $C\to T$ is bounded from above depending only on $d,t$. 

\medskip

\emph{Step 3} $(\mc{V}(d,t)\Rightarrow \mc{H}(d, t))$.
Assuming $\mc{H}(k, t)$ holds for every $1\le k\le d-1$,
we show that $\mc{H}(d, t)$ holds.
Indeed, it suffices to show that $\mc{H}(d, t)$ holds for Fano type fibrations
$(X, B + \mb{M})\to Z$; see \S\ref{main-proof-S-d-t}.
Running MMP$/Z$ on $-S$, applying induction hypothesis, and taking the $n$-complement
by \cite[Theorem 1.8]{B-Fano} for some $n\in \NN$ depending only on $d,t$, we are reduced to consider
a pair $(X'/Z, C')$ such that
\begin{itemize}
    \item $(X', C')$ is a $\QQ$-factorial dlt pair,
    \item $X'$ is of Fano type over $Z$,
    \item $n(K_{X'} + C')\sim 0/Z$, and
    \item $t S' \le C'$,
\end{itemize}
where $S'$ is the centre of $S$ on $X'$.
Note that $S'$ is horizontal$/Z$.
Shrinking $Z$ to an open subset, we can assume that $(X', 0)$ is $\epsilon$-lc
for some $\epsilon>0$ depending only $d,n$ (hence depending only on $d,t$).
Running $\psi\colon X'\dashrightarrow X''$ a $K_{X'}$-MMP over $Z$, 
we get a Mori fibre space $f''\colon X''\to Z''$ over $Z$.
\[\xymatrix{
X\ar@{-->}[r]\ar[d] & X'\ar@{-->}[r]^{\psi} & X''\ar[d]^{f''} \\
Z &  & Z''\ar[ll]
}\]
The rest of the proof is divided into two cases depending on $\dim (Z''/Z)$. 

\medskip

\emph{Case 1}.
If $\dim (Z''/Z)\ge 1$, it is natural to apply induction to the contractions $X''\to Z''$
and $Z''\to Z$.  However, there are several possibilities for the centre $S''$ of $S'$ on $X''$.
Note that $S''$ may not be a divisor on $X''$.
Denote by $C''$ the pushdown of $C'$ to $X''$.
Replacing $(X'', C'')$ by a pair extracting only the valuation of $S$,
we can assume that $S''$ is a prime divisor on $X''$.
The Fano type property of $X''\to Z$ is preserved in this process by Lemma~\ref{fano-type-at-dlt-model}.

(i). If $S''$ is horizontal$/Z''$, we can apply induction and Lemma~\ref{mul-g-fib} to conclude 
the boundedness of $\sdeg (S^{\nor}/Z)$.

(ii). If $S''$ is vertical$/Z''$, we run $\upsilon\colon X''\dashrightarrow X'''$ a $(-S'')$-MMP over $Z''$
that ends with a good minimal model of $-S''$ (over $Z''$).  Let $S''' := \upsilon_* S''$, and
let $X'''\to Z'''$ be the contraction induced by $-S'''$.  Then
the image $T'''$ of $S'''$ is a prime divisor on $Z'''$.
Now we can apply $\mc{V}(k, t)$ to $X'''\to Z'''$ with $k = \dim (X'''/Z''')$
and apply $\mc{H}(\ell, t)$ to $Z'''\to Z$ with $\ell = \dim (Z'''/Z)$
to conclude the boundedness of $\sdeg (S^{\nor}/Z)$. 

\medskip

\emph{Case 2}.
If $\dim (Z''/Z) = 0$, then $Z''\to Z$ is an isomorphism of nonsingular curves.
At this stage, a general fibre of $X''\to Z$ is an $\epsilon$-lc Fano variety, so it belongs
to a bounded family of projective varieties by \cite[Theorem 1.1]{B-BAB}.
Again, note that the centre of $S'$ on $X''$ may not be a divisor.
However, up to shrinking $Z$ to an open subset,
there is a relatively bounded family of projective couples $(Y, D)\to Z$ such that
\begin{itemize}
    \item there is a birational contraction $Y\dashrightarrow X$ over $Z$,
    \item $(Y, D)\to Z$ is log smooth,
    \item $\centre_Y S$ is an lc centre of $(Y, D)$, and
    \item the general fibre of $S\dashrightarrow \centre_Y S$ is irreducible.
\end{itemize}
Then by the properties listed above, we can conclude that $\sdeg(S^{\nor}/Z)$
is bounded from above depending only on $d,t$.


\mni
{\textbf{\sffamily{Acknowledgements.}}}
The first author was supported by a grant from Tsinghua University and 
a grant of the National Program of Overseas High Level Talent. 
We thank Hexu Liu for reading carefully a draft version of this paper,
and for his many suggestions to improve the paper.
We would also like to thank Bingyi Chen and Xiaowei Jiang for helpful comments.


\section{Preliminaries}\label{preliminaries}

Recall that we work over an algebraically closed field $\mbb K$ of characteristic zero.
The set of natural numbers $\NN$ is the set of all positive integers, 
so it does not contain 0; cf. \cite[\S 2]{B-Fano}.  Denote by $\QQ^{>0}$, respectively, by $\RR^{>0}$,
the set of all positive rational numbers, respectively, the set of all positive real numbers.

By a \emph{$\mbb K$-scheme} (or simply, a \emph{scheme}), we mean a scheme that is of finite type and separated over $\mbb K$.
For a scheme $X$, denote by $\red X$ the maximal reduced closed subscheme of $X$, 
called the \emph{reduction} of $X$.
For an integral scheme $X$, we denote by $X^{\nor}$ the \emph{normalisation} of $X$.
A \emph{variety} is a quasi-projective, reduced, and irreducible $\mbb K$-scheme;
when emphasising the quasi-projectivity, we also call it a \emph{quasi-projective variety}.

Let $f\colon X\to Z$ be a morphism between schemes, where $Z$ is irreducible.
If $X$ has a unique irreducible component $Y$ that dominates $Z$, we call
$Y$ the \emph{main component of $X$}.
If this is the case, we usually just say that $Y$ is the main component of $X$
without mentioning that it is the unique irreducible component of $X$ dominating $Z$.

Let $X$ be a scheme.  A \emph{prime divisor} on $X$ is a reduced and irreducible closed subscheme of $X$
of codimension one.  By a \emph{divisor}, we mean a \emph{Weil divisor} on $X$, that is,
a finite $\ZZ$-linear combination of prime divisors on $X$, 
so a divisor is not necessarily Cartier.


\subsection{Contractions}
 
A \emph{contraction} is a projective morphism of schemes $f\colon X\to Y$
such that $f_*\mc{O}_X = \mc{O}_Y$; $f$ is not necessarily birational (see \cite[\S 2.1]{B-Fano}).
In particular, $f$ has connected fibres.  Moreover, if $X$ is normal, then $Y$ is also normal.

We say that a birational map $\phi\colon X\dashrightarrow Y$ of schemes is a 
\emph{birational contraction} if $\phi$ is proper
and $\phi^{-1}$ does not contract any divisors; see \cite[page 424]{BCHM}.


\subsection{Divisors}\label{divisors-pre}

Let $X$ be a scheme.  
By a \emph{$\QQ$-divisor} (respectively, an \emph{$\RR$-divisor}), we mean a finite linear combination
$\sum_i b_i B_i$, where every $B_i$ is a prime divisor on $X$ and $b_i\in \QQ$ 
(respectively, $b_i\in \RR$). 
Let $D$ be an $\RR$-divisor on $X$.
For any prime divisor $S$ on $X$, we denote by $\coeff_S D$ the coefficient of $S$ in $D$. 
Moreover, we denote by $\supp D$ the support of $D$, which is a closed subset of $X$.
If there is no confusion in the context, we view $\supp D$ as the reduced divisor whose 
set of irreducible components is equal to that of $D$.

A $\QQ$-divisor (respectively, an $\RR$-divisor) is called \emph{$\QQ$-Cartier}
(respectively, \emph{$\RR$-Cartier})
if it is a $\QQ$-linear (respectively, an $\RR$-linear) combination of Cartier divisors.
Let $B_1$ and $B_2$ be two $\RR$-divisors on $X$.  We say that $B_1\sim B_2$ (respectively, 
$B_1\sim_{\QQ} B_2$, respectively, $B_1\sim_{\RR} B_2$) if $B_1 - B_2$ is a $\ZZ$-linear 
(respectively, a $\QQ$-linear, respectively, an $\RR$-linear) combination of principal divisors;
see \cite[\S 2.3]{B-Fano}.

Let $f\colon X\to Z$ be a morphism of schemes, and let $L$ and $M$ be $\RR$-divisors
on $X$.  We say that \emph{$L\sim M$ over $Z$} 
(respectively, \emph{$L\sim_{\QQ} M$ over $Z$}, respectively, \emph{$L\sim_{\RR} M$ over $Z$})
if there is a Cartier (respectively, a $\QQ$-Cartier, respectively, an $\RR$-Cartier) divisor $N$ on $Z$ 
such that $L-M\sim f^*N$ (respectively, $L-M \sim_{\QQ} f^*N$, respectively, $L-M \sim_{\RR} f^*N$); cf. \cite[\S 2.3]{B-Fano}.

Let $f\colon X\to Z$ be a morphism of schemes, and let $D$ be a nonzero $\RR$-divisor on $X$.
We say that $D$ is \emph{vertical$/Z$} if $f(\supp D)$ does not contain
any generic point of $Z$.
If $D$ does not have any vertical$/Z$ irreducible components,
we say that $D$ is \emph{horizontal$/Z$}.


We take the definition of (\emph{relatively}) \emph{big $\RR$-divisors} from \cite[\S3.1]{BCHM}. 
The following result can be derived immediately from
the definition of (relatively) big $\RR$-divisors.

\begin{lemma}\label{no-van-big-divisor}
	Let $\phi\colon X\dasharrow Y/Z$ be a birational contraction of
	normal varieties that are projective over $Z$, and let $D$ be a big$/Z$ $\RR$-Cartier $\RR$-divisor
	on $X$.  Then $\phi_*D$ is also a big$/Z$ $\RR$-divisor on $Y$.
	In particular, if $D$ is an $\RR$-Cartier big$/Z$ prime divisor on $X$, then $\phi_* D \not=0$.
\end{lemma}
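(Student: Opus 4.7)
\emph{Plan.} The plan is to deduce the lemma directly from the definition of bigness by comparing spaces of global sections on $X$ and $Y$ across the open subset on which $\phi$ is an isomorphism. Since $\phi$ is a birational contraction, the hypothesis that $\phi^{-1}$ contracts no divisors gives open subsets $U\subset X$ and $V\subset Y$ with $\phi|_U\colon U\to V$ an isomorphism and with $Y\setminus V$ of codimension $\ge 2$ in $Y$. In particular, under the Weil-divisor pushforward one has $(\phi_*D)|_V=(\phi|_U)_*(D|_U)$, which is the bridge between the two linear series.

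First, I would reduce to the absolute setting by passing to the generic fibre of $X\to Z$ (and of $Y\to Z$), since bigness$/Z$ is detected on that generic fibre. Next, using the openness of the big cone together with Kodaira's characterization of bigness (big $=$ ample $+$ effective up to $\sim_{\RR}$), I would reduce from $\RR$-Cartier to $\QQ$-Cartier divisors: write $D\sim_{\RR,Z}D'+\sum r_iA_i$ with the $A_i$ very ample$/Z$ and the $r_i$ small rational perturbations so that $D'$ is $\QQ$-Cartier and still big$/Z$; once $\phi_*D'$ is known to be big$/Z$, the pushforward of $\sum r_iA_i$ remains within the big cone on $Y$ because the $\phi_*A_i$ are still big$/Z$ (they are effective and their class lies in the relative pseudoeffective cone, while we already have a big$/Z$ class $\phi_*D'$).

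For a $\QQ$-Cartier big$/Z$ divisor $D$ on $X$ and any integer $m>0$ with $mD$ Cartier, restriction of sections from $X$ to $U$ is injective; composing with the isomorphism $\phi|_U$ and the identification $(\phi_*D)|_V=(\phi|_U)_*(D|_U)$ yields an injection $H^0(X,mD)\hookrightarrow H^0(V,m\phi_*D|_V)$. Because $Y$ is normal and $Y\setminus V$ has codimension $\ge 2$, the restriction map $H^0(Y,m\phi_*D)\to H^0(V,m\phi_*D|_V)$ is an isomorphism (sections of the reflexive hull of $\mcO_Y(m\phi_*D)$ extend across a codimension-$\ge 2$ locus). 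Combining gives $h^0(Y_\eta,m\phi_*D_\eta)\ge h^0(X_\eta,mD_\eta)$ for $\eta$ the generic point of $Z$, which grows like $m^{\dim X_\eta}$ by bigness of $D$; hence $\phi_*D$ is big$/Z$. The ``in particular'' assertion is then immediate: a big$/Z$ divisor has positive relative volume and is therefore nonzero, so when $D$ is a prime divisor we conclude $\phi_*D\neq 0$.

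The main obstacle is essentially bookkeeping: ensuring that the Weil-divisor pushforward $\phi_*D$ agrees with the reflexive pushforward of the associated divisorial sheaf used in the section comparison, and that the $\RR$-to-$\QQ$ perturbation stays inside the big cone. Both are routine given the codimension-$\ge 2$ complement $Y\setminus V$ and the normality of $Y$, but require some care.
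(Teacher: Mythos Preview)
Your proposal is correct and follows the same route as the paper, which simply states that the lemma is immediate from the definition of (relatively) big $\RR$-divisors and gives no further details. Your section-comparison via the codimension-$\ge 2$ complement $Y\setminus V$ is precisely the standard unpacking of that remark; the $\RR$-to-$\QQ$ reduction you sketch is a bit more elaborate than strictly necessary (one can work directly with round-downs $\lfloor mD\rfloor$), but the idea is sound.
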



\subsection{Irreducible components of fibres}\label{pre-stein-deg-divisors}

Now we introduce some notions for the number of irreducible components of schemes
to simplify the statements in this paper.


\begin{definitionnotation}
	Let $X$ be a $\mbb K$-scheme.  Denote by $\mf{n}(X)$ 
	the number of irreducible components of $X$.  
	Let $X\to Y$ be a dominant morphism of $\mbb K$-schemes with $Y$ irreducible.
    Denote by $\dim (X/Y)$ the dimension of irreducible components of a general fibre of $X\to Y$.
    Note that $\dim (X/Y)$ is well-defined by generic flatness;
    see \cite[Th\'eor\`eme (6.9.1)]{EGA-IV-II} and \cite[Corollary III.9.6]{Hart}.
    In particular, if $X\to Y$ is a dominant morphism of varieties,
    every irreducible component of a general fibre of $X\to Y$ has dimension $\dim X - \dim Y$.
	Furthermore, denote by $\mf{n}(X/Y)$ the number of irreducible components of a general fibre of $X\to Y$,
    which is well-defined by \cite[Proposition (9.7.8)]{EGA-IV-3}.
\end{definitionnotation}


\begin{lemma}\label{num-g-fibre}
    Let $f\colon X\to Y$ be a dominant morphism of varieties.  Let $K(X)$ and $K(Y)$
    be the function fields of $X$ and $Y$ respectively.
    Then for an arbitrary nonempty open subset $U\subseteq X$, we have
    \begin{equation}
    \mf{n} (X/Y) = \mf{n}(U/Y) = \mf{n} \big( \spec \big( K(X)\otimes_{K(Y)} \ol{K(Y)} \big) \big). \label{dim-equa}
    \end{equation}
    In particular, $K(X)\otimes_{K(Y)} \ol{K(Y)}$ is the direct sum of $\mf{n}(X/Y)$ fields.
\end{lemma}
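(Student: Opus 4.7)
I would prove the two equalities in \eqref{dim-equa} in turn, and then derive the in-particular clause.

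First, the equality $\mf{n}(X/Y) = \mf{n}(U/Y)$ follows from a dimension argument. Set $Z := X \setminus U$, viewed as a reduced closed subscheme of $X$. If $\ol{f(Z)} \ne Y$, then for a general $y \in Y$ the fibre $Z_y$ is empty and $\mf{n}(U_y) = \mf{n}(X_y)$ trivially. Otherwise $f|_Z \colon Z \to Y$ is dominant; since $\dim Z < \dim X$, the theorem on dimensions of fibres gives $\dim Z_y < \dim X - \dim Y$ for $y$ in an open dense subset of $Y$. By generic flatness every irreducible component of $X_y$ has dimension $\dim X - \dim Y$ for general $y$, so no such component can be contained in $Z_y$; thus every component of $X_y$ meets $U_y$ and $\mf{n}(X_y) = \mf{n}(U_y)$.

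Second, I would identify $\mf{n}(X/Y)$ with the number of irreducible components of the geometric generic fibre $X_{\bar\eta} := X \times_Y \spec \ol{K(Y)}$, where $\eta$ is the generic point of $Y$. This is a standard consequence of \cite[Proposition (9.7.8)]{EGA-IV-3}, already invoked in defining $\mf{n}(X/Y)$: the function sending $y \in Y$ to the number of (geometric) irreducible components of $X_y$ is constructible and generically constant with value $\mf{n}(X_{\bar\eta})$, and at closed points the geometric and ordinary counts agree since $\mbb{K}$ is algebraically closed.

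The core step is the identification $\mf{n}(X_{\bar\eta}) = \mf{n}(\spec A)$ together with the claim that $A := K(X) \otimes_{K(Y)} \ol{K(Y)}$ is a finite direct sum of fields. The projection $\pi \colon X_{\bar\eta} \to X_\eta$ is faithfully flat (as a base change of the field extension $K(Y) \to \ol{K(Y)}$), so going-down applies: any generic point $\xi$ of $X_{\bar\eta}$ must map to the unique generic point $\eta_X = \spec K(X)$ of the integral scheme $X_\eta$, for otherwise going-down would produce a prime strictly generalising $\xi$ above $\eta_X$. Conversely, any generisation in $X_{\bar\eta}$ of a point of the fibre $\pi^{-1}(\eta_X) = \spec A$ again lies above $\eta_X$, hence inside $\spec A$. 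Therefore the generic points of $X_{\bar\eta}$ correspond exactly to the minimal primes of $A$. To analyse $A$, I would pick a transcendence basis $t_1, \dots, t_r$ of $K(X)/K(Y)$ and write $K(X) = L(\alpha)$ with $L := K(Y)(t_1, \dots, t_r)$ and $p(T) \in L[T]$ the (separable, as we are in characteristic zero) minimal polynomial of $\alpha$. A direct localisation calculation, using that every nonzero $q \in \ol{K(Y)}[t_1, \dots, t_r]$ divides the product of its Galois conjugates over a finite Galois extension of $K(Y)$ containing its coefficients, shows $L \otimes_{K(Y)} \ol{K(Y)} \cong \ol{K(Y)}(t_1, \dots, t_r) =: L'$, a field. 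Then $A \cong L'[T]/(p(T))$, and factoring $p = \prod_i p_i$ into distinct irreducibles in $L'[T]$ gives $A \cong \prod_i L'[T]/(p_i)$ by the Chinese Remainder Theorem, a finite direct sum of fields whose summands are indexed by the minimal primes of $A$.

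The main technical obstacle is the faithful-flatness and going-down argument matching generic points of $X_{\bar\eta}$ with minimal primes of $A$; the reductions in the first two steps are standard, and the final field-theoretic decomposition is routine given characteristic zero.
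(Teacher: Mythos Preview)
Your proof is correct and follows the same overall structure as the paper: establish $\mf{n}(X/Y)=\mf{n}(U/Y)$ by a dimension/generic-flatness argument, then identify $\mf{n}(X/Y)$ with the number of points of $\spec A$ where $A=K(X)\otimes_{K(Y)}\ol{K(Y)}$, and finally show $A$ is a finite product of fields. The first step is essentially identical to the paper's.

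The difference lies in how the second equality and the in-particular clause are handled. The paper simply cites \cite[Proposition (3.4.9)]{EGA-I} to identify the generic points of $X_{\bar\eta}$ with the points of $\spec A$, and then appeals to separability of $K(X)/K(Y)$ in characteristic zero together with \cite[\S 8, Proposition 3]{Bourbaki-alg-ch8} to conclude that $A$ is reduced and zero-dimensional, hence a direct sum of fields. You instead supply self-contained arguments for both facts: going-down for faithfully flat maps to match generic points of $X_{\bar\eta}$ with minimal primes of $A$, and an explicit transcendence-basis computation (the Galois-norm trick showing $K(Y)(t_1,\dots,t_r)\otimes_{K(Y)}\ol{K(Y)}\cong\ol{K(Y)}(t_1,\dots,t_r)$, followed by the primitive element theorem and CRT) to decompose $A$. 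Your route is longer but more elementary, avoiding the black-box citations; the paper's is quicker but relies on knowing exactly where in EGA and Bourbaki these standard facts live. Both are valid, and your going-down argument is essentially the content of the EGA reference the paper invokes.
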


\begin{proof}
    Let $T := X\setminus U$ be the closed subscheme of $X$
    equipped with the reduced scheme structure.
    By generic flatness, shrinking $Y$ around its generic point, 
    we can assume that the morphisms $X\to Y$, $U\to Y$,
    and $T\to Y$ are all flat.  In particular, $\dim (T/Y)$
    is strictly less than $\dim (X/Y) = \dim (U/Y)$, hence
    for a general closed point $y\in Y$, the fibres $X_y$ and $U_y$
    have the same set of generic points.
    Thus, we have $\mf{n}(X/Y) = \mf{n}(U/Y)$.
    By \cite[Chap. I, Proposition (3.4.9)]{EGA-I}, the generic points of $X\otimes_{K(Y)}\ol{K(Y)}$
    are the points of $\spec \big( K(X)\otimes_{K(Y)}\ol{K(Y)} \big)$.
    Moreover, as $\chara \mbb K = 0$, $K(X)$ is separable over $K(Y)$; see \cite[\S 26]{CA-Mat}.
    Then the scheme $K(X)\otimes_{K(Y)}\ol{K(Y)}$ is reduced and has dimension zero, hence
    it is a direct sum of $\mf{n}(X/Y)$ fields; cf. \cite[\S 8, Proposition 3]{Bourbaki-alg-ch8}.
    Thus, we can conclude that the second equality of \eqref{dim-equa} holds.
\end{proof}


\begin{definition}\label{defn-rational-num-g-fib}
    Let $f\colon X\dashrightarrow Y$ be a dominant rational map of varieties.  
    Let $U\subseteq X$ be an arbitrary open subset of $X$ on which 
    $f$ is well-defined.  Define 
    \[ \mf{n}(X/Y) := \mf{n}(U/Y). \]
    Note that $\mf{n}(X/Y)$ does not depend on the choice of $U$ by Lemma~\ref{num-g-fibre}.
\end{definition}


\begin{remark}\label{bir-map-irr-comp}
    Let $f\colon S\dashrightarrow S'/Z$ be a birational map of varieties 
    equipped with dominant morphisms to a variety $Z$.
    As $K(S)\cong K(S')$, Lemma~\ref{num-g-fibre} gives the equality
    \begin{equation}
        \mf{n}(S/Z) = \mf{n}(S'/Z). \label{bir-dim-equa}
    \end{equation}
    Let $X\to Z$ be a contraction of normal varieties, and let $S$ be a horizontal$/Z$ prime divisor on $X$.
    Then \eqref{bir-dim-equa} enables us to compute $\mf{n}(S/Z)$
    after running an MMP $\phi\colon X\dashrightarrow X'$ over $Z$ on some $\RR$-divisors (see \S\ref{pre-MMP})
    such that $S$ is not contracted by $\phi$. 
\end{remark}


\begin{lemma}\label{mul-g-fib}
    Let $f\colon X\dashrightarrow Y$ and $g\colon Y\dashrightarrow Z$
    be dominant rational maps of varieties.  Then
    \[ \mf{n}(X/Z) \le \mf{n}(X/Y)\cdot \mf{n}(Y/Z). \]
    Moreover, if $\mf{n}(X/Y) = 1$, i.e., a general fibre of $f$ is irreducible, then $\mf{n}(X/Z) = \mf{n}(Y/Z)$.
\end{lemma}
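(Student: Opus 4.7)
The plan is to reduce to morphisms and then to dissect a general fibre of $gf$ via a general fibre of $g$. By Definition~\ref{defn-rational-num-g-fib} and Lemma~\ref{num-g-fibre}, shrinking $X$ and $Y$ to nonempty open subsets on which $f$ and $g$ are defined leaves each of the three quantities $\mf{n}(X/Z)$, $\mf{n}(Y/Z)$, $\mf{n}(X/Y)$ unchanged, so I will assume from the outset that $f\colon X\to Y$ and $g\colon Y\to Z$ are morphisms.

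First I would pick a sufficiently general closed point $z\in Z$ so that: $g^{-1}(z)=Y_{z,1}\cup\dots\cup Y_{z,r}$ with $r=\mf{n}(Y/Z)$ irreducible components, all of pure dimension $\dim Y-\dim Z$; $(gf)^{-1}(z)$ has pure dimension $\dim X-\dim Z$ with exactly $\mf{n}(X/Z)$ irreducible components; and every $Y_{z,i}$ meets the open dense locus $V\subseteq Y$ over which the geometric $f$-fibres have $\mf{n}(X/Y)$ components. The last property will be secured by applying generic flatness to the restriction of $g$ to each irreducible component of $Y\setminus V$: any such component has dimension strictly less than $\dim Y$, so its intersection with $g^{-1}(z)$ has dimension less than $\dim Y-\dim Z$ for general $z$, and therefore cannot contain any $Y_{z,i}$.

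The main computation then proceeds by writing $X_z:=(gf)^{-1}(z)=\bigcup_i f^{-1}(Y_{z,i})$. A dimension comparison, together with the fact that $f^{-1}(Y_{z,i}\cap Y_{z,j})$ has dimension strictly less than $\dim X-\dim Z$ for $i\ne j$, shows that each irreducible component $W$ of $X_z$ is contained in a unique $f^{-1}(Y_{z,i})$ and dominates $Y_{z,i}$. It then suffices to bound the number of such dominating components over a fixed $i$ by $\mf{n}(X/Y)$. Labelling them $W_1,\dots,W_n$ and choosing $y\in Y_{z,i}\cap V$ general, any non-dominating component of $f^{-1}(Y_{z,i})$ misses $f^{-1}(y)$, so $f^{-1}(y)=\bigcup_{k=1}^n(W_k\cap f^{-1}(y))$. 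Because the pairwise intersections $W_k\cap W_{k'}$ have strictly smaller dimension than $W_k$, for general $y$ the pieces $W_k\cap f^{-1}(y)$ contribute pairwise distinct irreducible components, and generic flatness of each $f|_{W_k}$ gives $\mf{n}(X/Y)=\mf{n}(f^{-1}(y))=\sum_{k=1}^n \mf{n}(W_k/Y_{z,i})\ge n$.

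Summing over $i$ yields $\mf{n}(X/Z)\le r\cdot\mf{n}(X/Y)=\mf{n}(Y/Z)\cdot\mf{n}(X/Y)$. For the moreover clause, $\mf{n}(X/Y)=1$ forces $n=1$ and $\mf{n}(W_1/Y_{z,i})=1$ in every group, whence $\mf{n}(X/Z)=r=\mf{n}(Y/Z)$. The main obstacle will be the bookkeeping for the generic choice of $z$: one must verify that a single dense open subset of $Z$ suffices for all of the flatness and semicontinuity assertions on $f$, $g$, and $gf$ to hold simultaneously, together with the condition that every $Y_{z,i}$ meets $V$. This is a routine constructibility argument, but is the one point where one should proceed with care.
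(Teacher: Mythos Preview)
Your proposal is correct and follows essentially the same geometric approach as the paper: reduce to morphisms, pick a general $z\in Z$, decompose the fibre $X_z$ along the components $Y_{z,i}$ of $Y_z$, and bound the number of components of $X_z$ lying over each $Y_{z,i}$ by $\mf{n}(X/Y)$. The paper's proof is terser --- it simply asserts that ``it is evident that the number of irreducible components of $F$ mapped into $G'$ is $\le \mf{n}(X/Y)$'' --- whereas you spell out this step via the decomposition $f^{-1}(y)=\bigcup_k(W_k\cap f^{-1}(y))$ for a general $y\in Y_{z,i}\cap V$. One small point: to justify cleanly that each component $W$ of $X_z$ \emph{dominates} its $Y_{z,i}$ (and that $f^{-1}(Y_{z,i}\cap Y_{z,j})$ has the right dimension), you should also shrink $Y$ at the outset so that $f$ is flat, not just shrink $Z$; this is harmless by Lemma~\ref{num-g-fibre} and is implicit in the paper's setup as well.
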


\begin{proof}
    This follows from Lemma~\ref{num-g-fibre}.
    Here we present a more geometric proof.
    Taking open subsets on which the rational maps are well-defined, we can assume that
    $f,g$ are dominant morphisms of varieties.
    Let $F, G$ be fibres of $g\circ f, g$ over a common general closed point of $Z$.
    Then $G$ has $\mf{n}(Y/Z)$ irreducible components.
    By \cite[Th\'eor\`eme (1.8.4)]{EGA-I}, the image $f(X)$ is a constructible subset containing
    the generic point of $Y$, hence it contains an open subset $U$ of $Y$.
    Shrinking $U$ if necessary, we can assume that the fibres of $f$ over $U$ are all equi-dimensional 
    and have the same number of geometric irreducible components.
    By generic flatness, we can assume that $G\cap U$ contains all the generic points of $G$.
    Then $F\to G$ is a dominant morphism of reduced schemes.
    Let $G'$ be an irreducible component of $G$.
    It is evident that the number of irreducible components of $F$ mapped into $G'$ 
    is $\le \mf{n}(X/Y)$.  Thus, we can conclude that $\mf{n}(F)$ is less than
    or equal to $\mf{n}(X/Y)\cdot \mf{n}(Y/Z)$.
    
    Moreover, if $\mf{n}(X/Y) = 1$, then for every irreducible component $G'$ of $G$,
    there is a unique irreducible component of $F$ dominating $G'$,
    hence we have $\mf{n}(X/Z) = \mf{n}(Y/Z)$ as desired.
\end{proof}


\subsection{Generically finite morphisms}

Let $f\colon X\to Y$ be a morphism of schemes.
Denote by $X^0$, respectively, by $Y^0$, the set of generic points of
irreducible components of $X$, respectively, of $Y$.
We say that $f$ is \emph{generically finite} if $f^{-1}(Y^0) = X^0$.
If $f\colon X\to Y$ is a generically finite morphism of schemes,
there exists a dense open subset $V\subset Y$ such that $f^{-1}(V)\to V$ is finite;
we denote the degree of $f^{-1}(V)\to V$ by $\deg (X/Y)$.  
If $f\colon X\to Y$ is a generically finite morphism of integral schemes,
then $\deg (X/Y)$ is equal to the degree of the fields extension $K(Y)\to K(X)$.


\begin{lemma}\label{finite-fibre}
	Let $\pi\colon X\to Y$ be a proper, generically finite morphism of varieties.
	Assume that $Y$ is normal.  Then there exists an open subset $U\subseteq Y$
	satisfying 
    \[ \codim (Y\setminus U, Y) \ge 2 \] 
    such that $\dim X_y = 0$ for every scheme-theoretic point $y\in U$.
	Moreover, 
	\[ \mf{n}(X_{\ol{y}}) \le \mf{n}(X/Y) \]
	for every scheme-theoretic point $y\in U$, where $X_{\ol{y}}$ is the geometric fibre of $X\to Y$ over $y$.
\end{lemma}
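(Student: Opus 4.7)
The plan is to split the proof into two parts: (a) exhibit an open $U_1 \subseteq Y$ with $\codim(Y \setminus U_1, Y) \ge 2$ over which every fibre of $\pi$ is zero-dimensional, and (b) shrink $U_1$ further to an open $U$, still keeping $\codim(Y \setminus U, Y) \ge 2$, so that $\pi$ becomes finite flat over $U$, which yields the inequality $\mf n(X_{\ol y}) \le \mf n(X/Y)$.

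For (a), let $E := \{x \in X : \dim_x X_{\pi(x)} \ge 1\}$, so that $B := \pi(E) = \{y \in Y : \dim X_y \ge 1\}$. By upper semicontinuity of fibre dimension $E$ is closed in $X$, and since $\pi$ is proper $B$ is closed in $Y$. Since $\pi$ is generically finite, $\dim X = \dim Y$ and the zero-dimensional-fibre locus is a nonempty open subset of $Y$; hence $E \subsetneq X$ and $\dim E \le \dim Y - 1$. The key observation is that the fibres of $E \to B$ have dimension $\ge 1$: given $x \in E$ over $y = \pi(x)$, every irreducible component $C$ of $X_y$ through $x$ of positive dimension lies entirely in $E$, since every point of $C$ inherits a positive-dimensional fibre of $\pi$. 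Hence $\dim B \le \dim E - 1 \le \dim Y - 2$, and $U_1 := Y \setminus B$ satisfies the codimension bound.

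For (b), the combination ``proper plus quasi-finite equals finite'' applied over $U_1$ shows that $\pi^{-1}(U_1) \to U_1$ is finite: if $W \subseteq X$ is the open quasi-finite locus of $\pi$, then $\pi(X \setminus W)$ is closed in $Y$ and disjoint from $U_1$. Consequently $\mathcal F := \pi_* \mathcal O_X|_{U_1}$ is a coherent, torsion-free $\mathcal O_{U_1}$-algebra whose generic rank equals $d := [K(X):K(Y)]$; by Lemma~\ref{num-g-fibre} and separability in characteristic zero, $d = \mf n(X/Y)$. Since $Y$ is normal and therefore regular in codimension one, the stalk of $\mathcal O_{U_1}$ at any codimension-one point is a DVR, over which any finitely generated torsion-free module is free. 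Thus the non-locally-free locus of $\mathcal F$ inside $U_1$ has codimension $\ge 2$; let $U$ be its complement. Over $U$ the morphism $\pi$ is finite flat of rank $d$, so for any scheme-theoretic point $y \in U$ the fibre $X_y$ is zero-dimensional with $\dim_{\kappa(y)} \mathcal O_{X_y} = d$, and base changing to $\overline{\kappa(y)}$ gives an Artinian algebra $\mathcal O_{X_{\ol y}}$ of total length $d$ and hence at most $d$ maximal ideals, so $\mf n(X_{\ol y}) \le d = \mf n(X/Y)$.

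The main obstacle is the codimension-two bound in (a): one must notice that a positive-dimensional component of a fibre of $\pi$ is automatically contained in $E$, so that $E \to B$ loses an extra dimension on top of the codimension-one bound supplied by generic finiteness; without this observation one only gets $\codim(B, Y) \ge 1$. The remainder of the argument is a routine application of normality in codimension one together with the freeness of finitely generated torsion-free modules over DVRs.
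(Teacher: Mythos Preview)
Your proof is correct, and it reaches the same endpoint as the paper---$\pi$ finite flat over an open $U$ with codimension-two complement---but by a different order of operations. The paper goes straight to flatness: since $Y$ is normal, $\mc{O}_{Y,\xi}$ is a DVR at every codimension-one point $\xi$, and integrality of $X$ makes $\pi$ flat there; openness of the flat locus then yields $U$ immediately, and flatness together with generic finiteness forces every fibre over $U$ to be zero-dimensional, whence proper plus quasi-finite gives finite. You instead first produce the zero-dimensional-fibre locus $U_1$ by the semicontinuity and dimension-count argument in (a), which does not use normality of $Y$ at all, and only afterwards invoke the DVR property in (b) to shrink to a finite-flat locus $U$. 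Your route is slightly longer but has the mild bonus that part (a) is valid for any proper generically finite morphism of varieties, normal target or not; the paper's argument is shorter because flatness delivers both the codimension bound and the zero-dimensional fibres in one stroke. The final counting step---finite flat of rank $d=[K(X):K(Y)]=\mf n(X/Y)$, hence at most $d$ geometric points in each fibre---is identical in both.
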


\begin{proof}
	As $Y$ is normal, every codimension one point $\xi\in Y$ is regular, so $\mc{O}_{Y, \xi}$
	is a DVR.  Then $\pi$ is flat over $\xi$ since $X$ is integral.
	Thus, there exists an open subset $U\subseteq Y$ whose complement $Y\setminus U$
	has codimension at least two such that $\pi$ is flat over $U$.
    By flatness, $\dim X_y = 0$ for every scheme-theoretic $y\in U$ as $\pi\colon X\to Y$ is generically finite.
    In particular, $\pi^{-1}(U) \to U$ is a finite morphism
    as it is proper and quasi-finite.  Since every finite flat module over a local ring is free,
	the remained result follows immediately.
\end{proof}


\subsection{Stein degree of normal varieties}

Note that Stein degree is not a birational invariant; see \cite[Example 3.1]{B-moduli}.
However, by taking Stein factorisations, it is easy to see 
that Stein degree of normal varieties is birationally invariant.

\begin{lemma}\label{num-irr-comp}
	Let $X\to Y$ be a surjective and projective morphism of varieties.  Then
    \begin{equation}
        \sdeg (X/Y) \le \mf{n}(X/Y) = \mf{n}(X^{\nor}/Y) = \sdeg (X^{\nor}/Y),\label{normal-sdeg}
    \end{equation}
	where $X^{\nor}$ is the normalisation of $X$.
	
	Moreover, let $S\dasharrow T/Z$ be a birational map of normal quasi-projective varieties
	that are surjective and projective over a variety $Z$.  Then $\sdeg (S/Z) = \sdeg (T/Z)$.
\end{lemma}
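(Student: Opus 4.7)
The plan is to prove the chain $\sdeg(X/Y)\le \mf{n}(X/Y) = \mf{n}(X^{\nor}/Y) = \sdeg(X^{\nor}/Y)$ by combining the Stein factorisation with Lemma~\ref{num-g-fibre} and Lemma~\ref{mul-g-fib}. For the inequality, I would let $X\to V\to Y$ be the Stein factorisation of $X\to Y$. Then $V\to Y$ is finite of degree $\sdeg(X/Y)$, and since $X\to V$ has connected fibres, each connected component of a general fibre $X_y$ sits over one of the $\sdeg(X/Y)$ points of the geometric fibre of $V\to Y$. As every connected component contains at least one irreducible component, the inequality $\sdeg(X/Y)\le \mf{n}(X/Y)$ follows.

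The middle equality $\mf{n}(X/Y)=\mf{n}(X^{\nor}/Y)$ is immediate from Lemma~\ref{num-g-fibre}: the normalisation $X^{\nor}\to X$ is birational, so $K(X^{\nor})=K(X)$, and the formula there computes both sides purely from the geometric generic fibre $\spec(K(X)\otimes_{K(Y)}\ol{K(Y)})$, which depends only on $K(X)$.

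The main work goes into the right-hand equality $\mf{n}(X^{\nor}/Y)=\sdeg(X^{\nor}/Y)$. Let $X^{\nor}\to W\to Y$ be the Stein factorisation. Since $X^{\nor}$ is normal and integral, the generic fibre of $X^{\nor}\to Y$ is normal, integral, and proper over $K(Y)$, so its ring of global sections is a finite field extension of $K(Y)$, equal to the algebraic closure of $K(Y)$ in $K(X^{\nor})$; this identifies $W$ as an irreducible normal variety with $K(W)$ equal to this algebraic closure (normality of $W$ follows from normality of $X^{\nor}$, since $f_*\mc{O}_{X^{\nor}}$ is integrally closed in its sheaf of fractions). Working in characteristic zero, $K(X^{\nor})/K(W)$ is a regular field extension, so $K(X^{\nor})\otimes_{K(W)}\ol{K(W)}$ is an integral domain; Lemma~\ref{num-g-fibre} then forces $\mf{n}(X^{\nor}/W)=1$. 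Applying the equality case of Lemma~\ref{mul-g-fib} to $X^{\nor}\to W\to Y$ gives $\mf{n}(X^{\nor}/Y)=\mf{n}(W/Y)$, and since $W\to Y$ is finite with $K(W)/K(Y)$ separable, $\mf{n}(W/Y)=[K(W):K(Y)]=\deg(W/Y)=\sdeg(X^{\nor}/Y)$.

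For the moreover part, the chain \eqref{normal-sdeg} together with the normality of $S$ and $T$ reduces $\sdeg(S/Z)=\sdeg(T/Z)$ to $\mf{n}(S/Z)=\mf{n}(T/Z)$, which is precisely the content of Remark~\ref{bir-map-irr-comp} for the birational map $S\dashrightarrow T$ over $Z$. The main obstacle is the structural statement about the Stein factorisation of a normal variety: one needs that $W$ is normal and that $K(W)$ is the algebraic closure of $K(Y)$ in $K(X^{\nor})$, after which characteristic-zero separability upgrades the connectedness of the fibres of $X^{\nor}\to W$ to geometric irreducibility and the entire chain collapses.
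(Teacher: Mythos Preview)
Your proof is correct and follows the same overall architecture as the paper: Stein-factorise, show the first leg has irreducible general fibre, then apply Lemma~\ref{mul-g-fib}. The differences are in the justifications of the two equalities. For $\mf{n}(X/Y)=\mf{n}(X^{\nor}/Y)$, you invoke the function-field formula of Lemma~\ref{num-g-fibre} directly (since $K(X)=K(X^{\nor})$), whereas the paper argues geometrically that the normalisation map induces a bijection on generic points of the general fibres. For $\mf{n}(X^{\nor}/W)=1$, you identify $K(W)$ as the algebraic closure of $K(Y)$ in $K(X^{\nor})$ and use that $K(X^{\nor})/K(W)$ is then a regular extension in characteristic zero; the paper instead uses that a general fibre of a morphism from a normal variety is normal (in characteristic zero) and connected, hence irreducible. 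Your route is a bit more algebraic and makes the role of characteristic zero explicit via separability/regularity, while the paper's route is more geometric and implicitly uses the same separability through the genericity of the fibre; neither is materially shorter or more general here.
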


\begin{proof}
    Let $X^{\nor}\to V\to Y$ be the Stein factorisation of $X^{\nor}\to Y$.
	A general fibre of $X^{\nor}\to V$ is normal and connected, hence irreducible.
	Thus, we have $\mf{n}(X^{\nor}/Y) = \sdeg (X^{\nor}/Y)$ by Lemma~\ref{mul-g-fib}.
    On the other hand, as $X$ is a variety, the normalisation morphism $X^{\nor}\to X$
    is an isomorphism over a dense open subset of $X$.
	Let $F$ be a general fibre of $X\to Y$, and let $G$ be a general fibre of $X^{\nor}\to Y$.
	By generic flatness, we can assume that $F$ and $G$ are equi-dimensional.
    We can also assume that
	the induced finite morphism $G\to F$ is an isomorphism over a dense open subset of $F$
    containing all the generic points of $F$; cf. the proof of Lemma~\ref{mul-g-fib}.
	In particular, $G\to F$ induces a bijection on generic points of $G$ and $F$, 
	so $\mf{n} (X/Y) = \mf{n}(X^{\nor}/Y)$.
	
	Now let $S\dasharrow T/Z$ be a birational map of normal varieties
	that are surjective and projective over a variety $Z$.
    Then Remark~\ref{bir-map-irr-comp} shows that general fibres of 
	$S\to Z$ and $T\to Z$ have the same number of irreducible components,
	which is the Stein degree of both $S\to Z$ and $T\to Z$ by~\eqref{normal-sdeg}.
\end{proof}


\subsection{Pairs and singularities}
We will use standard notions from the Minimal Model Program \cite{km98, BCHM}.
Here we collect some of the most fundamental definitions for clarification.
A \emph{pair} $(X, B)$ consists of a normal quasi-projective variety $X$ and an
$\RR$-divisor $B\ge 0$ such that $K_X + B$ is $\RR$-Cartier; in this case, we call $B$
the \emph{boundary divisor} (or just \emph{boundary}, for simplicity) of $(X,B)$.

For \emph{log resolutions}, see \cite[Definition 1.12]{Kol_singularities_of_MMP}.
Let $W\to X$ be a log resolution of a pair $(X,B)$, and let 
$K_W + B_W$ be the pullback of $K_X + B$.
Denote by $\coeff_D B_W$ the coefficient of $B_W$ at a prime divisor $D$ on $W$,
then the \emph{log discrepancy} of $D$ with respect to $(X, B)$
is defined as
\[ a(D, X, B) := 1-\coeff_D B_W. \]
We say that a pair $(X, B)$ is \emph{lc} (respectively, \emph{klt}, respectively, \emph{$\epsilon$-lc})
if $a(D, X, B)$ is $\ge 0$ (respectively, $>0$, respectively, $\ge \epsilon$) 
for every divisor $D$ on an arbitrary log resolution $W\to X$ of $(X, B)$.
Let $(X, B)$ be an lc pair.  An \emph{lc place} of $(X, B)$ is a prime divisor $D$ on 
some birational model of $X$ such that $a(D, X, B) = 0$.
An \emph{lc centre} is the closure of the image of an lc place in $X$.

For \emph{dlt pairs}, we refer the readers to \cite[\S 2.3]{km98}. 
For \emph{dlt models}, see \cite{KK10}; in particular, see \cite[Theorem 3.1]{KK10} 
for the existence of $\QQ$-factorial dlt models.


\subsection{$\text{B}$-divisors and generalised pairs}
We take the conventions of $\bdiv$-divisors and generalised pairs from
\cite[\S 2.3]{Corti-3-folds}, \cite{B-Zhang} and \cite[\S2.7, \S2.13]{B-Fano}.

Let $X$ be a variety.
A \emph{$\bdiv$-$\RR$-Cartier $\bdiv$-divisor over $X$} is the choice of a projective birational morphism
$Y\to X$ from a normal variety and an $\RR$-Cartier divisor $M$ on $Y$ up to the following equivalence:
another projective birational morphism $Y'\to X$ from a normal variety and an $\RR$-Cartier divisor $M'$
defines the same $\bdiv$-$\RR$-Cartier $\bdiv$-divisor if there is a common resolution $W\to Y$
and $W\to Y'$ on which the pullbacks of $M$ and $M'$ coincide.
Given an equivalence class of some $Y\to X$ and $M$ that represents a $\bdiv$-$\RR$-Cartier $\bdiv$-divisor over $X$,
we denote by $\mb{M}$ the equivalence class. 
We also say that $\mb{M}$ is a $\bdiv$-$\RR$-Cartier $\bdiv$-divisor over $X$; cf. \cite[2.3.8]{Corti-3-folds}.

Let $X$ be a variety, and let $\mb{M}$ be a $\bdiv$-$\RR$-Cartier $\bdiv$-divisor over $X$.
If $\mb{M}$ is represented by $Y\to X$ and an $\RR$-Cartier divisor $M$ on $Y$,
we say that \emph{$\mb{M}$ descends to $Y$}.  Moreover,
let $X'\to X$ be an arbitrary projective birational morphism from a normal variety $X'$.
Let $\pi\colon Y\to X'$ be a sufficiently high resolution so that $\mb{M}$ descends to $Y$, that is,
$\mb{M}$ is represented by $Y\to X$ and an $\RR$-Cartier divisor $M$ on $Y$.
The \emph{trace} of $\mb{M}$ on $X'$ is the $\RR$-divisor (cf. \cite[2.3.10]{Corti-3-folds})
\[ \mb{M}_{X'} := \tr_{X'} \mb{M} := \pi_* M.  \]
Note that $\mb{M}_{X'}$ is not necessarily $\RR$-Cartier.
On the other hand, let $X\dashrightarrow X''$ be an arbitrary birational map.
Let $Y\to X''$ and $Y\to X$ be a sufficiently high resolution of $X\dashrightarrow X''$
such that the $\bdiv$-$\RR$-Cartier $\bdiv$-divisor $\mb{M}$ over $X$ is represented by an $\RR$-Cartier
divisor $M$ on $Y$.  Then the equivalence class of $Y\to X''$ and $M$
also defines a $\bdiv$-$\RR$-Cartier $\bdiv$-divisor over $X''$,
which we also denote by $\mb{M}$.  
This should not lead to confusion in the context as a $\bdiv$-$\RR$-Cartier $\bdiv$-divisor
over $X$ can be viewed as an $\RR$-valued function on the set of all geometric valuations
of the function field $K(X)$, having finite support on some (hence any) birational model of $X$;
see \cite{Sh-3-fold-log-models} and \cite[\S 1.2]{Ambro04-Sh-boundary}.

A \emph{generalised pair} (or a \emph{g-pair} for short) 
$(X/Z, B + \mb{M})$ consists of a normal variety $X$ equipped with a projective morphism $X\to Z$
to a variety $Z$, an $\RR$-divisor $B \ge 0$ on $X$, and a $\bdiv$-$\RR$-Cartier $\bdiv$-divisor $\mb{M}$
over $X$ such that $\mb{M}$ descends to a nef$/Z$ $\RR$-divisor on some normal birational model of $X$
and that $K_X + B + \mb{M}_X$ is $\RR$-Cartier.  
When $Z$ is not relevant or $Z$ is $\spec \mbb K$, we usually drop it and do not mention it.

Let $(X, B + \mb{M})$ be a generalised pair.  Let $\phi\colon Y\to X$ be a sufficiently
high log resolution of $(X, B)$ so that $\mb{M}$ descends to $Y$.  We can write
\[ K_Y + B_Y + \mb{M}_Y = \phi^*(K_X + B + \mb{M}_X) \]
for some uniquely determined $\RR$-divisor $B_Y$ on $Y$.  For a prime divisor $D$ on $Y$,
the \emph{generalised log discrepancy} $a(D, X, B + \mb{M})$ is defined to be
\[ a(D, X, B + \mb{M}) := 1 - \coeff_D B_Y. \]
We say that $(X, B + \mb{M})$ is \emph{generalised lc} (respectively, \emph{generalised klt}, respectively,
\emph{generalised $\epsilon$-lc}) if for each prime divisor $D$ over $X$, the 
generalised log discrepancy $a(D, X, B + \mb{M})$ is $\ge 0$ (respectively, $>0$, respectively, $\ge \epsilon$).
For simplicity, we also say that $(X, B + \mb{M})$ is \emph{g-lc}
(respectively, \emph{g-klt}, respectively, \emph{g-$\epsilon$-lc}).

We adopt the notion of \emph{generalised dlt generalised pairs} (or \emph{g-dlt g-pairs} for short)
from \cite[\S 2.13 (2)]{B-Fano}.
In particular, for any g-lc g-pair $(X, B + \mb{M})$, \cite[Lemma 4.5]{B-Zhang} shows the existence of 
a projective birational morphism $\phi\colon Y\to X$ from a normal variety $Y$ such that
\begin{itemize}
    \item $Y$ is $\QQ$-factorial,
    \item $K_Y + B_Y + \mb{M}_Y = \phi^*(K_X + B + \mb{M}_X)$,
    \item each exceptional divisor of $\phi$ is a component of $\floor{B_Y}$, and
    \item $(Y, B_Y + \mb{M})$ is a g-dlt g-pair.
\end{itemize}
We call $(Y, B_Y + \mb{M})$ a \emph{$\QQ$-factorial g-dlt model} of $(X, B + \mb{M})$.
This is consistent with the notion of $\QQ$-factorial dlt model in \cite{KK10} when $\mb{M} = 0$.


\subsection{Generalised log Calabi-Yau fibrations}\label{g-log-CY-fib-preliminaries}

Recall that a \emph{generalised log Calabi-Yau fibration}
$(X, B + \mb{M})\to Z$ consists of a g-lc pair $(X, B + \mb{M})$ and a contraction $X\to Z$ of
quasi-projective varieties such that $K_X + B + \mb{M}_X \sim_{\RR} 0/Z$.

\begin{lemma}\label{CY-log-discre-no-change}
	Let $(X, B + \mb{M})\to Z$ be a generalised log Calabi-Yau fibration, and let $\phi\colon X\dashrightarrow Y$
	be a birational contraction$/Z$.  Denote by $B_Y$ the pushdown of $B$ to $Y$.
	Then for any divisor $E$ over $Y$, we have
	\[ a(E, X, B + \mb{M}) = a(E, Y, B_Y + \mb{M}). \]
	In particular, for any real number $\epsilon\ge 0$, if $(X, B + \mb{M})$ is g-$\epsilon$-lc,
	then $(Y, B_Y + \mb{M})$ is also g-$\epsilon$-lc.
\end{lemma}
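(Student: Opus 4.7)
The plan is to prove the equality of log discrepancies by exhibiting an auxiliary defect divisor on a common resolution and showing it vanishes via Shokurov's negativity lemma. I take a sufficiently high log resolution $W$ of $\phi\colon X\dashrightarrow Y$ with proper birational morphisms $p\colon W\to X$ and $q\colon W\to Y$ on which $\mb{M}$ descends to an $\RR$-Cartier divisor $\mb{M}_W$, and so that $E$ appears as a prime divisor on $W$. Set
\[ G := p^*(K_X+B+\mb{M}_X) - q^*(K_Y+B_Y+\mb{M}_Y), \]
where $K_Y + B_Y + \mb{M}_Y$ is $\RR$-Cartier (as implicit in the statement via $a(E, Y, B_Y + \mb{M})$). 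With compatible choices of canonical divisors, the defining identities of the log discrepancies give $\coeff_E G = a(E, Y, B_Y+\mb{M}) - a(E, X, B+\mb{M})$, so it suffices to show $G = 0$ as an $\RR$-Cartier divisor on $W$.

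The first step is to verify $q_*G = 0$. The birational contraction hypothesis is essential here: since $\phi^{-1}$ does not contract divisors, every $p$-exceptional prime divisor on $W$ is necessarily also $q$-exceptional (otherwise its image on $Y$ would be a prime divisor whose strict transform to $X$ would be contracted by $\phi$, contradicting that $\phi$ is a birational contraction). A careful coefficient comparison, using that $\phi_*B = B_Y$ and $q_*\mb{M}_W = \mb{M}_Y$, then yields $q_* p^*(K_X+B+\mb{M}_X) = K_Y+B_Y+\mb{M}_Y$; combined with $q_*q^*(K_Y+B_Y+\mb{M}_Y) = K_Y+B_Y+\mb{M}_Y$, this gives $q_*G = 0$. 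I expect this bookkeeping — ruling out ``ghost'' contributions from divisors that are $p$-exceptional but $q$-non-exceptional — to be the most delicate part of the argument, and the key place where the birational contraction hypothesis enters essentially.

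The second step is to verify that $G$ is $q$-numerically trivial. By the log Calabi-Yau hypothesis, $K_X+B+\mb{M}_X \sim_\RR \pi_X^*N$ for some $\RR$-Cartier divisor $N$ on $Z$, where $\pi_X\colon X\to Z$. Since $\pi_X\circ p = \pi_Y\circ q$ with $\pi_Y\colon Y\to Z$, pulling back to $W$ gives $p^*(K_X+B+\mb{M}_X) \sim_\RR q^*(\pi_Y^*N)$. For any curve $C$ contracted by $q$, the projection formula yields $q^*(\pi_Y^*N)\cdot C = 0$ and $q^*(K_Y+B_Y+\mb{M}_Y)\cdot C = 0$, so $G\cdot C = 0$. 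Hence both $G$ and $-G$ are $q$-nef.

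Finally, applying Shokurov's negativity lemma to the proper birational morphism $q\colon W\to Y$ with the divisor $G$ (and separately with $-G$), the conditions $q_*(\pm G) = 0 \ge 0$ together with $\mp G$ being $q$-nef force $\pm G \ge 0$, so $G = 0$. Comparing coefficients at $E$ gives $a(E,X,B+\mb{M}) = a(E,Y,B_Y+\mb{M})$. The $\epsilon$-lc assertion then follows immediately, since any prime divisor $D$ over $Y$ is also a prime divisor over $X$ via a common resolution, so $a(D, Y, B_Y + \mb{M}) = a(D, X, B+\mb{M}) \ge \epsilon$.
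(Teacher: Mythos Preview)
Your proof is correct and follows essentially the same approach as the paper: define the defect divisor $G$ on a common resolution, observe it is $q$-exceptional (since $\phi$ is a birational contraction) and numerically trivial over $Y$ (via the log Calabi-Yau condition), then conclude $G=0$ by the negativity lemma. The paper's version is terser---it states directly that $L$ is $q$-exceptional and numerically trivial over $Z$ (hence over $Y$), and also explicitly derives that $K_Y+B_Y+\mb{M}_Y$ is $\RR$-Cartier from $K_X+B+\mb{M}_X\sim_{\RR}0/Z$ rather than treating it as implicit---but the substance is the same.
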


\begin{proof}
	Let $p\colon W\to X$ and $q\colon W\to Y$ be a sufficiently high resolution of the birational contraction $\phi$
    so that $\mb{M}$ descends to a nef $\RR$-Cartier divisor on $W$.
    Note that $K_Y + B_Y + \mb{M}_Y$ is also $\RR$-Cartier since $K_X + B + \mb{M}_X\sim_{\RR}0/Z$.
	Write
	\[ L := p^*(K_X + B + \mb{M}_X) - q^*(K_Y + B_Y + \mb{M}_Y), \]
	which is $q$-exceptional as $\phi$ is a birational contraction.  Moreover, since
	\[ K_X + B + \mb{M}_X \sim_{\RR}0/Z \text{ and }K_Y + B_Y + \mb{M}_Y \sim_{\RR} 0/Z, \]
	the $\RR$-Cartier $\RR$-divisor $L$ is numerically trivial over $Z$, hence
	$L$ is also numerically trivial over $Y$.
	By negativity lemma, $L=0$.
\end{proof}


\subsection{Fano type varieties}\label{fano-type-defn}

Assume that $X$ is a variety and that $X\to Z$ is a contraction.
We say that $X$ is \emph{of Fano type over $Z$}
if there is a boundary $B$ such that $(X, B)$ is klt and
that $-(K_X + B)$ is ample over $Z$; cf. \cite[\S 2.10]{B-Fano}.
This is equivalent to the existence of a \emph{big}$/Z$ $\QQ$-divisor $\Gamma$ such that $(X, \Gamma)$
is klt and $K_X + \Gamma \sim_{\QQ} 0/Z$.
This is also equivalent to having a \emph{big}$/Z$ $\RR$-divisor $B$ such that
$(X, B + \mb{M})$ is a g-klt g-pair and $K_X + B + \mb{M}_X \sim_{\RR} 0/Z$;
see the proof of \cite[\S 2.13 (6)]{B-Fano}.
We call a generalised log Calabi-Yau fibration $(X, B+\mb{M})\to Z$ 
a \emph{Fano type generalised log Calabi-Yau fibration}
(or just a \emph{Fano type fibration} for short)
if $X$ is of Fano type over $Z$ in addition.

In this subsection, we collect some basic properties of Fano type varieties
that are used repeatedly throughout this paper.

\begin{lemma}\label{Fano-type-pushdown}
	Let $X\to Z$ be a contraction of varieties such that $X$ is of Fano type over $Z$,
	and let $\phi\colon X\dashrightarrow Y$ be a birational contraction.
	Then $Y$ is also of Fano type over $Z$.
\end{lemma}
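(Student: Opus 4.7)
The plan is to invoke the $\QQ$-divisor characterisation of Fano type stated in \S\ref{fano-type-defn}, push the Calabi--Yau boundary forward along $\phi$, and then read off the three required properties on $Y$ (bigness, klt, $\QQ$-linear triviality) from the preliminary results already proved in this section, most notably Lemma~\ref{CY-log-discre-no-change} applied to a log Calabi--Yau fibration with trivial nef part.

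More concretely, first I would invoke the equivalent characterisation to pick a big$/Z$ $\QQ$-divisor $\Gamma$ such that $(X, \Gamma)$ is klt and $K_X + \Gamma \sim_{\QQ} 0/Z$. In particular, $(X, \Gamma) \to Z$ is a (generalised) log Calabi--Yau fibration with $\mb{M}=0$. Note that the target $Y\to Z$ is automatically a contraction: on a common resolution $W$ with $p\colon W\to X$ and $q\colon W\to Y$, Zariski's main theorem gives $p_*\mcO_W = \mcO_X$ and $q_*\mcO_W=\mcO_Y$, so $(g\circ q)_*\mcO_W=(f\circ p)_*\mcO_W=f_*\mcO_X=\mcO_Z$ forces $g_*\mcO_Y=\mcO_Z$, where $f\colon X\to Z$ and $g\colon Y\to Z$ denote the structure morphisms.

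Next I would define $\Gamma_Y := \phi_*\Gamma$ and apply Lemma~\ref{CY-log-discre-no-change} (with $\mb{M}=0$) to the birational contraction $\phi\colon X\dashrightarrow Y$ over $Z$. This simultaneously gives that $K_Y + \Gamma_Y$ is $\QQ$-Cartier with $K_Y + \Gamma_Y \sim_{\QQ} 0/Z$ and that $a(E, Y, \Gamma_Y) = a(E, X, \Gamma)$ for every divisor $E$ over $Y$. Since $(X, \Gamma)$ is klt, every such log discrepancy is strictly positive, so $(Y, \Gamma_Y)$ is klt. Finally, Lemma~\ref{no-van-big-divisor} shows that $\Gamma_Y = \phi_*\Gamma$ is big$/Z$, as $\Gamma$ is big$/Z$ and $\phi$ is a birational contraction over $Z$.

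Putting these together, $(Y, \Gamma_Y)$ is a klt pair with $\Gamma_Y$ big$/Z$ and $K_Y + \Gamma_Y \sim_{\QQ} 0/Z$, so the reverse direction of the characterisation in \S\ref{fano-type-defn} identifies $Y$ as of Fano type over $Z$. The proof is a direct assembly of the two preliminary lemmas, so there is no real obstacle; the only point to be careful about is to verify that $Y\to Z$ is indeed a contraction (addressed above) and that the hypotheses of Lemma~\ref{CY-log-discre-no-change} are met, both of which are immediate.
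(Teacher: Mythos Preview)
Your proposal is correct and follows essentially the same approach as the paper's own proof: pick a big$/Z$ boundary $\Gamma$ with $(X,\Gamma)$ klt and $K_X+\Gamma$ trivial over $Z$, then apply Lemma~\ref{no-van-big-divisor} for bigness and Lemma~\ref{CY-log-discre-no-change} for the klt and triviality conditions on the pushforward. The paper uses an $\RR$-boundary while you use the $\QQ$-divisor characterisation, and you add the verification that $Y\to Z$ is a contraction, but these are cosmetic differences---the argument is the same.
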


\begin{proof}
    This is the relative version of \cite[Lemma 2.4]{birkar-sing-base-fano}.
	As $X$ is of Fano type over $Z$, there exists a big$/Z$ $\RR$-boundary $\Gamma$
	so that $(X, \Gamma)$ is klt and $K_X + \Gamma\sim_{\RR}0/Z$.
	Then the result follows immediately from Lemma~\ref{no-van-big-divisor} and Lemma~\ref{CY-log-discre-no-change}.
\end{proof}


\begin{lemma}\label{fano-type-at-dlt-model}
    Let $f\colon X\to Z$ be a contraction of varieties such that $X$ is of Fano type over $Z$.
    Let $(X, B + \mb{M})$ be a g-lc g-pair such that $-(K_X + B + \mb{M}_X)$ is nef$/Z$.
    Let $\pi\colon X'\to X$ be a birational contraction of normal varieties.  Write
    \[ K_{X'} + B' + \mb{M}_{X'} = \pi^*(K_X + B + \mb{M}_X). \]
    If every exceptional$/X$ irreducible component of $B'$ has positive coefficient, 
    then $X'$ is also of Fano type over $Z$.
\end{lemma}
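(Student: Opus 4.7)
The plan is to exhibit on $X'$ an effective big$/Z$ $\RR$-divisor $\Delta'$ and a b-divisor $\mb{M}_1$ such that $(X', \Delta' + \mb{M}_1)$ is g-klt and $K_{X'} + \Delta' + \mb{M}_{1, X'} \sim_\RR 0/Z$; Fano type for $X'$ then follows from the characterisations recalled in \S\ref{fano-type-defn}. Since $X$ is of Fano type over $Z$, fix a klt boundary $B_0$ on $X$ with $-(K_X + B_0)$ ample$/Z$.

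For each $\epsilon \in (0,1)$ set $B_1 := (1-\epsilon) B_0 + \epsilon B$ and $\mb{M}_1 := \epsilon\, \mb{M}$. On a common log resolution dominating $(X, B_0)$ and $(X, B)$ and to which $\mb{M}$ descends, linearity of the defining pullback equation yields
\[ a(E, X, B_1 + \mb{M}_1) \;=\; (1-\epsilon)\, a(E, X, B_0) + \epsilon\, a(E, X, B + \mb{M}) \]
for every prime divisor $E$ over $X$; since $a(E, X, B_0) > 0$ and $a(E, X, B + \mb{M}) \ge 0$, the pair $(X, B_1 + \mb{M}_1)$ is g-klt, while $-(K_X + B_1 + \mb{M}_{1, X})$, being the sum of an ample$/Z$ and a nef$/Z$ divisor, is ample$/Z$. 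Pulling back via $\pi$, $(B_1)_{X'} = (1-\epsilon)(B_0)_{X'} + \epsilon B'$. For a $\pi$-exceptional prime $E_i$ with $a(E_i, X, B_0) > 1$, the condition $\coeff_{E_i}(B_1)_{X'} \ge 0$ is equivalent to
\[ \epsilon \;\ge\; \frac{a(E_i, X, B_0) - 1}{a(E_i, X, B_0) - a(E_i, X, B + \mb{M})}, \]
and the right-hand side is strictly less than $1$ precisely because $\coeff_{E_i} B' > 0$ forces $a(E_i, X, B + \mb{M}) < 1$. As only finitely many exceptional divisors occur, I fix $\epsilon \in (0,1)$ close enough to $1$ that $(B_1)_{X'} \ge 0$; the g-klt property transfers to $(X', (B_1)_{X'} + \mb{M}_1)$ because log discrepancies are computed on any sufficiently high common model. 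Then $N := -(K_{X'} + (B_1)_{X'} + \mb{M}_{1, X'}) = \pi^*\bigl(-(K_X + B_1 + \mb{M}_{1, X})\bigr)$ is the pullback of an ample$/Z$ divisor by a birational morphism, hence nef$/Z$ and big$/Z$.

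To convert this into an $\sim_\RR 0/Z$ relation with an effective boundary, decompose $-(K_X + B_1 + \mb{M}_{1, X}) \sim_\RR \sum_j a_j A_j$ with each $A_j$ ample $\QQ$-Cartier on $X$ and $a_j > 0$; for each $j$ pick $m_j \gg 0$ and a general $H_j \in |m_j A_j|$, so that $H_j$ avoids the (codimension $\ge 2$) images of the $\pi$-exceptional divisors and $\pi^{-1}_* H_j$ is a prime divisor on $X'$ not meeting any g-klt place of $(X', (B_1)_{X'} + \mb{M}_1)$. Then
\[ A' \;:=\; \sum_j \tfrac{a_j}{m_j}\,\pi^{-1}_* H_j \;\sim_\RR\; \pi^*\bigl(-(K_X + B_1 + \mb{M}_{1, X})\bigr)/Z \]
is effective with arbitrarily small coefficients, so $\Delta' := (B_1)_{X'} + A'$ is effective, $(X', \Delta' + \mb{M}_1)$ is still g-klt (Bertini: $A'$ enters with small coefficients on prime divisors off the g-klt places), and $\Delta' \ge A'$ is big$/Z$ (being $\RR$-linearly equivalent to the pullback of an ample$/Z$ divisor). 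By construction $K_{X'} + \Delta' + \mb{M}_{1, X'} \sim_\RR 0/Z$, and the g-pair characterisation in \S\ref{fano-type-defn} gives that $X'$ is of Fano type over $Z$.

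The delicate point throughout is the tuning of $\epsilon$: one needs it close enough to $1$ that the positive exceptional coefficients of $B'$ absorb the potentially very negative coefficients of $(B_0)_{X'}$ at divisors with $a(E_i, X, B_0) \gg 1$, yet strictly less than $1$ to keep the perturbed pair g-klt. The hypothesis that every $\pi$-exceptional component of $B'$ has strictly positive coefficient is precisely what guarantees the required range of $\epsilon$ is non-empty.
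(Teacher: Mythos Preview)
Your argument is correct and is precisely the standard proof: the paper itself provides no argument but refers the reader to \cite[\S 2.13 (7)]{B-Fano}, whose proof you have essentially reproduced in the relative setting. One small wording issue: ``g-klt place'' is not meaningful (g-klt pairs have no lc places) --- what you mean is that the general $H_j$ can be chosen so that adding $A'$ with small coefficients preserves the g-klt condition, and to ensure $|m_j A_j|\neq\emptyset$ you may first twist each $A_j$ by the pullback of a sufficiently ample divisor from $Z$, which is harmless modulo $\sim_\RR/Z$.
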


\begin{proof}
    This is the relative version of \cite[\S 2.13 (7)]{B-Fano} to which 
    we refer the readers for the proof of this result.
\end{proof}


\begin{lemma}\label{fano-type-upper}
    Let $X\to Y$ and $Y\to Z$ be contractions of normal varieties.
    If $X$ is of Fano type over $Z$, then $X$ is also of Fano type over $Y$.
\end{lemma}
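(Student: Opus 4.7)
The plan is to use the first characterization of Fano type given in Section~\ref{fano-type-defn}: $X$ is of Fano type over $Z$ if and only if there exists a boundary $B$ with $(X,B)$ klt and $-(K_X+B)$ ample$/Z$. Since klt-ness and effectiveness of $B$ are intrinsic to the pair $(X,B)$ and do not reference any base, the only content to verify is that $-(K_X+B)$, being ample relative to $Z$, is automatically ample relative to the intermediate contraction $Y$.

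This is a standard fact about a tower of projective morphisms $X \xrightarrow{g} Y \xrightarrow{h} Z$. I would invoke the fibrewise ampleness criterion for proper morphisms: an $\RR$-Cartier $\RR$-divisor on $X$ is ample$/Z$ iff its restriction to every scheme-theoretic fibre of $h\circ g$ is ample, and similarly for $g$ in place of $h\circ g$. Given any $y \in Y$, the fibre $g^{-1}(y)$ is a closed subscheme of the fibre $(h\circ g)^{-1}(h(y))$, so ampleness of $-(K_X+B)$ on the latter restricts to ampleness on the former. Hence $-(K_X+B)$ is ample$/Y$. Equivalently, Kleiman's relative criterion applies: every curve contracted by $g$ is contracted by $h\circ g$, so positivity transfers from nefness$/Z$ (and ampleness$/Z$) to nefness$/Y$ (respectively ampleness$/Y$).

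Combining the two observations, the same pair $(X,B)$ realizes $X$ as a Fano type variety over $Y$. No MMP, no modification of the boundary, and no appeal to Lemma~\ref{Fano-type-pushdown} or Lemma~\ref{fano-type-at-dlt-model} is needed. The only mild subtlety, and the main obstacle if any, is passing from the usual statement for $\ZZ$- or $\QQ$-Cartier ample divisors to the $\RR$-Cartier setting; this is handled routinely by writing $-(K_X+B)$ as a finite positive $\RR$-linear combination of $Z$-ample $\QQ$-Cartier divisors and applying the fibrewise criterion term by term.
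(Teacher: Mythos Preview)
Your proof is correct but takes a different route from the paper. You use the first characterization of Fano type from \S\ref{fano-type-defn}, namely the existence of a klt boundary $B$ with $-(K_X+B)$ ample over the base, and then invoke the standard fact that a divisor ample$/Z$ remains ample$/Y$ when $X\to Z$ factors through $X\to Y$. The paper instead uses the second characterization: it takes a big$/Z$ $\QQ$-divisor $\Gamma$ with $(X,\Gamma)$ klt and $K_X+\Gamma\sim_\QQ 0/Z$, cites Nakayama for the fact that big$/Z$ implies big$/Y$, and observes that $\sim_\QQ 0/Z$ trivially passes to $\sim_\QQ 0/Y$. Both arguments are one line once the relevant transfer principle is identified; yours is arguably the more self-contained of the two, since the ampleness transfer follows immediately from the cone inclusion $\overline{NE}(X/Y)\subseteq\overline{NE}(X/Z)$ (or from comparing closed fibres), whereas the bigness transfer needs an external reference or a short argument about restricting to general fibres of $Y\to Z$.
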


\begin{proof}
    As $X$ is of Fano type over $Z$, there is a big$/Z$ $\QQ$-divisor $\Gamma$ such that
    $(X, \Gamma)$ is klt and $K_X + \Gamma\sim_{\QQ}0/Z$.
    By \cite[Remark, page 69]{nakayama_Zariski_decomposition},
    $\Gamma$ is also big$/Y$, hence $X$ is also of Fano type over $Y$ as $K_X + \Gamma \sim_{\QQ}0/Y$.
\end{proof}


The following lemma is well-known to experts, which is the relative version of \cite[Lemma 2.12]{B-Fano}.
For the lack of references in literature,
we sketch the proof here to indicate the necessary modifications on the original proof 
of \cite[Lemma 2.12]{B-Fano}.  The difference is that for a contraction $X\to Z$ of \emph{quasi-projective}
varieties, the result \cite[Theorem 0.2]{Ambro-lc-fibration} can not be applied directly.

\begin{lemma}[cf. \protect{\cite[Lemma 2.12]{B-Fano}}]\label{fano-type-on-base}
    Let $f\colon X\to Y$ be a contraction$/Z$ of quasi-projective varieties $X, Y$
    that are projective over a variety $Z$,
    where $\dim (Y/Z)>0$.  Assume that $X$ is of Fano type over $Z$.
    Then $Y$ is also of Fano type over $Z$.
\end{lemma}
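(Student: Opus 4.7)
The plan is to adapt the projective proof \cite[Lemma 2.12]{B-Fano} to the quasi-projective setting by compactifying $Z$. If $f$ is birational, then $Y$ is of Fano type over $Z$ immediately by Lemma~\ref{Fano-type-pushdown}, so assume $\dim(X/Y)>0$. Since $X$ is of Fano type over $Z$, choose a klt boundary $\Gamma_0$ with $-(K_X+\Gamma_0)$ ample$/Z$, and pick a general effective $\QQ$-divisor $H\sim_{\QQ}-(K_X+\Gamma_0)$ over $Z$. Setting $\Gamma:=\Gamma_0+H$, we obtain $(X,\Gamma)$ klt with $K_X+\Gamma\sim_{\QQ}0/Z$ and $\Gamma\ge H$ containing an ample$/Z$ component; in particular $\Gamma$ is big$/Z$ and its restriction to a general fibre of $f\colon X\to Y$ is big.

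Next, apply adjunction for fibre spaces (the Ambro--Kawamata canonical bundle formula) to $f$, obtaining an effective $\QQ$-divisor $B_Y$ and a $\bdiv$-$\QQ$-Cartier $\bdiv$-divisor $\mb{N}$ over $Y$ such that $(Y,B_Y+\mb{N})$ is a g-klt g-pair, $\mb{N}$ is nef$/Z$, and
\[
K_X+\Gamma\sim_{\QQ}f^*(K_Y+B_Y+\mb{N}_Y);
\]
hence $K_Y+B_Y+\mb{N}_Y\sim_{\QQ}0/Z$. The bigness of $H$ on the generic fibre of $f$ forces the moduli part $\mb{N}$ to be big$/Z$. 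Passing to a sufficiently high birational model $\pi\colon Y'\to Y$ on which $\mb{N}$ descends to a nef and big$/Z$ $\QQ$-divisor $N'$, we may write $N'\sim_{\QQ}A'+E'$ over $Z$ with $A'$ ample$/Z$ and $E'\ge 0$. Defining $B_{Y'}$ by $K_{Y'}+B_{Y'}+N'=\pi^*(K_Y+B_Y+\mb{N}_Y)$, the $\QQ$-divisor
\[
\Gamma_{Y'}:=B_{Y'}+(1-\delta)N'+\delta(A'+E')
\]
is, for sufficiently small $\delta>0$, a klt boundary on $Y'$ with $\Gamma_{Y'}\ge\delta A'$ big$/Z$ and $K_{Y'}+\Gamma_{Y'}\sim_{\QQ}0/Z$. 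Thus $Y'$ is of Fano type over $Z$, and so is $Y$ by Lemma~\ref{Fano-type-pushdown} applied to the birational contraction $\pi\colon Y'\to Y$.

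The main obstacle is that \cite[Theorem 0.2]{Ambro-lc-fibration} is stated for projective morphisms between projective varieties, whereas here $Z$ is only quasi-projective. The workaround is compactification: embed $Z\hookrightarrow\bar Z$ into a projective compactification, extend $X\to Z$ and $Y\to Z$ to projective morphisms $\bar X\to\bar Z$ and $\bar Y\to\bar Z$ of projective varieties (via Nagata compactification and resolution of the resulting indeterminacies), and extend $f$ to $\bar f\colon\bar X\to\bar Y$. Extending $(X,\Gamma)$ to a klt pair $(\bar X,\bar\Gamma)$ by suitably adjoining boundary supported on $\bar X\setminus X$, the canonical bundle formula applies to $\bar f$, and restricting the resulting g-klt structure on $\bar Y$ back to the open subset $Y\subset\bar Y$ yields the structure used above. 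The bigness of $\mb{N}$ over $Z$ is inherited from its bigness on the generic fibre of $\bar f$, which in turn follows from the ample$/Z$ contribution $H$ in $\Gamma$.
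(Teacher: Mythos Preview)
Your argument has a genuine gap at the crucial step: the assertion that ``the bigness of $H$ on the generic fibre of $f$ forces the moduli part $\mb{N}$ to be big$/Z$'' is false. The moduli $\bdiv$-divisor in the canonical bundle formula records the \emph{variation} of the log fibres, not the size of the boundary on them. For a concrete counterexample take $Z=\spec\mbb K$, $Y$ any Fano variety, $X=Y\times\PP^1$ with $f$ the first projection, and $\Gamma=p_1^*\Gamma_Y+p_2^*\Gamma_{\PP^1}$ for general klt boundaries $\Gamma_Y\in|-K_Y|_{\QQ}$ and $\Gamma_{\PP^1}\in|-K_{\PP^1}|_{\QQ}$. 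Then $(X,\Gamma)$ is klt, $K_X+\Gamma\sim_{\QQ}0$, and $\Gamma$ is ample on every fibre of $f$; yet the log fibres $(\PP^1,\Gamma_{\PP^1})$ form a constant family, so $\mb{N}$ is numerically trivial, certainly not big. Your subsequent construction of $\Gamma_{Y'}$ therefore collapses. (There is also a secondary issue: the crepant pullback $B_{Y'}$ may have negative coefficients along $\pi$-exceptional divisors, so $\Gamma_{Y'}$ need not be effective, though this could be repaired.)

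The paper avoids this problem by a different mechanism. Rather than hoping the moduli part carries bigness, it \emph{strips off} an ample$/Z$ divisor pulled back from $Y$ before applying the canonical bundle formula: writing $\Gamma\ge f^*A$ with $A$ ample$/Z$ on $Y$ and $\Delta:=\Gamma-f^*A$ still big$/Y$, one has $K_X+\Delta\sim_{\QQ}0/Y$. After a suitable projective compactification of the pair $(X,\Delta)\to Y$ (via \cite[Lemma 3.3]{birkar-sing-base-fano}, which is more precise than an ad hoc Nagata-plus-resolution), Ambro's theorem yields an honest klt boundary $\Delta_Y$ on $Y$ with $K_X+\Delta\sim_{\QQ}f^*(K_Y+\Delta_Y)$. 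Adding back a general $A'\sim_{\QQ}A$ then gives the klt pair $(Y,\Delta_Y+A')$ with $K_Y+\Delta_Y+A'\sim_{\QQ}0/Z$ and $\Delta_Y+A'$ big$/Z$. The bigness on the base thus comes from $A$, not from the moduli part.
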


\begin{proof}
    By Lemma~\ref{fano-type-at-dlt-model}, we can assume that 
    $X$ is $\QQ$-factorial.  As $X$ is of Fano type over $Z$,
    there is a big$/Z$ $\QQ$-divisor $\Gamma$ such that $(X, \Gamma)$ is klt
    and $K_X + \Gamma \sim_{\QQ} 0/Z$.  Since $\Gamma$
    is big$/Z$, we can assume that there is an ample$/Z$ $\QQ$-divisor $H\ge 0$ on $X$ 
    and an ample$/Z$ $\QQ$-divisor $A\ge 0$ on $Y$ such that
    $\Gamma \ge H \ge f^* A \ge 0$ and that $\Gamma - f^* A$ is big$/Z$, hence big$/Y$.
    Set $\Delta := \Gamma - f^* A$.
    By \cite[Lemma 3.3]{birkar-sing-base-fano}, there exist normal projective varieties $X\subseteq X'$ 
    and $Y\subseteq Y'$ and a contraction $f'\colon X'\to Y'$ such that 
    \begin{itemize}
        \item $(X', \Delta')$ is $\QQ$-factorial klt and $K_{X'} + \Delta' \sim_{\QQ} 0/Y'$,
        \item $(K_{X'} + \Delta')|_X = K_X + \Delta$, and $f'|_X = f$.
    \end{itemize}
    By \cite[Theorem 0.2]{Ambro-lc-fibration}, there is a $\QQ$-divisor $\Delta_{Y'}$ 
    such that $K_{X'} + \Delta' \sim_{\QQ} (f')^*(K_{Y'} + \Delta_{Y'})$ and that $(Y', \Delta_{Y'})$ is klt.
    Denote by $\Delta_Y$ the restriction of $\Delta_{Y'}$ to $Y$.
    Then we have $K_X + \Delta \sim_{\QQ} f^*(K_Y + \Delta_Y)$ and $(Y, \Delta_Y)$ is klt.
    Let $\Gamma_Y := \Delta_Y + A'$, where $0\le A'\sim_{\QQ} A$ is general, then
    $(Y, \Gamma_Y)$ is also klt and $\Gamma_Y$ is big$/Z$.
    Since $K_Y + \Gamma_Y \sim_{\QQ}0/Z$, we see that $Y$ is also of Fano type over $Z$.
\end{proof}


The following result is a direct consequence of the (generalised) adjunction formula,
we state it specifically as a lemma since it is used repetitively throughout this paper.

\begin{lemma}\label{fano-type-induction}
    Let $(X, B + \mb{M})\to Z$ be a Fano type generalised log Calabi-Yau fibration.
    Let $L$ be a general Cartier hyperplane section of $Z$.  
    Denote by $X_L$ the fibre product $L\times_Z X$.  Then we can write
    \begin{equation}
        K_{X_L} + B_L + \mb{N}_{X_L} \sim_{\RR} (K_X + B + X_L + \mb{M}_X)|_{X_L}, \label{fano-type-adjunction}
    \end{equation}
    where $(X_L, B_L + \mb{N})\to L$ is also a Fano type generalised log Calabi-Yau fibration.
    Moreover, let $S$ be an irreducible component of $B$ whose image $T$ in $Z$ satisfies that
    \begin{itemize}
        \item $\dim T = \dim Z$ if $\dim Z \ge 2$, or
        \item $\dim T = \dim Z - 1$ if $\dim Z \ge 3$.
    \end{itemize}
    Assume that $\coeff_S B\ge t$ for some $t\in \RR^{>0}$,
    and set $S_L := L\times_Z S$.  Then $S_L$ is also an irreducible component of $B_L$
    satisfying $\coeff_{S_L} B_L \ge t$.
\end{lemma}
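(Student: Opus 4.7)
The plan is to combine Bertini-type theorems with adjunction for a general Cartier hyperplane section. First I would fix a projective embedding $Z\hookrightarrow \PP^N$ and take $L$ to be a sufficiently general hyperplane section of $Z$ on which the usual Bertini package holds: $L$ is smooth and irreducible; the pullback $X_L = f^{-1}(L)$ (where $f\colon X\to Z$) is normal and integral; $X_L$ is a Cartier divisor on $X$ linearly equivalent to $f^*L$; no irreducible component of $B$ is contained in $X_L$; and $X_L$ does not meet the non-klt locus of $(X, B + \mb{M})$ in any unexpected way. These conditions all hold on a dense open subset of the parameter space of hyperplanes.

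The adjunction step is then routine. Set $B_L := B|_{X_L}$, and define $\mb{N}$ by pulling $\mb{M}$ back to a birational model $Y\to X$ on which it descends to a nef$/Z$ $\RR$-Cartier divisor $M_Y$, then restricting $M_Y$ to the strict transform of $X_L$ in $Y$; the resulting trace is nef$/L$. Cartier adjunction $K_{X_L} = (K_X + X_L)|_{X_L}$ then yields
\[ K_{X_L} + B_L + \mb{N}_{X_L} \sim_{\RR} (K_X + B + X_L + \mb{M}_X)|_{X_L}. \]
Using $K_X + B + \mb{M}_X \sim_{\RR} 0/Z$, the right-hand side is $\sim_{\RR} X_L|_{X_L} = (f|_{X_L})^*(L|_L) \sim_{\RR} 0/L$, which gives the log Calabi-Yau relation over $L$; the g-lc property descends from $(X, B + \mb{M})$ to $(X_L, B_L + \mb{N})$ by the standard Bertini-for-singularities theorem adapted to the generalised setting. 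For preservation of Fano type, I would take a big$/Z$ $\QQ$-boundary $\Gamma$ with $(X, \Gamma)$ klt and $K_X + \Gamma \sim_{\QQ} 0/Z$; then for general $L$ the pair $(X_L, \Gamma|_{X_L})$ is klt (Bertini), $\Gamma|_{X_L}$ is big$/L$, and the same adjunction computation gives $K_{X_L} + \Gamma|_{X_L} \sim_{\QQ} 0/L$, so $X_L$ is of Fano type over $L$.

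The main point requiring attention is the irreducibility of $S_L$ under the dimension hypothesis on $T$. Both listed cases force $\dim T \ge 2$, and since $L$ is general we have $T\not\subset L$, so $L\cap T$ is a proper nonempty closed subvariety of $T$ of dimension $\dim T - 1 \ge 1$, and the scheme-theoretic fibre product $S_L = L\times_Z S \cong S\times_T (L\cap T)$ has the expected dimension $\dim S - 1$. Applying Bertini's irreducibility theorem (in the Jouanolou form) to the composition $S\to T\hookrightarrow \PP^N$, whose image has dimension $\ge 2$, the preimage in $S$ of a general hyperplane section is irreducible; hence $S_L$ is an irreducible divisor on $X_L$. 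Since $L$ is general, the coefficient of $S$ in $B$ is preserved under restriction, so $S_L$ occurs as an irreducible component of $B_L$ with $\coeff_{S_L} B_L = \coeff_S B \ge t$. The only obstacle in all of the above is bookkeeping: verifying each piece of the standard Bertini/adjunction package in the b-divisor and Fano-type settings, none of which is deep but which must be assembled carefully.
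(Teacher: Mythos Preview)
Your proposal is correct and follows essentially the same approach as the paper: Bertini-type theorems for normality and irreducibility, generalised adjunction (which the paper invokes via \cite[Definition 4.7]{B-Zhang}), the identical Fano type argument via a big$/Z$ klt boundary $\Gamma$, and Jouanolou's irreducibility theorem applied to $S\to Z\hookrightarrow\PP^N$ using $\dim T\ge 2$. The only minor slip is claiming $L$ is smooth---$Z$ is merely normal, so Bertini gives $L$ normal, not smooth---but this is irrelevant to the argument; and your equality $\coeff_{S_L}B_L=\coeff_S B$ is in fact stronger than the paper's stated inequality (and correct for a general Cartier section, where the different vanishes).
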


\begin{proof}
    By Bertini's theorem for normality (see \cite[Corollary 3.4.9]{normal-bertini}),
    both $L$ and $X_L$ are also normal varieties.
    Moreover, for a general $L$, $(X, B + X_L + \mb{M})$ is also g-lc, hence by generalised adjunction
    (see \cite[Definition 4.7]{B-Zhang}), we can write \eqref{fano-type-adjunction},
    where $(X_L, B_L + \mb{N}) \to L$ is also a generalised log Calabi-Yau fibration.
    
    On the other hand, as $X$ is of Fano type over $Z$, there is a big$/Z$ $\QQ$-divisor $\Gamma$
    such that $(X, \Gamma)$ is klt and $K_X + \Gamma \sim_{\QQ}0/Z$.  For a general $L$, we can write
    \[ K_{X_L} + \Gamma_L \sim_{\QQ} (K_X + \Gamma + X_L)|_{X_L}, \]
    where $(X_L, \Gamma_L)$ is also a klt pair (by \cite[Theorem 5.50]{km98}) 
    satisfying that $K_{X_L} + \Gamma_L\sim_{\QQ}0/L$.
    It is clear that $\Gamma_L$ is big$/L$ by \cite[Corollary II.5.17]{nakayama_Zariski_decomposition}.
    Thus, $X_L$ is also of Fano type over $L$.
    Now pick an irreducible component $S$ of $B$ satisfying all the assumptions.
    Then $L\times_Z S$ is also reduced and irreducible by \cite[Th\'eor\`eme 6.10]{jou-bertini}
    and \cite[Corollary 3.4.9]{normal-bertini}.  Finally, the coefficient of $S_L$ 
    in $B_L$ is $\ge t$ by \cite[Definition 4.7]{B-Zhang}.
\end{proof}


\subsection{Minimal models, Mori fibre spaces, and MMP}\label{pre-MMP}

Let $X\to Z$ be a projective morphism of normal varieties, and
let $D$ be an $\RR$-Cartier $\RR$-divisor on $X$.
Let $Y$ be a normal variety, projective over $Z$, and let
$\phi\colon X\dashrightarrow Y/Z$ be a birational contraction.
Assume that $D_Y := \phi_* D$ is also $\RR$-Cartier and that there is a common resolution
$g\colon W\to X$ and $h\colon W\to Y$ such that $E := g^* D - h^* D_Y$ is effective 
and exceptional$/Y$, and $\supp g_* E$ contains all the exceptional divisors of $\phi$.

Under the above assumptions, we call $Y$ a \emph{minimal model} of $D$ over $Z$ 
if $D_Y$ is nef$/Z$;  
we call $Y$ a \emph{good minimal model} of $D$ over $Z$ if $D_Y$ is semiample$/Z$.
On the other hand, we call $Y$ a \emph{Mori fibre space}
of $D$ over $Z$ if there is an extremal contraction $Y\to T/Z$ with
$-D_Y$ ample$/T$ and $\dim Y > \dim T$; cf. \cite[Definition 3.10.7]{BCHM}.

If one can run a \emph{Minimal Model Program} (MMP) on $D$ over $Z$ which terminates
with a model $Y$, then $Y$ is either a minimal model or a Mori fibre space of $D$ over $Z$.
If $X$ is a Mori dream space (see \cite[Definition 1.3.1]{BCHM}), then such an MMP
always exists by \cite{BCHM}.
In particular, if $X$ is of Fano type over $Z$, for any given $\RR$-Cartier $\RR$-divisor $D$,
one can run the $D$-MMP over $Z$, and this ends with either a Mori fibre space of $D$ if
$D$ is non-pseudo-effective$/Z$ by \cite[Corollary 1.3.3]{BCHM} or with a good minimal model of $D$ if
$D$ is pseudo-effective$/Z$ by \cite[Theorem 1.2, Lemma 3.9.3]{BCHM}.
We will freely use these results without specifying citations from \cite{BCHM}
in the rest of this paper.


\subsection{Complements}\label{complements}

We define complements as in \cite{Sh-surface}; see also \cite[\S 2.18]{B-Fano}.
Let $(X, B)$ be a pair, and let $X\to Z$ be a contraction. 
Let $T=\floor {B}$ and $\Delta = B- T$,
and let $n\in \NN$ be a natural number.
An \emph{$n$-complement} of $K_X + B$ over a point $z\in Z$ is of the form $K_X + B^+$
such that over some neighbourhood of $z$ we have the following properties:
\begin{itemize}
	\item $(X, B^+)$ is lc,
	\item $n(K_X + B^+)\sim 0$, and
	\item $nB^+ \ge nT+ \floor{(n+1)\Delta}$.
\end{itemize}
In particular, $n B^+$ is an integral divisor.  
An $n$-complement is \emph{monotonic} if $B^+\ge B$.


\subsection{Couples}\label{couples-defn}

A \emph{couple} $(X,D)$ consists of a quasi-projective variety $X$ and a reduced Weil divisor $D$ on $X$.
This is more general than the definition given in \cite[\S 2.19]{B-Fano} because we are not assuming $X$ to be normal 
nor projective.   We say that a couple $(X, D)$ is \emph{projective} if the underlying variety $X$ 
is projective over $\spec \mbb K$.
Also note that a couple $(X,D)$ is not necessarily a pair in the sense that we are not assuming 
$K_X+D$ to be $\RR$-Cartier. We often consider a couple $(X,D)$ equipped with a 
\emph{surjective} projective morphism to a variety $Z$
in which case we often denote the couple as $(X/Z,D)$ or $(X, D)\to Z$. 
We say that a couple $(X/Z,D)$ is \emph{flat} if both $X\to Z$ and $D\to Z$ are flat.

A \emph{morphism} $(Z,E)\to (V,C)$ between couples is a morphism $f\colon Z\to V$ of varieties such that 
$f^{-1}(C)\subseteq E$. 


\subsection{Strata and log smooth morphisms}\label{log-smooth}

We follow the conventions as in \cite[\S 2.1]{HMX14}.
Let $(X, D)$ be a couple.  The \emph{strata} of $(X, D)$ are the irreducible components of the intersections
\[ D_I = \bigcap_{j\in I} D_j = D_{i_1}\cap \cdots \cap D_{i_r} \]
of irreducible components of $D$, where $I = \Set{ i_1, \dots, i_r}$ is a subset of the indices, including the empty
intersection $X = D_{\emptyset}$.  
Every irreducible component in the strata of $(X, D)$ is called a \emph{stratum} of $(X, D)$.
If $(X, B)$ is a pair, then the \emph{strata} of $(X, B)$ are the strata of the underlying couple $(X, D)$,
where $\supp D = \supp B$.

For \emph{simple normal crossing} (or \emph{snc} for short) for pairs or divisors, see \cite[\S 1.1]{Kol_singularities_of_MMP}.
If we are given a couple $(X, D)$ over a variety $T$, then we say that $(X, D)$ is \emph{log smooth over $T$}
if $(X, D)$ has simple normal crossings and the strata of $(X, D)$ are smooth over $T$.
If $(X, B)$ is a pair over a variety $U$, we say that $(X, B)$ is \emph{log smooth over $U$}
if the underlying couple $(X, D)$ is log smooth over $U$, where $\supp D = \supp B$.
We say that a couple (similarly, a pair) is \emph{log smooth} if it is log smooth over $\spec \mbb K$.


\subsection{Bounded families of couples}\label{b-bnd-couples}

We say that a set $\mc{Q}$ of projective couples $(X, D)$ is \emph{bounded}
if there exist finitely many projective morphisms $V_i\to T_i$ 
of varieties and reduced divisors $C_i$ on $V_i$ such that, for each
$(X, D)\in \mc{Q}$, there exist an $i$, a closed point $t\in T_i$,
and an isomorphism of couples $\phi\colon (V_{i,t}, C_{i, t})\to (X, D)$,
where $V_{i, t}$ and $C_{i, t}$ are the fibres over $t$ of the morphisms $V_i\to T_i$ and $C_i\to T_i$ respectively.
In particular, if $D=0$ for every $(X, D)\in \mc{Q}$, we say that the family $\mc{Q}$ consisting of projective varieties
is \emph{bounded}.

Let $\mc{Q}$ be a bounded family of projective couples,
and let $(X, D)\in \mc{Q}$.
When there is no confusion in the context, we usually say that $(X, D)$
belongs to a bounded family of projective couples, or 
just $(X, D)$ is bounded.


\subsection{Relatively bounded families of couples}\label{r-bnd-def}

Let $f\colon X\to Z$ be a \emph{surjective} projective morphism of varieties,
and let $A$ be a $\QQ$-Cartier divisor on $X$. 
For a Weil divisor $D$ on $X$, we define the \emph{relative degree} of $D$ over $Z$ with respect to $A$ as 
\[ \deg_{A/Z}D:=(D|_F)\cdot (A|_F)^{n-1}, \] 
where $F$ is a general fibre of $f$ and $n=\dim F$. 
It is clear that this is a generic degree, so the vertical$/Z$ irreducible components of $D$ do not contribute 
to the relative degree.  Note that $F$ may not be irreducible.

We define generically relatively bounded families of couples (respectively, varieties) as in \cite[\S 3]{birkar2023singularities}.
Let $\mathcal{P}$ be a family of couples $(X/Z, D)$. We say $\mathcal{P}$ is \emph{generically relatively bounded} if 
there exist natural numbers $d, r$ 
such that for each $(X/Z,D)\in \mathcal{P}$, we have the following:
$\dim X -\dim Z \le d$, and there is a very ample$/Z$ 
divisor $A$ on $X$ such that 
\[ \deg_{A/Z}A\le r \text{ and } \deg_{A/Z}D\le r. \]
If in addition all the $(X/Z, D)\in \mcP$ are flat, we say that $\mcP$ is \emph{relatively bounded}. 
When $D=0$ for every $(X/Z,D)\in \mathcal{P}$, we then refer to $\mathcal{P}$ 
as a generically relatively bounded (respectively, relatively bounded) family of varieties.

Let $\mcP$ be a generically relatively bounded family of couples. 
Assume that $Z$ is a normal variety for each $(X/Z, D)\in \mcP$.  
By \cite[Proposition III.9.7]{Hart}, up to replacing $Z$ with an open subset
whose complement in $Z$ has codimension $\ge 2$, we see that $(X/Z, D)$ is flat.
Then replacing $Z$ for each $(X/Z, D)\in \mcP$ in this way, we can assume that
$\mcP$ is relatively bounded.

As for bounded set of projective couples (see \cite[Lemma 2.21]{B-Fano}), 
there is a universal family of 
varieties and divisors for a relatively bounded family of couples 
(up to shrinking the base of every couple to an open subset containing all the codimension one points).


\begin{lemma}[cf. \protect{\cite[Lemma 3.4]{birkar2023singularities}}]\label{bir-lem-on-Pn}
    Let $\mc{P}$ be a generically relatively bounded family of couples $(X/Z, D)$, where $Z$ 
    is a normal variety.  Then there is a natural number $n\in \NN$
    depending only on $\mc{P}$ such that for each $(X/Z, D)\in \mc{P}$ and each codimension one point $z\in Z$,
    perhaps after shrinking $Z$ around $z$, the morphism $X\to Z$ factors as a closed immersion
    $X\to \PP_Z^n$ followed by the canonical projection $\PP_Z^n\to Z$.
\end{lemma}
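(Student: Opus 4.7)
The plan is to produce the desired closed immersion from the uniform very ample relative divisor that generic relative boundedness of $\mcP$ supplies. By definition there exist $d, r \in \NN$ depending only on $\mcP$ such that, for each $(X/Z, D) \in \mcP$, one has $\dim(X/Z) \le d$ and there is a very ample$/Z$ divisor $A$ on $X$ with $\deg_{A/Z} A \le r$. First I would reduce to the flat case: since $Z$ is normal, \cite[Proposition III.9.7]{Hart} (as invoked in the paragraph preceding the lemma) lets one remove a closed subset of $Z$ of codimension $\ge 2$ so that $X \to Z$ becomes flat. As the given codimension one point $z$ is not in this subset, removing it is a legitimate shrinking of $Z$ around $z$, and we may henceforth assume $X \to Z$ is flat.

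Next I would establish a uniform bound on the cohomology of the fibre over $z$. By flatness, $F_z = X \times_Z \spec \kappa(z)$ is a projective $\kappa(z)$-scheme of dimension $\le d$, and $A|_{F_z}$ is a very ample divisor with $(A|_{F_z})^{\dim F_z} = \deg_{A/Z} A \le r$. Projective schemes of bounded dimension polarised by a very ample line bundle of bounded degree have Hilbert polynomials lying in a finite set depending only on $d, r$ (a standard consequence of the finiteness of Hilbert polynomials of closed subschemes of a common projective space with bounded degree and dimension). Hence there exist $m, n \in \NN$ depending only on $d, r$ such that, after replacing $A$ by $mA$ (which remains relatively very ample), one has $h^0(F_z, \mcO_{F_z}(A)) \le n + 1$ and $h^i(F_z, \mcO_{F_z}(A)) = 0$ for every $i \ge 1$; the vanishing of higher cohomology is guaranteed by a uniform Castelnuovo--Mumford regularity bound applicable to families with bounded Hilbert polynomial.

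Finally, I would globalise these pointwise conclusions. By cohomology and base change applied to the flat projective morphism $X \to Z$, the vanishing of $h^i$ on $F_z$ for $i \ge 1$ propagates to $R^i f_* \mcO_X(A) = 0$ in a Zariski neighborhood $Z'$ of $z$, and $f_* \mcO_X(A)$ is locally free of rank $n + 1$ on $Z'$ with formation compatible with arbitrary base change. Shrinking $Z'$ further to trivialise $f_* \mcO_X(A)$ and using that $A$ is very ample$/Z$, the evaluation map $f^* f_* \mcO_X(A) \to \mcO_X(A)$ induces a morphism $X|_{Z'} \to \PP_{Z'}^n$ which is a closed immersion over $Z'$, yielding the required factorisation. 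The main obstacle I expect is obtaining the uniform choice of $m$ and $n$ across the whole family $\mcP$ rather than for a single element; this is overcome by extracting the Hilbert polynomial bound and the Castelnuovo--Mumford regularity bound directly from the generic relative bounds $d$ and $r$, so that $m$ and $n$ depend only on $\mcP$ as the statement requires.
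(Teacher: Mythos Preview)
Your approach is correct in outline but takes a longer route than the paper. The paper simply localises at the codimension-one point $z$: since $Z$ is normal, $R := \mcO_{Z,z}$ is a DVR, and over a DVR every finitely generated torsion-free module is free, so $\hg^0(X, \mcO_X(A))$ is automatically a free $R$-module. There is no need for Castelnuovo--Mumford regularity or cohomology and base change; the rank of this free module equals $h^0$ of the integral generic fibre $X_\eta$, which is bounded via the classical nondegeneracy inequality $\deg X_\eta \ge 1 + \codim X_\eta$ for the embedding by $|A_\eta|$. The rest is then quoted verbatim from \cite[Lemma 3.4]{birkar2023singularities}. Your method works harder to force higher cohomology to vanish on fibres so that base change applies; this is a valid general-purpose technique (and would be needed over bases where the local rings are not principal), but it is unnecessary here once one exploits the DVR structure.

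One point in your argument needs tightening. Your justification that ``projective schemes of bounded dimension polarised by a very ample line bundle of bounded degree have Hilbert polynomials in a finite set'' is circular as written: placing them in a common projective space is precisely what you are trying to establish. Moreover you apply this directly to the special fibre $F_z$, which need not be integral, and for arbitrary non-reduced projective schemes the claim is not obvious. The repair is to invoke flatness first: the Hilbert polynomial of $F_z$ coincides with that of the generic fibre $X_\eta$, and $X_\eta$ \emph{is} integral because $X$ is a variety. For integral schemes the nondegeneracy bound $h^0(X_\eta, A_\eta) \le A_\eta^{\dim X_\eta} + \dim X_\eta \le r + d$ holds, which then bounds the Hilbert polynomial and lets the rest of your argument go through.
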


\begin{proof}
    Pick $(X/Z, D)\in \mc{P}$.
    As $\mc{P}$ is generically relatively bounded, there are fixed natural numbers $d, r$ such that 
    $\dim X - \dim Z \le d$, and we can find a very ample$/Z$ divisor $A$ on $X$ with 
    $\deg_{A/Z} A\le r$ and $\deg_{A/Z} D\le r$.  
    Let $R$ be the localisation of $Z$ at $z$.  Since $z$ is a codimension one point of 
    the normal scheme $Z$, $R$ is a DVR, hence we can assume $Z = \spec R$ 
    and that $\hg^0(X, \mc{O}_X(A))$ is a free $R$-module
    of finite rank.  The rest of the proof follows verbatim from 
    the proof of \cite[Lemma 3.4]{birkar2023singularities}.
\end{proof}


\begin{lemma}[cf. \protect{\cite[Lemma 3.5]{birkar2023singularities}}]\label{bir-universal-family}
    Let $\mc{P}$ be a generically relatively bounded family of couples $(X/Z, D)$, where
    $Z$ is a normal variety.  Then there exist finitely many
    couples $(V_i/T_i, C_i)$ satisfying the following.  Assume $(X/Z, D)\in \mc{P}$.
    Then for each codimension one point $z\in Z$, perhaps after shrinking $Z$ around $z$, 
    there exist an $i$ and a morphism $Z\to T_i$ such that 
    \[ X = Z\times_{T_i} V_i \text{ and } D = Z\times_{T_i} C_i. \]
\end{lemma}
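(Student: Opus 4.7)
The plan is to deduce the result from Lemma~\ref{bir-lem-on-Pn} together with the fact that the Hilbert scheme parametrising closed subschemes of $\PP^n$ with a fixed Hilbert polynomial is of finite type. This mirrors the strategy of \cite[Lemma 3.5]{birkar2023singularities}.

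Fix $(X/Z,D)\in\mcP$ and a codimension one point $z\in Z$. By the discussion at the end of \S\ref{r-bnd-def}, after removing from $Z$ a closed subset of codimension at least two (which we may arrange to avoid $z$, since $z$ has codimension one), the family $(X/Z,D)$ is flat. By Lemma~\ref{bir-lem-on-Pn}, after shrinking $Z$ further around $z$, we have a closed embedding $X\hookrightarrow \PP_Z^n$ over $Z$, where $n\in\NN$ depends only on $\mcP$. Since $\deg_{A/Z}A$ and $\deg_{A/Z}D$ are bounded above by a constant depending only on $\mcP$, and since both $X\to Z$ and $D\to Z$ are flat, the Hilbert polynomials of the fibres $X_z\subset \PP_z^n$ and $D_z\subset \PP_z^n$ range over a finite set $\{P_1,\dots,P_a\}$ and $\{Q_1,\dots,Q_b\}$, respectively.

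Consequently, the flat embedded family $X\subset \PP_Z^n$ is classified by a morphism $\psi\colon Z\to \hilb^{P_i}_{\PP^n}$ for some $i$, and similarly the closed subfamily $D\subset \PP_Z^n$ by a morphism $\psi'\colon Z\to \hilb^{Q_j}_{\PP^n}$ for some $j$. Both Hilbert schemes are of finite type. Let
\[
T_{ij}\subset \hilb^{P_i}_{\PP^n}\times \hilb^{Q_j}_{\PP^n}
\]
be the locally closed subscheme parametrising pairs where the second subscheme is contained in the first; pulling back the universal families produces a couple $(V_{ij}/T_{ij},C_{ij})$ with $C_{ij}\subseteq V_{ij}$. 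Relabelling the finitely many $(V_{ij}/T_{ij},C_{ij})$ as $(V_i/T_i,C_i)$, the map $(\psi,\psi')\colon Z\to T_{ij}$ pulls $(V_i,C_i)$ back to $(X,D)$ by construction.

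The main technical obstacle is to justify that after shrinking once more around $z$ we may treat the Hilbert polynomial as constant and $(X/Z,D)$ as flat at $z$: both follow from the normality of $Z$ (so that $\mcO_{Z,z}$ is a DVR and the codimension $\ge 2$ locus where flatness fails can be removed without losing $z$), combined with the upper-semicontinuity of the Hilbert polynomial on the base. A minor further point is that one should restrict $T_{ij}$ to the open locus where $C_{ij}\subseteq V_{ij}$ is a reduced Weil divisor, which is an open condition respected by pullback.
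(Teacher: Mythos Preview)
Your proposal is correct and follows essentially the same approach as the paper: invoke Lemma~\ref{bir-lem-on-Pn} to embed in $\PP_Z^n$, use that $\mcO_{Z,z}$ is a DVR to ensure flatness around $z$, bound the finitely many Hilbert polynomials, and then pull back from the universal family over the Hilbert scheme. The paper's own proof is terser---it simply notes these points and then says ``the rest of the proof follows identically from the proof of \cite[Lemma 3.5]{birkar2023singularities}''---whereas you spell out the construction of $T_{ij}\subset \hilb^{P_i}_{\PP^n}\times \hilb^{Q_j}_{\PP^n}$ explicitly, which is exactly what that reference does.
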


\begin{proof}
    By Lemma~\ref{bir-lem-on-Pn}, there is an $n\in \NN$ depending only on $\mc{P}$
    such that perhaps after shrinking $Z$ around $z$ the morphism $X\to Z$
    factors through a closed immersion $X\to \PP_Z^n$.  
    In particular, as $\mc{O}_{Z,z}$ is a DVR, $X\to Z$ can be viewed as a flat family of closed subschemes of $\PP_Z^n$
    with finitely many possible Hilbert polynomials depending only on $\mc{P}$.
    Similarly, $D\to Z$ is locally flat over $z\in Z$, 
    hence it can be viewed as a flat family of closed subschemes of $\PP_Z^n$
    (of one dimension less) again with finitely many possible Hilbert polynomials
    depending only on $\mc{P}$.
    Taking a stratification of $\mc{P}$, we can assume that the Hilbert polynomials 
    are fixed in each case.  The rest of the proof follows identically from 
    the proof of \cite[Lemma 3.5]{birkar2023singularities}.
\end{proof}


\section{Toroidal geometry}\label{toroidal-section}

Part of the proofs of Theorem~\ref{vertical-bnd-intro} relies on 
the proof of the main result of \cite{birkar2024irrationality}, where toroidal geometry
is one of the crucial ingredients.
In this section, we collect the necessary notions from toroidal geometry for clarification.

\subsection{Toroidal couples}\label{toroidal-couples-defn}
We refer the readers to \cite{CLS:toric} for the general theory of toric varieties.
Let $(X,D)$ be a couple.  We say that the couple is \emph{toroidal} at a closed point $x\in X$ 
if there exist a \emph{normal} affine toric variety $W$ and a closed point $w\in W$ such that there is 
a $\mathbb K$-algebra isomorphism 
\[ \widehat{\mathcal{O}}_{{X},{x}}\to \widehat{\mathcal{O}}_{{W},w} \]
of completion of local rings so that the ideal of $D$ is mapped to the ideal of the toric boundary divisor 
$C\subset W$, that is, the reduction of complement of the torus $\mathbb{T}_W$ of $W$;
see \cite[\S 3.10]{birkar2023singularities}. 
We call $\Set{(W,C),w}$ a \emph{local toric model} of $\Set{(X,D),x}$.
We say that the couple $(X,D)$ is \emph{toroidal} if it is toroidal at every closed point;
in this case, we say that $D$ is the \emph{toroidal boundary} of $(X, D)$.

In literature, the open immersion $U_X:=X\setminus \supp D \subseteq X$ is usually called a 
\emph{toroidal embedding}; for example, see \cite[Chap. II, \S 1, page 54]{KKMB}.
In particular, $U_X$ is smooth as $\mc{O}_{X,x}$ is regular if
and only if $\wh{\mc{O}}_{X ,x}$ is regular.  
We will use the notions toroidal couples and toroidal embeddings interchangeably
to be consistent with literature.
Moreover, if the embedding $(U_X\subset X)$, or equivalently the couple $(X, D)$, 
is clear from the context, we just say that $X$ is a \emph{toroidal variety}.

Let $(X, D)$ be a toroidal couple.  If every irreducible component of $D$ is normal,
we call $(X, D)$ a \emph{strict toroidal couple}.  In this case, we say that the corresponding
toroidal embedding $(U_X\subset X)$ is a \emph{strict toroidal embedding} or
a \emph{toroidal embedding without self-intersection}; see \cite[page 57]{KKMB}.
Every log smooth couple is a strict toroidal couple.


\subsection{Toroidal morphisms}\label{toroidal-morphisms}

Let $(X,D)$ and $(Y,E)$ be couples, and let $f\colon X\to Y$ be a morphism of varieties. 
Let $x\in X$ be a closed point 
and $y=f(x)$. We say $(X,D)\to (Y,E)$ 
is a \emph{toroidal morphism at $x$} if there exist local 
toric models $\Set{(W,C),w }$ and $\Set{(V,B),v}$ of 
$\Set{(X,D), x}$ and $\Set{(Y,E), y}$ respectively, and a toric morphism $g\colon W\to V$ of normal affine toric varieties 
so that we have a commutative diagram 
\[\xymatrix{
\widehat{\mathcal{O}}_{{X},{x}}\ar[r] & \widehat{\mathcal{O}}_{{W},w}\\
\widehat{\mathcal{O}}_{{Y},{y}} \ar[u] \ar[r] & \widehat{\mathcal{O}}_{{V},v} \ar[u]
}
\]
where the vertical maps are induced by the given morphisms $f$ and $g$ and the horizontal maps are 
isomorphisms induced by the local toric models; see \cite[\S 3.10]{birkar2023singularities}.
We say that the morphism
$f\colon (X, D) \to (Y, E)$ is \emph{toroidal} 
if it is toroidal at every closed point of $X$. 
Equivalently, we call the corresponding morphism 
$f\colon (U_X\subset X) \to (U_Y\subset Y)$
a \emph{toroidal morphism of toroidal embeddings}; cf. \S \ref{toroidal-couples-defn}.


\subsection{Relative toroidalisation}
The following result generalises the base change functoriality of \cite[Theorem 3.12 (ii)]{birkar2024irrationality}
from nonsingular curves to codimension one points of higher-dimensional normal varieties;
the proof of this result follows directly from \cite[Theorem 3.11]{birkar2024irrationality}.
For more details about saturated base change of dominant toroidal morphisms,
see \cite{Q-log-geometry}.

\begin{theorem}[cf. \protect{\cite[Theorem 3.12]{birkar2024irrationality}}]\label{functorial-toroidalisation}
	Let $\Phi \colon \mf{X} \to \mf{B}$ be a projective, 
	surjective morphism of varieties with geometrically integral generic fibre.  
	Let $\mf{Z}_{\mf{X}}\subset \mf{X}$ and $\mf{Z}_{\mf{B}}\subset \mf{B}$ be proper closed subsets
    of $\mf{X}$ and $\mf{B}$ respectively.	
	Then there exists a toroidal morphism $\Phi'$ between toroidal embeddings satisfying the following properties.
	\begin{itemize}
        \setlength\itemsep{0.5em}
		\item [\emph{(i)}] \emph{[Existence]} There is a commutative diagram
	          \[\xymatrix{
	           (U_{\mf{X}'}\subset \mf{X}')\ar[r]^-{m_{\mf{X}}}\ar[d]^{\Phi'} &\mf{X}\ar[d]^{\Phi} \\
	          (U_{\mf{B}'}\subset \mf{B}')\ar[r]^-{m_{\mf{B}}} & \mf{B}
	          }\]
	          where 
              \begin{itemize}
                  \item [\emph{(1)}] $m_{\mf{B}}$ and $m_{\mf{X}}$ are projective birational morphisms,
                  \item [\emph{(2)}] the embeddings on the left are strict toroidal embeddings,
                  \item [\emph{(3)}] $m_{\mf{X}}^{-1}(\mf{Z}_{\mf{X}})$ is contained in $\mf{X}'\setminus U_{\mf{X}'}$,
                  \item [\emph{(4)}] $m_{\mf{B}}^{-1}(\mf{Z}_{\mf{B}})$ is contained in $\mf{B}'\setminus U_{\mf{B}'}$, and
                  \item [\emph{(5)}] $\Phi'$ is a surjective, projective toroidal morphism.
              \end{itemize}
	     \item [\emph{(ii)}] \emph{[Base change functoriality]} For any given open subset $U\subseteq \mf{B}$, there is a nonempty open subset 
          \[U_{\mf{B}}\subset U\cap(\mf{B}\setminus \mf{Z}_{\mf{B}})\]
	         satisfying the following property.  Assume that
             \begin{itemize}
                 \item [\emph{(1)}] $i\colon B\to \mf{B}$ is a morphism from a normal variety $B$,
                 \item [\emph{(2)}] $b\in B$ is a codimension one point of $B$, and
                 \item [\emph{(3)}] the image $i(B)$ is not entirely contained in the closed subset $\mf{B}\setminus U_{\mf{B}}$.
             \end{itemize} 
             Then there exists an open dense subset $B'\subseteq B$ containing $b$
             such that 
             \begin{itemize}
                 \item [\emph{(a)}] $B'$ is a nonsingular variety,
                 \item [\emph{(b)}] the closure $\ol{\Set{b}}$ of the codimension one point $b$ in $B'$ is a nonsingular reduced divisor on $B'$ so that $(B', \ol{\Set{b}})$ is a log smooth couple, 
                 \item [\emph{(c)}] $B'$ admits a morphism $i'\colon B'\to \mf{B}'$ making the following diagram commutative
                 \[\xymatrix{
                 \mf{B}'\ar[r]^-{m_{\mf{B}}} & \mf{B} \\
                 B'\ar[u]_{i'}\ar[r]^-{m_B} & B\ar[u]_i
                 }\]
                 where $m_B$ is the open immersion $B'\hookrightarrow B$, 
                 \item [\emph{(d)}] $(U_{B'}\subset B')$ is a strict toroidal embedding, where $U_{B'}$ is equal to $(i')^{-1}(U_{\mf{B}'})$, and
                 \item [\emph{(e)}] $\mf{X}\times_{\mf{B}} B$ (respectively, $\mf{X}'\times_{\mf{B}'} B'$) has a unique irreducible component dominating $B$ (respectively, $B'$), 
                 which we call the main component.  
             \end{itemize}
             Moreover, denote by $X$ the reduction of the main component of $\mf{X}\times_{\mf{B}} B$ with projection morphism $f\colon X\to B$. 
             Consider the diagram arising from 
	         normalised base change as follows:
	         \[\xymatrix{
	         (U_{X'}\subset X')\ar[ddd]^{f'}\ar[rrr]^{m_X}\ar[rd] & & & X\ar[ddd]^f\ar[ld] \\
	           & (U_{\mf{X}'}\subset \mf{X}')\ar[r]^-{m_{\mf{X}}}\ar[d]^{\Phi'} &\mf{X}\ar[d]^{\Phi} & \\
	           & (U_{\mf{B}'}\subset \mf{B}')\ar[r]^-{m_{\mf{B}}} & \mf{B} & \\
	         (U_{B'}\subset B')\ar[rrr]^{m_B}\ar[ru]^{i'} & & & B\ar[lu]_i
	         }\]
	         where $X'$ is 
	         the normalisation of the main component of $\mf{X}'\times_{\mf{B}'} B'$, and
	         $U_{X'}$ is the inverse image of $U_{\mf{X}'}$ in $X'$.
	         Then 
             \[ f'\colon (U_{X'}\subset X')\to (U_{B'}\subset B') \] 
             is also a toroidal morphism between strict toroidal embeddings 
             satisfying all the properties listed in \emph{(i)}.
	         In particular, $m_X\colon X'\to X$ is a projective birational morphism, and
	         the inverse image of $\mf{Z}_{\mf{B}}$ in $B'$ (respectively, of 
	         $\mf{Z}_{\mf{X}}$ in $X'$) is contained in $B'\setminus U_{B'}$ (respectively, in $X'\setminus U_{X'}$).
	\end{itemize}
\end{theorem}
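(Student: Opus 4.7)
The plan is to deduce the theorem from \cite{birkar2024irrationality}, with part (i) being essentially a direct application of [birkar2024irrationality, Theorem 3.11] and part (ii) requiring an extension of the curve-base-change argument of [birkar2024irrationality, Theorem 3.12 (ii)] to codimension one points of higher-dimensional normal varieties. For (i), I apply Theorem 3.11 to $\Phi\colon \mf{X}\to \mf{B}$ with the prescribed closed subsets $\mf{Z}_{\mf{X}}$ and $\mf{Z}_{\mf{B}}$; the resulting projective birational morphisms $m_{\mf{X}}$ and $m_{\mf{B}}$, the strict toroidal embeddings on the left, and the surjective projective toroidal morphism $\Phi'$ satisfy conditions (1)--(5) by construction.

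The key observation for (ii) is that when $B$ is a normal variety and $b\in B$ is a codimension one point, the local ring $\mc{O}_{B,b}$ is a DVR, exactly as in the case treated in [Theorem 3.12 (ii)] where $B$ is a nonsingular curve and $b$ is a closed point. I first define $U_{\mf{B}}\subseteq U\cap (\mf{B}\setminus \mf{Z}_{\mf{B}})$ to be a dense open subset over which $m_{\mf{B}}$ is an isomorphism, over which $\Phi$ is flat with geometrically integral fibres, and over which the toroidal boundary divisor $\mf{B}'\setminus U_{\mf{B}'}$ does not meet the preimage; this choice depends only on $\Phi$, $\mf{Z}_{\mf{X}}$, and $\mf{Z}_{\mf{B}}$, not on subsequent base changes. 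Given $i\colon B\to \mf{B}$ with $b$ of codimension one and $i(B)$ not contained in $\mf{B}\setminus U_{\mf{B}}$, I shrink $B$ around $b$: by normality the regular locus contains $b$, and after further shrinking one may assume $B'$ is nonsingular with $\ol{\Set{b}}\cap B'$ a nonsingular Cartier divisor, so that $(B',\ol{\Set{b}})$ is log smooth and $(U_{B'}\subset B')$ with $U_{B'}=B'\setminus \ol{\Set{b}}$ is a strict toroidal embedding. The lift $i'\colon B'\to \mf{B}'$ is then obtained by taking the main component of $B'\times_{\mf{B}} \mf{B}'$: since $i(B')$ meets $U_{\mf{B}}$ where $m_{\mf{B}}$ is an isomorphism, this main component maps birationally onto $B'$ and, after further shrinking, yields the desired $i'$.

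For the toroidal structure on $f'\colon X'\to B'$ between strict toroidal embeddings, I invoke the theory of saturated base change of dominant toroidal morphisms developed in \cite{Q-log-geometry}. The analysis is essentially local at the generic point of $\ol{\Set{b}}$, where $B'$ localises to the DVR $\mc{O}_{B',b}$; here the situation is formally identical to the nonsingular-curve base change already handled in [Theorem 3.12 (ii)], which supplies the local toric charts for $X'\to B'$. The existence of main components for $\mf{X}\times_{\mf{B}} B$ and $\mf{X}'\times_{\mf{B}'} B'$ follows from the geometrically integral generic fibre hypothesis on $\Phi$ together with the birationality of $m_{\mf{X}}$ and $m_{\mf{B}}$. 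The main obstacle is to verify that the local toric models at closed points of the fibres over $b$ assemble into a globally toroidal structure on $X'$ over a Zariski neighbourhood of $b$ in $B'$, rather than only at the generic point of $\ol{\Set{b}}$; this spreading-out uses both the explicit combinatorial description of the toroidalisation produced by Theorem 3.11 and the functorial behaviour of toroidal morphisms under saturated base change established in \cite{Q-log-geometry}.
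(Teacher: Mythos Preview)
Your approach is essentially the same as the paper's, but there are two points worth noting.

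First, a minor gap: you assert that $U_{B'}=B'\setminus \ol{\{b\}}$, but the statement \emph{defines} $U_{B'}:=(i')^{-1}(U_{\mf{B}'})$, and these need not coincide. The paper handles this by a case distinction: if $i'(b)\in U_{\mf{B}'}$, one shrinks $B'$ so that the entire image $i'(B')$ lies in $U_{\mf{B}'}$, whence $U_{B'}=B'$ and the toroidal boundary is empty; if $i'(b)\notin U_{\mf{B}'}$, one shrinks $B'$ so that $B'\setminus U_{B'}=\ol{\{b\}}$, which is your case. Without this distinction, condition (d) is not verified.

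Second, your final paragraph over-complicates matters. You frame the ``main obstacle'' as spreading out local toric charts from the generic point of $\ol{\{b\}}$ to a Zariski neighbourhood, and propose to resolve it via the saturated base change theory of \cite{Q-log-geometry} together with the combinatorics of Theorem 3.11. In fact the paper simply cites \cite[Theorem 3.11]{birkar2024irrationality} directly for the conclusion that $f'$ is toroidal between strict toroidal embeddings: that theorem already packages the base change functoriality you need, so no separate spreading-out argument is required. Your construction of the lift $i'$ via the main component of $B'\times_{\mf{B}}\mf{B}'$ is fine and amounts to the same thing as the paper's use of the valuative criterion of properness for the DVR $\mc{O}_{B,b}$.
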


\begin{proof}
    The result (i) follows verbatim from the proof of \cite[Theorem 3.12 (i)]{birkar2024irrationality}.
    For (ii), let $V_{\mf{B}}\subset \mf{B}$ be the closed subset 
	that is the image of the toroidal boundary $\mf{B}'\setminus U_{\mf{B}'}$.
	As $m_{\mf{B}}$ is projective and birational,
	$V_{\mf{B}}$ avoids the generic point of $\mf{B}$, hence
	$\mf{B}\setminus V_{\mf{B}}$ is a nonempty open subset of $\mf{B}$ contained in $\mf{B}\setminus \mf{Z}_{\mf{B}}$.
    Let $U_{\mf{B}}$ be an open subset of $U\cap (\mf{B}\setminus V_{\mf{B}})$
    such that 
    \begin{itemize}
        \item $m_{\mf{B}}$ is an isomorphism over $U_{\mf{B}}$,
        \item every fibre of $\Phi$ (respectively, of $\Phi'$) over a closed point of $U_{\mf{B}}$ (respectively, of $m_{\mf{B}}^{-1}(U_{\mf{B}})$) is integral, and
        \item for every closed point $c'\in m_{\mf{B}}^{-1}(U_{\mf{B}})$, 
        the morphism $\mf{X}'_{c'}\to \mf{X}_c$ induced by $m_{\mf{X}}$ is birational, where $c := m_{\mf{B}}(c')$.
    \end{itemize}
	Then given a morphism $i\colon B\to \mf{B}$ from a normal variety $B$
    such that $i(B)$ is not entirely contained in $\mf{B}\setminus U_{\mf{B}}$,
    the existence of the morphism $i'\colon B'\to \mf{B}'$
    satisfying the conditions (a), (b) and (c) follows from
    the valuative criterion of properness 
    as $b$ is a codimension one point of the normal variety $B$.
    If $i'(b)$ is contained in $U_{\mf{B}'}$,
    we can assume that the whole $i'(B')$ is contained in $U_{\mf{B}'}$, then
    we give $B'$ the empty toroidal boundary.
    If $i'(b)$ belongs to $\mf{B'}\setminus U_{\mf{B}'}$,
    shrinking $B'$ near $b$ if necessary, we can assume that
    $B'\setminus U_{B'}$ is equal to the closure of $\Set{b}$ in $B'$.
    Thus, $(U_{B'}\subset B')$ is a strict toroidal embedding as $(B', \ol{\Set{b}})$
    is a log smooth couple, hence (d) holds.
    As the irreducible components of $\mf{X}\times_{\mf{B}}B$ dominating $B$
    correspond bijectively to the irreducible components of 
    the generic fibre of $\mf{X}\times_{\mf{B}}B\to B$,
    (e) follows immediately from the construction of $U_{\mf{B}}$.
    Finally, $f'\colon (U_{X'}\subset X')\to (U_{B'}\subset B')$ 
    is a toroidal morphism between strict toroidal embeddings by
	\cite[Theorem 3.11]{birkar2024irrationality}.
\end{proof}


\section{SS-degree of divisors on fibrations of relative dimension one}\label{threefolds-rel-dim-one}

We prove $\mc{V}(1, t)$ in this section, which is the first step
towards the inductive proof of $\mc{V}(d, t)$ on the relative dimension $d$ in \S\ref{pf-V-d-t}.

\subsection{Proof of $\mc{H}(1,t)$}\label{pf-S-1-t}
We first show that $\mc{H}(d, t)$ holds for every $t\in \RR^{>0}$
if a general fibre of the generalised log Calabi-Yau fibration $(X, B + \mb{M})\to Z$ is of dimension one.
	
\begin{lemma}[$\mc{H}(1, t)$]\label{base-case-r-dim=1}
    Let $t\in (0,1]$.
    Let $(X, B + \mb{M})\to Z$ be a generalised log Calabi-Yau fibration of relative dimension one.
    Let $S$ be a horizontal$/Z$ irreducible component of $B$ such that $\coeff_S B \ge t$.
    Then $\sdeg (S^{\nor}/Z)$ is bounded from above depending only on $t$.
\end{lemma}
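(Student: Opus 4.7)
\medskip

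The plan is to restrict everything to a general fibre and extract the bound from the degree equation on a curve. Since $X\to Z$ has relative dimension one and we are in characteristic zero, a general fibre $F$ of $X\to Z$ is a smooth projective curve. By generic smoothness and generalised adjunction (see e.g.\ \cite[Definition 4.7]{B-Zhang} applied to $L$ a general pullback), the g-lc generalised pair $(X, B+\mathbf{M})$ restricts to a g-lc g-pair $(F, B_F + \mathbf{N})$ with $B_F = B|_F$ and $\mathbf{N}_F = \mathbf{M}_F$. Moreover, pulling back the relation $K_X + B + \mathbf{M}_X \sim_{\mathbb{R}} 0/Z$ to $F$ yields
\[ K_F + B_F + \mathbf{M}_F \sim_{\mathbb{R}} 0. \]

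Next, I would take degrees on $F$. Since $\mathbf{M}$ descends to a nef $\mathbb{R}$-divisor on some birational model of $X$, its restriction to a general $F$ has non-negative degree $m := \deg \mathbf{M}_F \ge 0$. With $g = g(F)$ the genus, the relation above gives
\[ 2g - 2 + \deg B_F + m = 0. \]
Because $S$ is horizontal$/Z$ with $\coeff_S B \ge t > 0$, the intersection $S\cap F$ is non-empty, so $\deg B_F > 0$; hence $g \le 0$, forcing $g=0$ and $F \cong \mathbb{P}^1$. Consequently $\deg B_F = 2 - m \le 2$.

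Finally, I would identify $\sdeg(S^{\nor}/Z)$ with an intersection count. By Lemma~\ref{num-irr-comp} we have $\sdeg(S^{\nor}/Z) = \mathfrak{n}(S/Z)$, which is the number of irreducible components of a general fibre of the horizontal morphism $S \to Z$. For a general closed point $z\in Z$, the scheme-theoretic fibre of $S\to Z$ equals $S\cap F$, and (after shrinking $Z$ to a dense open using generic flatness and the characteristic-zero hypothesis) is a reduced zero-dimensional scheme; its length therefore equals the number of its geometric points, which is $\mathfrak{n}(S/Z)$. Since each such point contributes at least $\coeff_S B \ge t$ to $\deg B_F$, we obtain
\[ t \cdot \sdeg(S^{\nor}/Z) \;=\; t \cdot \mathfrak{n}(S/Z) \;\le\; \coeff_S B \cdot \deg(S|_F) \;\le\; \deg B_F \;\le\; 2, \]
giving $\sdeg(S^{\nor}/Z) \le 2/t$, a bound depending only on $t$.

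There is no real obstacle here — the only technical point requiring care is the passage from $\mathbf{M}$ being nef on a birational model to $\deg \mathbf{M}_F \ge 0$, which uses that for a general $F$ the chosen model of $\mathbf{M}$ dominates $F$, and the reducedness of the scheme-theoretic fibre $S\cap F$ for very general $z$, which is standard in characteristic zero. Everything else is linear algebra on a $\mathbb{P}^1$.
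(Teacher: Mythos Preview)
Your proposal is correct and follows essentially the same approach as the paper: restrict to a general fibre $F$, use that $\deg \mathbf{M}_F\ge 0$ and $\deg B_F>0$ to force $F\cong\mathbb{P}^1$, and then bound $\sdeg(S^{\nor}/Z)$ by $\deg B_F\le 2$ divided by $t$. The paper is slightly more direct in one place, observing that since $S\to Z$ is generically finite one has $\sdeg(S^{\nor}/Z)=\deg(S/Z)=\deg(S|_F)$ outright, which sidesteps your discussion of reducedness of $S\cap F$; but this is a cosmetic difference, and your bound $2/t$ matches the paper's exactly.
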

	
\begin{proof}
	As $\dim (X/Z) = 1$, a general fibre $F$ of $X\to Z$ is an irreducible and nonsingular curve.
   Moreover, both $S^{\nor}\to Z$ and $S\to Z$ are generically finite, hence
   \[ \sdeg (S^{\nor}/Z) = \sdeg (S/Z) = \deg (S/Z). \]
   Write
   \[ K_F + B_F + M_F = (K_X + B + \mb{M}_X)|_F \sim_{\RR} 0, \]
   where $B_F := B|_F$, and $M_F := \mb{M}_X |_F$ is an $\RR$-divisor on $F$ with $\deg M_F \ge 0$.
   As $S|_F$ is effective and nonzero, $\deg K_F < 0$, so $F$ is isomorphic to $\PP^1$.
   Thus,
   \[ \deg (S/Z) = \deg (S|_F) \le \frac{1}{t} \deg (-K_F) = \frac{2}{t}, \]
   which shows that $\sdeg (S^{\nor}/Z)$ is bounded from above depending only on $t$.
\end{proof}


\subsection{Boundedness of SS-degree on fibrations of relative dimension one}\label{pf-pre-V-1-t}

\begin{lemma}\label{surface-comp-bnd}
    Let $t\in \RR^{>0}$.
    Let $(X, \Delta)$ be a $\QQ$-Gorenstein surface lc pair.
    Let $x\in X$ be an arbitrary closed point.
    Then the number of irreducible components of $\Delta$ 
    passing through $x\in X$ and having coefficients $\ge t$
    is bounded from above depending only on $t$.
\end{lemma}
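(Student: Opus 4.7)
The statement is local at $x$; I shrink $X$ to an affine neighbourhood of $x$ and replace $\Delta$ by the sub-boundary $\Delta_0 := t \sum_{i=1}^{k} S_i$, where $S_1, \dots, S_k$ are the irreducible components of $\Delta$ with coefficient $\ge t$ passing through $x$. Since $0\le \Delta_0 \le \Delta$, the pair $(X, \Delta_0)$ is still lc, so it suffices to bound $k$ in terms of $t$.

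The plan is to extract a suitable divisor $E$ over $x$ and apply the lc condition on it. If $X$ is smooth at $x$, I take $E$ to be the exceptional divisor of the blowup of $x$: then $a(E, X, 0) = 2$ and $\operatorname{ord}_E(S_i) = \mult_x S_i \ge 1$ for every $i$, so the lc condition
\[ t \sum_i \operatorname{ord}_E(S_i) \;\le\; a(E, X, 0) \]
gives $tk \le 2$, whence $k \le 2/t$. If $x$ is a singular point of $X$, I pass to the minimal resolution $\pi\colon X'\to X$ with exceptional curves $\{E_j\}_{j\in J}$, write
\[ K_{X'} + t\sum_i \widetilde S_i + \sum_j b_j E_j \;=\; \pi^*(K_X + \Delta_0), \qquad b_j \le 1, \]
and exploit the identity $(K_{X'}+\Delta_{X'})\cdot E_j = 0$. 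Combined with adjunction on $E_j$ and $(1-b_j)E_j^2 \le 0$, this rearranges to
\[ t \sum_i \widetilde S_i\cdot E_j \;+\; \sum_{l\ne j} b_l(E_l \cdot E_j) \;\le\; 2 - 2 g(E_j). \]
The exceptional fibre $\pi^{-1}(x) = \bigcup_j E_j$ is connected by Zariski's main theorem, and each $\widetilde S_i$ meets it; an $E_j$ of arithmetic genus $\ge 1$ would make the right-hand side non-positive and, by connectedness, force $k = 0$, so every $E_j \cong \PP^1$. Writing $\pi^*S_i = \widetilde S_i + \sum_j m_{ij}E_j$, the vector $(m_{ij})_j$ is the componentwise-positive solution of the negative-definite system $(E_j\cdot E_l)(m_{i\bullet}) = -(\widetilde S_i\cdot E_l)_l$, and the lc condition reads $t\sum_i m_{ij} \le a(E_j, X, 0)$ for each $j\in J$. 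Combining these $|J|$ constraints via a specific positive linear combination --- with weights dictated by the inverse of the intersection matrix --- yields the uniform bound $k \le 2/t$.

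The main obstacle is this final combinatorial step. Each per-$E_j$ constraint alone depends on $a(E_j, X, 0)$, which can be arbitrarily small for cyclic quotient singularities of large index, so a naive sum gives a bound growing with $|J|$. The key point is that the same intersection matrix that shrinks $a(E_j, X, 0)$ also shrinks the multiplicities $m_{ij}$ in a matched way, so the combined system of inequalities always collapses to $tk \le 2$ independently of the singularity type. Verifying this cancellation cleanly --- for instance by a direct linear-programming argument running over the classification of surface lc singularities --- is the technical crux.
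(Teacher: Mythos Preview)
Your outline is sound through the smooth case and the setup on the minimal resolution, but the proof stops precisely where the content is: you assert that a ``specific positive linear combination'' of the per-$E_j$ constraints collapses to $tk\le 2$, yet you neither name the weights nor exhibit the cancellation, and you yourself flag this as the technical crux. That is a genuine gap. The difficulty is not cosmetic: for a cyclic quotient $\frac{1}{n}(1,q)$ with large $n$, each individual log discrepancy $a(E_j,X,0)$ and each pullback multiplicity $m_{ij}$ is of order $1/n$, so no single inequality $t\sum_i m_{ij}\le a(E_j,X,0)$ says anything, and ``running over the classification'' still means producing, for each of the infinite families of quotient singularities, a uniform aggregation independent of the parameters. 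Your sketch does not do this. (A smaller loose end: the genus~$\ge 1$ case needs $b_l\ge 0$, which you use without comment; it holds because on the \emph{minimal} resolution the discrepancies of $(X,0)$ are $\le 0$ and the $m_{il}$ are $>0$.)

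The paper sidesteps the intersection-matrix combinatorics entirely. After observing that if $(x\in X,0)$ is not klt then $\Delta=0$ near $x$, it passes to the index-one cover $(y\in Y)\to(x\in X)$, so that $(y\in Y)$ is Gorenstein canonical and hence of ADE type; for $D_n,E_6,E_7,E_8$ it bounds the Cartier index of any curve by the order of the local class group ($\le 4$) and extracts a single crepant divisor to get $k\le 4/t$, while for $A_n$ it pulls back along the cyclic cover $\CC^2\to Y$ (\'etale off~$0$) and uses $\mult_0\le 2$ on $\CC^2$ to get $k\le 2/t$. If you want to rescue your uniform $2/t$ without the linear programming, note that this last trick works for every klt surface point at once: any such point is analytically $\CC^2/G$ with $G$ finite acting freely off the origin, the quotient map is \'etale in codimension one so $K_{\CC^2}=\pi^*K_X$, lc-ness lifts, and $\mult_0(\pi^*\Delta_0)\le 2$ gives $tk\le 2$ directly. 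That one-line reduction to the smooth case replaces your unfinished combinatorial step.
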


\begin{proof}
    By strong Lefschetz principle (see \cite{Lefschetz-principle}), we can work over $\CC$.
    If $(x\in X, 0)$ is not log terminal, then $\Delta = 0$ near $x\in X$, so we can assume that
    $(x\in X, 0)$ is log terminal in the rest of the proof.
    In particular, shrinking $X$ around $x$, we can assume that $x$ is the only singularity of $X$
    and that $X$ is $\QQ$-factorial; see \cite[Proposition 4.11]{km98}.  

    \medskip
    
    \emph{Step 1}.
    Let $p\colon (y\in Y)\to (x\in X)$ be the index 1 cover near $x\in X$; see \cite[Definition 5.19]{km98}.
    Then by \cite[Definition 5.19, Corollary 5.21]{km98},
    up to shrinking $X$ near $x\in X$ if necessary,
    \begin{itemize}
        \item $p$ is finite \'etale over $X\setminus \Set{x}$, which implies that $K_Y = p^* K_X$, and
        \item $Y$ has a canonical singularity at $y$ of index 1, i.e., $K_Y$ is Cartier.
    \end{itemize}
    Write
    \[ K_Y + \Delta_Y = p^*(K_X + \Delta), \]
    where $\Delta_Y = p^* \Delta$ is effective.  
    Then $(Y, \Delta_Y)$ is also a surface lc pair by \cite[Proposition 5.20]{km98}.
    Hence, it suffices to show that the number of
    irreducible components of $\Delta_Y$ passing through $y\in Y$
    and having coefficients $\ge t$ is bounded from above depending only on $t$.
    Moreover, it suffices to prove the result in the analytic topology (over $\CC$). 
    By \cite[\S 4.2]{km98}, $(y\in Y)$ belongs to one of the types 
    $A_n$ ($n\ge 1$), $D_n$ ($n\ge 4$), $E_6$, $E_7$, and $E_8$.  

    \medskip

    \emph{Step 2}.  
    Let $\pi\colon W\to Y$ be the minimal resolution of $y\in Y$,
    and let $E' := \Set{ E_1', \dots, E_m' }$ be the collection of exceptional curves of $\pi$. 
    The dual graph of $E'$ is listed in \cite[Theorem 4.22]{km98},
    and $(E_i')^2 = -2$ for every irreducible component $E_i'$; see \cite[page 198]{Matsuki-intro-Mori}.  Let 
    \[ \big( (E_i'\cdot E_j') \big)_{1\le i, j\le m} \] 
    be the intersection matrix of $E'$.
    Let $H$ be the Abelian group with generators $e_1, \dots, e_m$ 
    subject to the relations (see \cite[page 225]{Lipman-ra-sing-UFD})
    \[ \sum_{j=1}^m (E_i'\cdot E_j') e_j = 0 \text{ for } 1\le i\le m. \]
    Then $H$ is a finite group of order 
    \[ \abs{ \det \big( (E_i'\cdot E_j') \big)_{1\le i, j\le m} }.  \]
    Denote by $\cl (\mc{O}_{Y,y})$ the divisor class group of $\mc{O}_{Y,y}$;
    cf. \cite[page 222]{Lipman-ra-sing-UFD} and \cite[page 165]{CA-Mat}.
    By the proof of \cite[Proposition (17.1)]{Lipman-ra-sing-UFD}, there is an exact sequence
    \[ 0\to \cl (\mc{O}_{Y,y}) \to H \to G \to 0, \]
    where $G$ is a finite group defined on \cite[page 225]{Lipman-ra-sing-UFD}.
    By standard linear algebra, it is easy to see that the order $\abs{H}$
    of the finite group $H$ is given by
    \[ \abs{H} =
    \begin{dcases}
        n+1 & \text{ if } (y\in Y) \text{ is of type } A_n\,\,(n\ge 1) \\
        4 & \text{ if } (y\in Y) \text{ is of type } D_n\,\,(n\ge 4) \\
        3 & \text{ if } (y\in Y) \text{ is of type } E_6 \\
        2 & \text{ if } (y\in Y) \text{ is of type } E_7 \\
        1 & \text{ if } (y\in Y) \text{ is of type } E_8.
    \end{dcases}
    \]

    \medskip

    \emph{Step 3}.
    If $(y\in Y)$ is of type $D_n$ ($n\ge 4$), $E_6$, $E_7$, or $E_8$, 
    then (up to shrinking $Y$ around $y$) for any irreducible curve $C$ passing through $y\in Y$,
    the Cartier index $I(C)$ of $C$ satisfies that
    \[ I(C) \le 4. \]
    Assume that $\Delta_Y$ has $r$ irreducible components $F_1, \dots, F_r$ passing through $y\in Y$
    and having coefficients $\ge t$.
    As $(Y, 0)$ is canonical but not terminal at $y$, there exists a birational contraction
    $\pi\colon W\to Y$ such that $\pi$ admits a single exceptional divisor $E$ (over $y\in Y$)
    whose log discrepancy is equal to one.  Thus, we can write
    \[ K_W = \pi^* K_Y. \]
    Moreover, for every $1\le i\le r$, $4F_i$ is a Cartier divisor passing through $y\in Y$, hence
    \[ \pi^* (4 F_i) = 4 F_i^{\sim} + \alpha_i E, \]
    where $F_i^{\sim}$ is the birational transform of $F_i$ to $W$ and $\alpha_i\in \NN$.
    Write
    \[ K_W + \Delta_W = \pi^*(K_Y + \Delta_Y), \]
    where $\Delta_W = \pi^* \Delta_Y$.  Then as $(Y, \Delta_Y)$ is an lc pair, we have
    \[ \frac{rt}{4} \le \sum_{i=1}^r \frac{\alpha_i t}{4} \le \coeff_E \Delta_W \le 1. \]
    Therefore, we can conclude that
    \[ r\le \frac{4}{t}, \]
    which is bounded from above depending only on $t$. 

    \medskip

    \emph{Step 4}.  If $(y\in Y)$ is of type $A_n$ ($n\ge 1$), then $(y\in Y)$ is the analytic quotient of $(0\in \CC^2)$
    by $C_{n+1}$, the cyclic group of order $n+1$; see \cite[Corollary 4-6-16]{Matsuki-intro-Mori}.
    Let $W$ be an Euclidean open subset of $\CC^2$ containing $0\in \CC^2$
    such that $\pi\colon W \to Y$
    is the corresponding analytic quotient morphism 
    defining the singularity $y\in Y$.
    Then $\pi$ is \'etale over $Y\setminus \Set{y}$, and $\deg \pi = n+1$.  Write
    \[ K_W + \wt{\Delta} = \pi^*(K_Y + \Delta_Y), \]
    where $\wt{\Delta} = \pi^* \Delta_Y$, then $(W, \wt{\Delta})$ is also a surface lc pair.
    Assume that $\Delta_Y$ has $r$ irreducible components $F_1, \dots, F_r$ 
    passing through $y\in Y$ and having coefficients $\ge t$.  
    Note that every $\pi^* F_i$ is an effective integral divisor on $W$ 
    as $\pi\colon W\setminus \Set{0}\to Y\setminus \Set{y}$ is \'etale.
    Since $W$ is nonsingular, we have 
    \[ rt\le \sum_{i=1}^r t \cdot\mult_0 (\pi^* F_i) \le \mult_0 \wt{\Delta} \le 2, \]
    hence
    \[ r\le \frac{2}{t}. \]
    Therefore, the number of irreducible components of $\Delta_Y$
    passing through $y\in Y$ and having coefficients $\ge t$
    is bounded from above depending only on $t$.  
\end{proof}


The following result is a more general version of $\mc{V}(1,t)$.

\begin{theorem}\label{fir-one-u-t}
    Let $u, t\in (0,1]$.
	Let $f\colon (X, B + \mb{M}) \to Z$ be a generalised log Calabi-Yau fibration of relative dimension one such that
	\begin{itemize}
		\item [\emph{(1)}] there exists a horizontal$/Z$ irreducible component $H$ of $B$ with $\coeff_H B \ge u$, 
		\item [\emph{(2)}] $S$ is a vertical$/Z$ irreducible component of $B$ with $\coeff_S B \ge t$, and
        \item [\emph{(3)}] the image $T$ of $S$ in $Z$ is a prime divisor on $Z$. 
	\end{itemize}
	Then $\ssdeg (S^{\nor}/Z)$ is bounded from above depending only on $u,t$.
\end{theorem}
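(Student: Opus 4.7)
The strategy has three ingredients: a dimension reduction, an application of $\mc{H}(1,u)$, and an MMP-and-complement reduction followed by a surface-type adjunction bound. First I would reduce to the three-dimensional situation $\dim X = 3$, $\dim Z = 2$, $\dim T = 1$ by cutting $Z$ with general Cartier hyperplane sections; generalised adjunction together with Bertini-type normality results ensures that the cut preserves the g-lc structure, keeps $H$ and $S$ as boundary components with coefficients $\ge u$ and $\ge t$ respectively, and keeps the image of $S$ a prime divisor on the cut base (an analogue of Lemma~\ref{fano-type-induction} in the present non-Fano-type setting). After this reduction, Lemma~\ref{num-irr-comp} gives $\ssdeg(S^{\nor}/Z) = \mf{n}(S/T)$, so the task becomes bounding the number of irreducible components of a general fibre of $S\to T$.

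Applying $\mc{H}(1,u)$ (Lemma~\ref{base-case-r-dim=1}) to $H$ produces a bound $N = N(u) \in \NN$ with $\mf{n}(H^{\nor}/Z) = \sdeg(H^{\nor}/Z) \le N(u)$. Since $H^{\nor}\to Z$ is generically finite and, after replacing $Z$ by its normalisation, $Z$ is regular at $\eta_T$, Lemma~\ref{finite-fibre} allows me to shrink $Z$ around $\eta_T$ so that $H^{\nor}\to Z$ is finite over this open locus; hence for a general closed point $z\in T$, the geometric fibre $H_z$ consists of at most $N$ closed points $x_1,\dots,x_r$ with $r \le N$.

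The central step is an MMP-and-complement reduction producing an auxiliary pair $(X',C')\to Z$ such that $(X',C')$ is lc, both $S'$ and $H'$ (the birational transforms of $S$ and $H$) are components of $C'$ with coefficients $\ge t/2$ and $\ge u/2$ respectively, $S'$ supports set-theoretically the whole fibre of $X'\to Z$ over $T$, and each irreducible component of a general fibre of $S'\to T$ meets $H'$. Concretely, I would pass to a $\QQ$-factorial g-dlt model of $(X, B + \mb{M})$, run a carefully chosen MMP over $Z$ (for example a $(-S)$-MMP, which terminates since $-S$ is non-pseudoeffective over $Z$), and then apply boundedness of $n$-complements from \cite[Theorem~1.8]{B-Fano} for some $n = n(u,t) \in \NN$. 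The fullness of $S'$ together with the connectedness of geometric fibres of the contraction $f'\colon X' \to Z$ yields that for a general $z \in T$ the reduced fibre of $f'$ over $z$ equals $\supp(S'_z)$; combining this with the positive coefficient of $H'$ in $C'$ via a connectedness-principle-type argument for lc pairs forces each irreducible component of $S'_z$ to meet $H'_z = \Set{x_1, \dots, x_r}$. Finally, for each $x_i$, I take a general smooth Cartier curve $L \subset Z$ containing $z$ and form the normalised surface $Y := (f'^{-1}(L))^{\nor}$; by adjunction $(Y, C'|_Y)$ is a surface lc pair in which every irreducible component of $S'_z$ through $x_i$ appears as a component of $S'|_Y$ through $x_i$ with coefficient $\ge t/2$, so Lemma~\ref{surface-comp-bnd} bounds the number of such components by some $C(t)$. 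Combining yields $\mf{n}(S/T) = \mf{n}(S'/T) \le N(u) \cdot C(t)$, a bound depending only on $u, t$.

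The main technical obstacle is the MMP-and-complement reduction. Running an MMP over $Z$ on $-S$ without a Fano type assumption requires first passing to a $\QQ$-factorial g-dlt model, with termination of the $(-S)$-MMP controlled via ACC-type results on thresholds. Ensuring simultaneously that $S'$ is full over $T$, that $H'$ remains a boundary component with coefficient $\ge u/2$, and that the resulting pair is lc with both $\tfrac{t}{2}S'$ and $\tfrac{u}{2}H'$ appearing in $C'$ relies on boundedness of $n$-complements \cite[Theorem~1.8]{B-Fano}, which is the source of the dependence on both $u$ and $t$ in the final bound. A subtler point is verifying that every irreducible component of $S'_z$ actually meets $H'_z$: connectedness of $\supp(S'_z)$ alone is insufficient, and the positive coefficient of $H'$ in the lc pair plays a crucial role through the connectedness principle.
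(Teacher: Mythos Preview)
Your overall architecture---dimension reduction, $\mc{H}(1,u)$, then a surface bound via Lemma~\ref{surface-comp-bnd}---matches the paper's, but the central reduction step contains two genuine gaps.

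First, the claim that $-S$ is non-pseudo-effective over $Z$ is false: $S$ is vertical$/Z$, so $S|_F = 0$ on a general fibre $F$, whence $-S$ is numerically trivial over the generic point of $Z$ and in particular pseudo-effective$/Z$. A $(-S)$-MMP therefore does not end in a Mori fibre space, and without a Fano type assumption there is no general termination result guaranteeing it reaches a good minimal model either. Second, and more seriously, you invoke boundedness of complements \cite[Theorem~1.8]{B-Fano} to produce the lc pair $(X',C')$, but that theorem requires $X'$ to be of Fano type over $Z$, which is \emph{not} assumed in Theorem~\ref{fir-one-u-t}. (Fano type enters only later, in Theorem~\ref{vertical-case-fib-dim-one}.) So the complement step, which is supposed to be ``the source of the dependence on both $u$ and $t$'', is simply unavailable here.

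The paper sidesteps both problems by running the $(-H)$-MMP instead of the $(-S)$-MMP. Since $H$ is horizontal$/Z$, $-H$ is genuinely non-pseudo-effective$/Z$, and \cite[Lemma~4.4(1)]{B-Zhang} gives termination with a Mori fibre space $X'\to Z'$; because the relative dimension is one, $Z'\to Z$ is birational, so after shrinking near $\eta_T$ the divisor $H'$ is ample$/Z$. Ampleness of $H'$ then \emph{directly} forces $H'$ to meet every irreducible component of $S'_z$---no connectedness principle or complement is needed. If $S$ happens to be contracted by the MMP, the paper tracks the centre $C'$ of $S$ through the MMP steps (using that each step is $(-H)$-negative) to show $C'\subset H'$, then extracts $S$ on a model $X''\to X'$; writing $\pi^*H' = aS'' + H''$ with $a>0$ gives $H''$ ample$/X'$, again forcing the intersection. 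The existing boundary $B''$ (no complement taken) already carries $S''$ with coefficient $\ge t$, so Lemma~\ref{surface-comp-bnd} applies on the hyperplane-section surface. Your ``connectedness-principle-type argument'' is thus replaced by a one-line ampleness argument, and the need for complements disappears entirely.
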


\begin{proof}
    Taking a $\QQ$-factorial g-dlt model, we can assume that $(X, B + \mb{M})$
    is a $\QQ$-factorial g-dlt g-pair.
    Moreover, by taking general Cartier hyperplane sections of $Z$, we can assume that
    $\dim X = 3$, $\dim Z = 2$, and $\dim T = 1$ by the same proof of Lemma~\ref{fano-type-induction}. 

    \medskip
	
	\emph{Step 1}.  
    By Lemma~\ref{base-case-r-dim=1}, 
	$\sdeg (H^{\nor}/Z)$ is bounded from above depending only on $u$.
    By \cite[Lemma 4.4 (1)]{B-Zhang}, we can
	run $\phi\colon X\dashrightarrow X'$ a $(-H)$-MMP over $Z$ that ends with a Mori fibre space
	$f'\colon X'\to Z'$ over $Z$.  Then the pushdown $H'$ of $H$ to $X'$ is ample$/Z'$.
	Note that the morphism $Z'\to Z$ is a birational contraction of
	normal surfaces as $\dim (X/Z) = 1$.  
    Shrinking $Z$ near the generic point of $T$, we can assume that $Z'\to Z$
    is an isomorphism of nonsingular surfaces.
	Denote by $B'$ the pushdown of $B$ to $X'$.
	Then we also have $K_{X'} + B' + \mb{M}_{X'}\sim_{\RR}0/Z'$ and $\coeff_{H'} B' \ge u$;
    see Lemma~\ref{CY-log-discre-no-change}. 

    \medskip

    Note that $S$ may be contracted by $\phi\colon X\dashrightarrow X'/Z$.
    In Steps 2 to 5, we treat the case that $S$ is contracted by $\phi$;
    in Step 6, we treat the case that $S$ is not contracted.

    \medskip
	
	\emph{Step 2}.
	Assume that $S$ is contracted by $\phi$.
	Denote by $C'$ the centre of $S$ on $X'$.  We show that $C'$ 
	is contained in $H'$.
	Write the sequence of $(-H)$-MMP over $Z$ as
	\[ X = X_0\xdashrightarrow{\phi_0} X_1 \xdashrightarrow{\phi_1} X_2 \dashrightarrow \cdots \xdashrightarrow{\phi_{n-1}} X_n = X'. \]
    Denote by $S_{k+1} := (\phi_k)_* S_k$ inductively with $S_0 := S$ for $0\le k \le n-1$.
    Let $H_k$ be the pushdown of $H$ to $X_k$ for every $0\le k \le n$;
    in particular, $H_0 = H$ and $H_n = H'$.
    
	Assume that $S_i$ is a divisor on $X_i$ and that $\phi_i\colon X_i \dashrightarrow X_{i+1}$ contracts
	$S_i$.  Then $\phi_i$ is a divisorial contraction and $\supp S_i$ is equal to the
	exceptional locus of $\phi_i$.
	Denote by $C_{k}$ the centre of $S$ on $X_k$.
	Note that $C_k = \supp S_k$ for $0\le k \le i$, but $S_k = 0$ for $k\ge i+1$.  
	Assume that the generic point $\eta_{C_{i+1}}$ of $C_{i+1}$
	is not contained in $H_{i+1}$.
	Let $F_i$ be a general fibre of $S_i \to C_{i+1}$, then $H_i\cdot F_i = 0$.
    As $\phi_i$ contracts a $(- H_i)$-negative extremal ray, we have
	$(-H_i)\cdot F_i <0$, that is, $H_i\cdot F_i > 0$, which contradicts the assumption that $H_i \cdot F_i = 0$.
	Thus, $C_{i+1}$ is contained in $H_{i+1}$.
	
	If $\phi_{i+1}\colon X_{i+1} \dashrightarrow X_{i+2}$ is a divisorial contraction, 
	then $C_{i+2}$ is also contained in $H_{i+2}$ since $C_{i+1}\subset H_{i+1}$.
	Assume that $\phi_{i+1}$ is a flip and
    that $C_{i+1}$ is a flipping curve;
    otherwise, $\phi_{i+1}$ is an isomorphism near $\eta_{C_{i+1}}$.  Then $C_{i+2}$ is a flipped curve, hence
	\[ H_{i+1}\cdot C_{i+1} > 0 \text{ and } (- H_{i+2})\cdot C_{i+2} >0, \]
	which implies that $H_{i+2}\cdot C_{i+2} < 0$.  Thus, we can conclude that $C_{i+2}$ is contained in $H_{i+2}$.
	Therefore, inductively, $C' = \centre_{X'} S$ is contained in the divisor $H'$.

    \medskip
	
	\emph{Step 3}.  
    As $(X', (B'-(\coeff_{H'}B')H') + \mb{M})$ is also a $\QQ$-factorial g-dlt g-pair, the pair $(X', 0)$ is klt.
    By \cite[Lemma 4.6]{B-Zhang}, let $\pi\colon X''\to X'$ be a birational contraction extracting only 
	the valuation of $S$, where $X''$ is $\QQ$-factorial.
	\[\xymatrix{
	  & X''\ar[d]^{\pi} \\
	X\ar@{-->}[r]^{\phi}\ar[d]_f & X'\ar[dl]^{f'} \\
	Z & 
	}\]
	By Step 2, we can write
	\[ \pi^* H' = a S'' + H'', \]
	where $S'' := \centre_{X''} S$, $H'' := \centre_{X''} H$ (i.e., the birational transform of $H'$ to $X''$),
	and $a$ is a positive rational number.  Then $H''\sim_{\QQ} -a S''/X'$ is ample over $X'$.
 
	On the other hand, consider the morphisms of integral surfaces
	$H''\to H' \to Z$.
    Since $H\dashrightarrow H'$ and $H''\to H'$ are birational, by Lemma~\ref{num-irr-comp}, we see that
    \[ \sdeg((H'')^{\nor}/Z) = \sdeg (H^{\nor}/Z). \]
	Let $U\subseteq Z$ be an open subset for $H''\to Z$ as in Lemma~\ref{finite-fibre}.
	Denote by $\eta_T$ the generic point of $T$, then $\eta_T\in U$ as $\codim(Z\setminus U, Z) \ge 2$.  
    Set $U_T := U\cap T \subseteq T$.
	Then we have
	\[ \mf{n}(H''_z)\le \sdeg (H^{\nor}/Z) \]
	for every closed point $z\in U_T$.

    \medskip
	
	\emph{Step 4}.  Pick a general hyperplane section $L$ of $Z$.
	By Bertini's theorem on irreducibility and smoothness (cf. \cite[Th\'eor\`eme 6.10]{jou-bertini}), 
	$L$ is an irreducible nonsingular curve that intersects $U_T$
	at general points of $T$.
	Denote by $X_L''$ the fibre product $L\times_Z X''$.
	By Bertini's theorem on normality (see \cite[Corollary 3.4.9]{normal-bertini}), 
    $X_L''$ is a normal, irreducible and $\QQ$-Gorenstein surface,
	and $X_L'' \to L$ is a contraction of normal varieties.  
    Write
	\[ K_{X''} + B'' + \mb{M}_{X''}  = \pi^*(K_{X'} + B' + \mb{M}_{X'}). \]
	Denote by $S''_L, H''_L$ the restrictions of
	$S'', H''$ to the surface $X_L''$ respectively.  
    By generalised adjunction (see \cite[Definition 4.7]{B-Zhang}), we can write
	\[ K_{X_L''} + B''_L + \mb{N}_{X_L''} \sim_{\RR} (K_{X''} + B'' + X_L'' + \mb{M}_{X''})|_{X_L''}, \]
    where $(X_L'', B_L'' + \mb{N})$ is also a g-lc g-pair.
    By \cite[Remark 1.15]{Filipazzi-bnd-g-pair-surface},
    $(X_L'', B_L'')$ is a $\QQ$-Gorenstein surface lc pair.
	Note that $H''_L$ is a horizontal$/L$ irreducible component of $B''_L$ by \cite[Th\'eor\`eme 6.10]{jou-bertini}. 

    \medskip

    \emph{Step 5}.
	Pick a closed point $z\in L\cap U_T$ that is a general closed point of $T$.
	Let $S''_z$ be the fibre of $S''\to T$ over $z$.
	Notice that $S''_z$ is a finite union of closed fibres of $S''\to C'$.
	As $H''$ is ample$/X'$, $H''$ intersects every irreducible component of $S_z''$, hence
	$H_L''$ intersects every irreducible component of $S_z''$.  
	Let $H''_{L,z}$ be the fibre of $H''_L\to L$ over $z$.
	Then $\red (H''_{L,z})$ consists of at most $\sdeg (H^{\nor}/Z)$ closed points by Step 3.
	Write
	\[ \red (H''_{L,z}) = \Set{x_1, \dots, x_r} \]
	for a natural number $r\le \sdeg (H^{\nor}/Z)$, 
    which is bounded from above depending only on $u$ by Lemma~\ref{base-case-r-dim=1}; see Step 1.
    Then every irreducible component of $S''_z$
	intersects at least one of the $x_i$'s.
	On the other hand, by Lemma~\ref{CY-log-discre-no-change}, and by the construction in \cite[Definition 4.7]{B-Zhang},
    every irreducible component of $S_z''$ has coefficient $\ge t$ in $B_L''$.
    Fix an arbitrary $x_i$ for $1\le i\le r$.
    By Lemma~\ref{surface-comp-bnd}, the number of irreducible components of $B_L''$
    passing through $x_i$ and having coefficients $\ge t$ is bounded from above depending only on $t$.  Therefore, 
    \[ \ssdeg (S^{\nor}/Z) = \mf{n}(S_z'') \] 
    is bounded from above depending only on $u,t$. 

    \medskip

    \emph{Step 6}.
    Keep the notation in Step 1.  Assume that $S$ is not contracted by $\phi$.
    Pick a general closed point $z\in T$.
    As $H'$ is ample$/Z$, $H'$ intersects every irreducible component of $S_z'$,
    where $S'$ is the pushdown of $S$ to $X'$, and $S'_z$ is the fibre of $S'$ over $z$.
    Then the rest of the proof follows from the same argument in Steps 4 and 5.
\end{proof}


\subsection{Proof of $\mc{V}(1,t)$}\label{pf-V-1-t}

Recall that a \emph{Fano type (generalised log Calabi-Yau) fibration} means 
a generalised log Calabi-Yau fibration $(X, B + \mb{M})\to Z$ whose underlying variety 
$X$ is of Fano type over $Z$.
Now we show that for Fano type fibrations, we can drop the requirement in Theorem~\ref{fir-one-u-t} on the
existence of a horizontal$/Z$ irreducible component whose coefficient is bounded from below;
see Theorem~\ref{fir-one-u-t} (1).

\begin{theorem}[$\mc{V}(1, t)$]\label{vertical-case-fib-dim-one}
    Let $t\in \RR^{>0}$.  
    Let $f\colon (X, B + \mb{M}) \to Z$ be a Fano type fibration
    of relative dimension one.
    Let $S$ be a vertical$/Z$ irreducible component of $B$ such that
    \begin{itemize}
        \item the coefficient of $S$ in $B$ is $\ge t$, and
        \item the image $T$ of $S$ is a prime divisor on $Z$.
    \end{itemize}
    Then $\ssdeg (S^{\nor}/Z)$ is bounded from above depending only on $t$.
\end{theorem}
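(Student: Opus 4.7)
The plan is to reduce $\mc{V}(1, t)$ to the already-established Theorem~\ref{fir-one-u-t} by manufacturing a horizontal$/Z$ component of the boundary whose coefficient is bounded below by some $u = u(t) \in (0,1]$. The Fano type assumption on $X \to Z$ enters precisely at this step and is what distinguishes $\mc{V}(1,t)$ from Theorem~\ref{fir-one-u-t}.

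I would first pass to a $\QQ$-factorial g-dlt model of $(X, B + \mb{M})$ (the Fano type of $X/Z$ is preserved by Lemma~\ref{fano-type-at-dlt-model}), take general Cartier hyperplane sections of $Z$ to arrive at $\dim X = 3$, $\dim Z = 2$ and $\dim T = 1$ via Lemma~\ref{fano-type-induction}, and decrease $t$ slightly so that $t \in \QQ$. Next I would apply boundedness of complements \cite[Theorem 1.8]{B-Fano} (in its generalised version) to the g-lc pair $(X, tS + \mb{M})$ over a neighbourhood of $\eta_T$: the assumption that $X/Z$ is of Fano type, combined with the relative pseudo-effectivity $-(K_X + tS + \mb{M}_X) \sim_{\RR} B - tS \ge 0$ over $Z$, yields $n \in \NN$ depending only on $t$ and a monotonic $n$-complement. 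Running a $-H$-MMP over $Z$ on the anticipated horizontal component $H$ of the complement, and possibly extracting the valuation of $S$, I arrive at an lc pair $(X', C')$ and a birational map $X \dashrightarrow X'$ over $Z$ (with $X'/Z$ still of Fano type by Lemma~\ref{Fano-type-pushdown} and Lemma~\ref{fano-type-at-dlt-model}) such that: the centre $S' = \centre_{X'} S$ is a prime divisor with $\coeff_{S'} C' \ge t$; $n(K_{X'} + C') \sim 0$ over a neighbourhood of $\eta_T$, so $nC'$ is integral; and $C'$ carries a horizontal$/Z$ component $H'$ with $\coeff_{H'} C' \ge 1/n$. The existence of $H'$ follows by restricting to a general fibre $F \cong \PP^1$ of $X' \to Z$: the equation $K_F + C'|_F \sim_{\QQ} 0$ together with $\deg K_F = -2$ forces $C'$ to have horizontal support, and the integrality of $nC'$ forces coefficient $\ge 1/n$. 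The $-H$-MMP moreover arranges that every irreducible component of a general fibre of $S' \to T$ meets $H'$.

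Finally I would apply Theorem~\ref{fir-one-u-t} to $(X', C') \to Z$ with horizontal parameter $u := 1/n$ and vertical parameter $t$, obtaining a bound on $\ssdeg((S')^{\nor}/Z)$ depending only on $u, t$, hence only on $t$. Since $\ssdeg(S^{\nor}/Z) = \ssdeg((S')^{\nor}/Z)$ by Remark~\ref{bir-map-irr-comp} and Lemma~\ref{num-irr-comp}, the theorem follows. The main obstacle is the intermediate complement construction: one must ensure simultaneously that the complement contains $tS'$ \emph{and} carries a horizontal$/Z$ component of coefficient bounded below in terms of $t$. Without the Fano type assumption the nef $\bdiv$-divisor $\mb{M}$ could account for all the positivity needed on a general fibre (for instance if $\deg \mb{M}_X|_F = 2$), so the effective divisor of the complement could vanish on $F$ and no horizontal component would appear; it is the big$/Z$ structure furnished by Fano type that rules this out and guarantees the horizontal component $H'$ with the required coefficient lower bound.
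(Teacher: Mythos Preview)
Your overall strategy --- reduce to Theorem~\ref{fir-one-u-t} by producing, via complements, a horizontal$/Z$ boundary component with coefficient bounded below --- matches the paper. But there is a genuine gap in how you handle $\mb{M}$.

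You take a complement of the \emph{generalised} pair $(X, tS + \mb{M})$; such a complement would give $n(K_{X'} + C' + \mb{M}_{X'}) \sim 0/Z$, yet two lines later you write $n(K_{X'} + C') \sim 0$ and, on a general fibre, $K_F + C'|_F \sim_{\QQ} 0$. The $\mb{M}$ has silently disappeared. With $\mb{M}$ kept, the fibre equation reads $K_F + C'|_F + \mb{M}_X|_F \sim_{\QQ} 0$, and as you yourself point out in the last paragraph, $\mb{M}_X|_F$ may well have degree $2$, forcing $C'|_F = 0$. Your closing assertion that ``the big$/Z$ structure furnished by Fano type rules this out'' is exactly the point that needs proof, and it is not true as stated: Fano type gives $-K_X$ big$/Z$, hence $C' + \mb{M}_X$ big$/Z$, but says nothing about $C'$ alone being big$/Z$.

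The paper's remedy is to drop $\mb{M}$ \emph{before} taking complements. First run a $(-S)$-MMP over $Z$ so that the pushforward $S'$ supports the entire fibre over $T$ (writing $S' = b\cdot g^*T'$); this step is needed to guarantee $S'$ survives the next MMP. Then run a $(-K_{X'} - tS')$-MMP over $Z$ to reach $X''$ with $-(K_{X''} + tS'')$ nef$/Z$, and apply \cite[Theorem~1.8]{B-Fano} to the \emph{ordinary} pair $(X'', tS'')$. Now the complement $B^+$ satisfies $B^+ \sim_{\QQ} -K_{X''}/Z$, which is big$/Z$ precisely because $X''$ is of Fano type over $Z$; this is what forces the horizontal component $H''$ with coefficient $\ge 1/n$, and Theorem~\ref{fir-one-u-t} then finishes the proof. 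Your $(-H)$-MMP and extraction of $S$ are internal to the proof of Theorem~\ref{fir-one-u-t} and are not needed here once you invoke that theorem as a black box.
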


\begin{proof}
    Shrinking $Z$ near the generic point of $T$, we can assume that $Z$ is nonsingular.
    By Lemma~\ref{fano-type-induction}, we can then assume that $\dim X = 3$, $\dim Z = 2$ and $\dim T = 1$.
    Moreover, by Lemma~\ref{fano-type-at-dlt-model},
    taking a $\QQ$-factorial g-dlt model, we can also assume that
    $(X, B + \mb{M})$ is a $\QQ$-factorial g-dlt g-pair.

    As $X$ is of Fano type over $Z$, we can run $\phi\colon X\dashrightarrow X'$
    a $(-S)$-MMP over $Z$ that ends with a good minimal model $X'$ of $-S$.  
    By negativity lemma, $S$ is not contracted by $\phi$.
    Set $S' := \phi_* S$.
    Let $g\colon X'\to Z'$ be the contraction induced by $-S'$.
    As $S'$ is vertical$/Z$, 
    the contraction $Z'\to Z$ is birational. 
    Let $T'$ be the birational transform of $T$ to $Z'$.
    As $-S'$ is nef$/Z'$, $S'$ is the unique irreducible component of $\supp (g^* T')$.
    Then we can write
    \begin{equation}
        S' = b\cdot g^* T' \label{S-whole-fibre-3}
    \end{equation}
    for some $b\in \QQ^{>0}$.   
    Shrinking $Z$ near the generic point of $T$, we can identify $(Z', T')$ with $(Z, T)$.
    Set $B' := \phi_* B$.
    By Lemma~\ref{CY-log-discre-no-change}, we have 
    \[ \coeff_{S'} B' = \coeff_S B \ge t. \]
    Moreover, by Lemma~\ref{Fano-type-pushdown}, $(X', B' + \mb{M})\to Z$ is also a Fano type fibration.  
    Decreasing $t$ if necessary, we can assume that $t\in \QQ^{>0}$.
    Run $\varphi\colon X'\dashrightarrow X''$ a $(- K_{X'} - t S')$-MMP 
    over $Z$ that ends with a minimal model $X''\to Z$ of $-K_{X'} - t S'$, then $-(K_{X''}+ t S'')$ is nef$/Z$,
    where $S'' := \varphi_* S'$.
    By \eqref{S-whole-fibre-3}, $S'$ is not contracted by $\varphi$. 

    By Lemma~\ref{Fano-type-pushdown}, $X''$ is also of Fano type over $Z$.
    Thus, by \cite[Theorem 1.8]{B-Fano}, up to shrinking $Z$ near the generic point of $T$, 
    $K_{X''} + t S''$ has a monotonic $n$-complement $K_{X''} + B^+$
    for some bounded $n\in \NN$ depending only on $t$, that is,
    \begin{itemize}
        \item $(X'', B^+)$ is an lc pair,
        \item $n(K_{X''} + B^+)\sim 0/Z$, and
        \item $t S''\le B^+$.
    \end{itemize}
    As $-K_{X''}$ is big$/Z$, $B^+$ admits a horizontal$/Z$ irreducible component $H''$ with coefficient $\ge 1/n$.
    Applying Theorem~\ref{fir-one-u-t} to the log Calabi-Yau fibration
    $(X'', B^+)\to Z$, we see that $\ssdeg (S^{\nor}/Z)$ is bounded from above depending only on $t$.
\end{proof}


\section{SS-degree of divisors on Fano type fibrations}\label{pf-V-d-t}

We prove $\mc{V}(d,t)$ (=Theorem~\ref{vertical-bnd-intro}) in this section.
The proof proceeds by induction on the relative dimension $d\in \NN$.
By Theorem~\ref{vertical-case-fib-dim-one}, $\mc{V}(1,t)$ holds.
Assuming $\mc{V}(k, t)$ for $1\le k\le d-1$, we prove $\mc{V}(d, t)$ via the methods 
from the proof of the main theorem of \cite{birkar2024irrationality}. 

The following notation denotes a family of fibrations that appears
in the proof of $\mc{V}(d,t)$.  We collect these notation here to simplify statements in the proof.

\begin{definitionnotation}\label{notation-bnd-family}
    Let $d, n\in \NN$ and let $\epsilon\in \RR^{>0}$.
    Denote by $\CY_{d, \epsilon, n}$ the set of data $(X/Z, B)$, where
    \begin{itemize}
        \item $X\to Z$ is a contraction of normal varieties,
        \item the relative dimension of $X\to Z$ is equal to $d$, 
        \item $(X, B)$ is an lc pair, 
        \item $X$ is $\epsilon$-lc over the generic point $\eta_Z$ of $Z$,
        \item the coefficients of $B$ belong to $\mb{T}_n := \Set{m/n}_{1\le m\le n}$, 
        \item $-K_X$ is ample$/Z$, and
        \item $K_X + B \sim_{\QQ} 0/Z$.
    \end{itemize}
\end{definitionnotation}

\begin{definitionnotation}\label{notation-bnd-family-curves}
    Let $d, n\in \NN$ and let $\epsilon\in \RR^{>0}$.
    Denote by $\CY_{d, \epsilon, n}^{\cur}$ the subset of $\CY_{d, \epsilon, n}$,
    where the base $Z$ of every $(X/Z, B)$ in $\CY_{d, \epsilon, n}^{\cur}$
    is a nonsingular curve.
\end{definitionnotation}


\subsection{Modification to relatively bounded couples}\label{modify-to-bnd-family}

In this subsection, we show that $\CY_{d, \epsilon, n}$ in Notation~\ref{notation-bnd-family}
can be modified to be a relatively bounded family of couples whose general fibres are log smooth; see \S\ref{log-smooth}.
The results in this subsection generalise slightly the lemmas in \cite[\S 4.1]{birkar2024irrationality}
by allowing the bases of fibrations to be higher-dimensional normal varieties,
not just nonsingular curves as in \cite[\S 4.1]{birkar2024irrationality}.

\begin{lemma}[cf. \protect{\cite[Lemma 4.2]{birkar2024irrationality}}]\label{bir-to-rbnd-couples}
    Let $d, n \in \NN$ and let $\epsilon\in \RR^{>0}$.
    There is a relatively bounded family $\mc{P}$ of couples such that for each 
    $f\colon (X, B)\to Z$ in $\CY_{d, \epsilon, n}$,
    we have a couple $(Y/Z_f, D)\in \mc{P}$, where $Z_f$ is an open subset of $Z$ with
    $\codim (Z\setminus Z_f, Z) \ge 2$, and
    a birational map $\phi\colon X'\dashrightarrow Y$ over $Z_f$,
    where $X' := Z_f\times_Z X$, such that
    \begin{itemize}
        \item [\emph{(1)}] the reduced divisor $D$ is horizontal$/Z_f$,
    	\item [\emph{(2)}] $\phi$ is an isomorphism over the generic point of $Z_f$, and
    	\item [\emph{(3)}] $\supp B'$ is mapped isomorphically to $\supp D$ over the generic point of $Z_f$, where $B'$ is the restriction of $B$ to $X'$.
    \end{itemize}
\end{lemma}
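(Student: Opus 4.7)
The plan is to combine boundedness of $\epsilon$-lc Fano varieties with a spreading-out argument at the generic point of $Z$, followed by extension across codimension one points via Lemma~\ref{bir-universal-family}.

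First I analyse the geometric generic fibre. For each $(X/Z, B)\in \CY_{d,\epsilon,n}$, the geometric generic fibre $X_{\ol{\eta_Z}}$ is an $\epsilon$-lc Fano variety of dimension $d$, and $B_{\ol{\eta_Z}}$ is effective with $K_{X_{\ol{\eta_Z}}} + B_{\ol{\eta_Z}} \sim_\QQ 0$ and coefficients in the finite set $\mb{T}_n$. By \cite[Theorem 1.1]{B-BAB}, $X_{\ol{\eta_Z}}$ belongs to a bounded family of projective varieties; combined with the numerical constraint $B_{\ol{\eta_Z}} \sim_\QQ -K_{X_{\ol{\eta_Z}}}$ and the coefficient restriction, the couple $(X_{\ol{\eta_Z}}, \supp B_{\ol{\eta_Z}})$ belongs to a bounded family of projective couples depending only on $d, \epsilon, n$.

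Next I spread out. Fix finitely many universal couples witnessing the above boundedness, parametrised by varieties $T_j$. For each $(X/Z, B)$, the geometric generic fibre determines a rational map $Z\dashrightarrow T_j$ for some $j$, defined at $\eta_Z$. Pulling the universal couple back along an open $U\subseteq Z$ containing $\eta_Z$ where this map is defined produces a couple $(Y^0/U, D^0)$ with a birational map $\phi^0\colon U\times_Z X\dashrightarrow Y^0$ over $U$ that is an isomorphism over $\eta_Z$ and maps $\supp B|_{U\times_Z X}$ isomorphically onto $D^0$ there. Conditions (1)--(3) hold over $U$, but $U$ might a priori have small complement in $Z$.

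The crucial step is to enlarge $U$ to an open $Z_f$ with $\codim(Z\setminus Z_f, Z)\ge 2$. Since $X\to Z$ is a contraction of normal varieties, $Z$ is normal, so each codimension one point $z\in Z$ has DVR local ring. Applying Lemma~\ref{bir-universal-family} to the generically relatively bounded system $\Set{(Y^0/U, D^0)}$ constructed above, after shrinking $Z$ around each such $z$ the couple extends as a pullback of one of finitely many reference couples, yielding a flat couple on a neighborhood of $z$. These local extensions glue uniquely with $(Y^0/U, D^0)$ on common open subsets by the valuative criterion in the DVRs. Taking $Z_f$ to be $U$ together with these neighborhoods produces the desired $(Y/Z_f, D)$, and the resulting family $\mcP$ is relatively bounded by Lemma~\ref{bir-universal-family}. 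The main obstacle is consistency: ensuring that the local extensions at distinct codimension one points, together with the generic spreading over $U$, assemble into a single couple in a single relatively bounded family $\mcP$. This is handled by the finiteness of reference couples (from BAB) combined with the DVR property at codimension one of the normal base $Z$, which makes the gluing unique and the global family relatively bounded rather than merely generically so.
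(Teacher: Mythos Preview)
Your overall strategy matches the paper's: use BAB to bound the generic fibre, spread out via a universal family, then extend across codimension one using normality of $Z$. However, your extension step has a genuine gap.

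You propose to apply Lemma~\ref{bir-universal-family} to the system $\Set{(Y^0/U, D^0)}$ at each codimension one point $z\in Z$. But the base of this couple is $U$, not $Z$; the lemma tells you nothing about points $z\notin U$, which are precisely the points across which you need to extend. What you really need is to extend the morphism $U\to T_j$ (coming from the spreading-out) across each codimension one point of $Z$. You invoke ``the valuative criterion in the DVRs'', but the valuative criterion requires the target to be \emph{proper}, and the parameter varieties $T_j$ coming out of Lemma~\ref{bir-universal-family} are only quasi-projective. Without compactifying them, the rational map $Z\dashrightarrow T_j$ need not extend across a given codimension one point, and your local extensions simply may not exist.

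The paper fixes this by inserting one extra step you omit: after obtaining the finitely many universal couples $(V_i,C_i)\to T_i$, it takes projective compactifications $(\wt V_i,\wt C_i)\to\wt T_i$. Then, because $Z$ is normal, the morphism $U_f\to T_i\hookrightarrow\wt T_i$ extends by the valuative criterion to a morphism $Z_f\to\wt T_i$ defined on an open set with $\codim(Z\setminus Z_f,Z)\ge 2$, and $(Y/Z_f,D)$ is just the pullback of the compactified universal couple. This yields a single couple globally on $Z_f$ with no gluing needed; your stated ``main obstacle'' of consistency between local patches evaporates once the extension is done this way. Add the compactification step and drop the gluing argument, and your proof becomes essentially the paper's.
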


\begin{proof}
    The proof works almost the same as \cite[Lemma 4.2]{birkar2024irrationality}.
    For convenience of the readers, we include a sketchy proof of this result
    to indicate necessary modifications to the original proof of \cite[Lemma 4.2]{birkar2024irrationality}.

    \medskip
    
    \emph{Step 1}.
    By \cite[Theorem 1.1]{B-BAB}, there exists an $\ell\in \NN$ depending only on
    $d, \epsilon$ such that 
	for each $f\colon (X, B)\to Z$ in $\CY_{d, \epsilon, n}$,
    there is an open subset $U_f$ of $Z$ satisfying
    \begin{itemize}
        \item every closed fibre of $f$ over $U_f$ is an $\epsilon$-lc Fano variety of dimension $d$,
        \item $A := -\ell K_X$ is very ample$/U_f$, and
        \item the relative degree $\deg_{A/Z} A$ is bounded from above depending only on $d, \epsilon$.
    \end{itemize}
    Denote by $\CY_{d, \epsilon, n}^{\circ}$ the set of all morphisms that are
    restrictions of $f\colon X\to Z$ over $U_f$, where $f$ is the underlying morphism of varieties
    for some $(X/Z, B)\in \CY_{d, \epsilon, n}$.
	Then $\CY_{d, \epsilon, n}^{\circ}$ is a generically relatively bounded family of varieties.

    \medskip
	
	\emph{Step 2}.
	For every $f\colon (X, B)\to Z$ in $\CY_{d, \epsilon, n}$, we have that 
    \[ (A|_{X_s})^{d-1}\cdot (-n K_{X_s})\le r, \]
    where $X_s$ is a general fibre of $X\to Z$ and $r$ is a natural number depending only on 
    $d, \epsilon, n$.  Denote by $C$ the reduced divisor supporting 
    on the horizontal part of $B$.  Since
    the coefficients of $B$ belong to $\mb{T}_n$, $nB-C$
    is an effective integral divisor.  Moreover, as
    \[ C + (n B - C) \sim_{\QQ} -nK_X/Z, \]
    we also have 
    \[ \deg_{A/Z} C =  (A|_{X_s})^{d-1}\cdot (C|_{X_s})\le (A|_{X_s})^{d-1}\cdot \big((-nK_X)|_{X_s}\big)
    = (A|_{X_s})^{d-1}\cdot (- n K_{X_s})\le r. \]
    Denote by $\mc{P}^{\circ}$ the family of all the couples 
    $\big(f^{-1}(U_f)/U_f, C|_{f^{-1}(U_f)}\big)$.  
    Then $\mc{P}^{\circ}$ is a generically relatively bounded family of couples.
    By Lemma~\ref{bir-universal-family}, there are finitely many projective morphisms $V_i\to T_i$ 
	of varieties and reduced divisors $C_i$ on $V_i$ such that each couple in $\mc{P}^{\circ}$
	is a base change of some $(V_i, C_i)\to T_i$ (after shrinking
	the base $U_f$ near its generic point if necessary).  Then for every $f\colon (X, B)\to Z$
	in $\CY_{d, \epsilon, n}$, we can assume that there 
	is a morphism from $U_f$ to some $T_i$
	such that $f^{-1}(U_f)$ (respectively, $C|_{f^{-1}(U_f)}$)
	is equal to the fibre product $U_f\times_{T_i} V_i$
	(respectively, $U_f\times_{T_i} C_i$).

    \medskip
	
	\emph{Step 3}.
	For every $T_i$, let $\wt{T}_i$ be a projective compactification of $T_i$.  
	Embed $V_i\to T_i$ into some $\PP^N\times T_i$; 
	denote by $\wt{V}_i$ (respectively, by $\wt{C}_i$) the scheme-theoretic closure of 
	$V_i$ (respectively, of $C_i$) in $\PP^N\times \wt{T}_i$.  
	Then $(\wt{V}_i, \wt{C}_i)\to \wt{T}_i$ is a projective compactification
	of $(V_i, C_i)\to T_i$, where $\wt{V}_i$, $\wt{C}_i$, and $\wt{T}_i$ are all projective schemes over $\mbb K$.  
	As $Z$ is normal, valuative criterion of properness implies that 
    the morphism $U_f \to T_i$ extends to a morphism $Z_f\to \wt{T}_i$
    for some open subset $Z_f$ of $Z$ satisfying
    \[ \codim (Z\setminus Z_f, Z) \ge 2. \]
	Denote by $\mc{P}$ the set of all couples $(Y/Z_f, D)$ constructed from
	base changes $Z_f\times_{\wt{T}_i} \wt{V}_i$ and $Z_f\times_{\wt{T}_i} \wt{C}_i$ 
	for morphisms $Z_f\to \wt{T}_i$ 
	as in \cite[Lemma 2.14]{birkar2024irrationality}.  Then $\mc{P}$ is generically relatively bounded.  
    By construction, every $f\colon (X, B)\to Z$ in $\CY_{d, \epsilon, n}$ admits
	a birational map $\phi\colon X'\dashrightarrow Y$ over $Z_f$ to some $(Y/Z_f, D)$ in $\mc{P}$,
    where $X' := Z_f\times_Z X$; moreover, $(X'/Z_f, \supp B')$ and $(Y/Z_f, \supp D)$ have the same general fibres,
    where $B'$ is the restriction of $B$ to $X'$.  
    By \S\ref{r-bnd-def}, up to shrinking $Z_f$ further
    (also satisfying that $\codim(Z\setminus Z_f, Z)\ge 2$), we see that $\mcP$ is relatively bounded.
\end{proof}


\begin{lemma}\label{bir-to-rlbnd-generic-sm-couples}
    Let $d, n \in \NN$ and let $\epsilon\in \RR^{>0}$.
    There exists a relatively bounded family $\CY_{d, \epsilon, n}^{\text{sm}}$ of couples 
    satisfying the following property.
    For every $f\colon (X, B)\to Z$ in $\CY_{d, \epsilon, n}$, there is
    a couple $(Y/Z_f, D)$ in $\CY_{d, \epsilon, n}^{\text{sm}}$ such that
    \begin{itemize}
        \item [\emph{(1)}] $Z_f$ is an open subset of $Z$ whose complement has codimension $\ge 2$ in $Z$, 
    	\item [\emph{(2)}] there is a birational map $\phi\colon X'\dashrightarrow Y$ over $Z_f$, where $X' := Z_f\times_Z X$, 
        \item [\emph{(3)}] $(Y, D)\to Z_f$ is generically log smooth (see \S\ref{log-smooth}),
       	\item [\emph{(4)}] $D$ is horizontal$/Z_f$, and
    	\item [\emph{(5)}] over the generic point of $Z_f$, we have the following:
    	      \begin{itemize}
    	      \item [\emph{(i)}] $\phi$ does not contract any divisor, and
    	      \item [\emph{(ii)}] the divisor $D$ supports on the union of $\supp \phi_* B'$ and the 
    	                          support of all horizontal$/Z_f$ exceptional divisors of $\phi^{-1}$,
                                  where $B'$ is the restriction of $B$ to $X'$.
    	      \end{itemize}
    \end{itemize}
\end{lemma}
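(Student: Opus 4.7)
The plan is to upgrade the relatively bounded family produced by Lemma~\ref{bir-to-rbnd-couples} via log resolution in families. Given $(X/Z,B)\in \CY_{d,\epsilon,n}$, Lemma~\ref{bir-to-rbnd-couples} supplies a relatively bounded family $\mc{P}$ of couples $(Y_0/Z_f,D_0)$ together with birational maps $\phi_0\colon X'\dashrightarrow Y_0$ over $Z_f$, where $\phi_0$ is an isomorphism over the generic point of $Z_f$ and $\supp B'$ maps isomorphically onto $\supp D_0$ on the generic fibre. Recall that $\mc{P}$ is controlled by finitely many universal models $V_i\to T_i$ together with reduced divisors $C_i$ on $V_i$.

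For each $i$, I would apply Hironaka's theorem to choose a projective birational morphism $V_i'\to V_i$ such that, setting $C_i'$ to be the birational transform of $C_i$ plus the horizontal$/T_i$ reduced exceptional divisor of $V_i'\to V_i$, the couple $(V_i',C_i')$ is log smooth over a dense open subset $T_i^\circ\subseteq T_i$. Applying Noetherian induction to the restricted family on $T_i\setminus T_i^\circ$ and iterating produces finitely many universal models $(V_{i,j}'/T_{i,j}',C_{i,j}')$ covering every morphism $Z_f\to T_i$ arising from Lemma~\ref{bir-to-rbnd-couples}. Define $\CY_{d,\epsilon,n}^{\text{sm}}$ as the collection of base changes $(Y/Z_f,D):=Z_f\times_{T_{i,j}'}(V_{i,j}',C_{i,j}')$, with associated birational map $\phi:=(Y\to Y_0)^{-1}\circ \phi_0\colon X'\dashrightarrow Y$; after possibly shrinking $Z_f$ further inside $Z$ (keeping $\codim(Z\setminus Z_f,Z)\ge 2$), one arranges that $Z_f\to T_{i,j}'$ sends the generic point of $Z_f$ into the log smooth locus.

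The required properties then follow essentially by construction. Property (3) is inherited from log smoothness of $(V_{i,j}',C_{i,j}')$ over a dense open of $T_{i,j}'$; property (4) holds since $C_i$ is horizontal over $T_i$ by Lemma~\ref{bir-to-rbnd-couples}(1) and, by construction, only horizontal$/T_i$ exceptional components are added. For (5), over the generic point of $Z_f$ the map $\phi_0$ is an isomorphism and $Y\to Y_0$ is birational, so $\phi$ contracts no divisor, which gives (i); meanwhile $\phi_*B'$ equals the birational transform of $D_0$ on $Y$, while the horizontal$/Z_f$ exceptional divisors of $\phi^{-1}$ coincide with the exceptional divisors of $Y\to Y_0$, and their joint support agrees with $\supp D$, giving (ii).

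The main technical hurdle is carrying out the Noetherian stratification of each $T_i$ while preserving relative boundedness and the codimension-$\ge 2$ condition on $Z\setminus Z_f$. Since only finitely many strata $T_{i,j}'$ appear, only finitely many universal couples are produced, so $\CY_{d,\epsilon,n}^{\text{sm}}$ is genuinely relatively bounded; and since each shrinking of $Z_f$ is by a suitable open subset and only finitely many shrinkings occur, the final $Z_f$ still has complement of codimension at least two in $Z$.
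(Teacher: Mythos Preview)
Your proposal is correct and follows essentially the same approach as the paper's proof: start from the relatively bounded family $\mc{P}$ of Lemma~\ref{bir-to-rbnd-couples} with its finitely many universal models, take log resolutions of these universal couples adding in the horizontal exceptional divisors, and use Noetherian induction on the base strata where generic log smoothness fails. The paper is slightly more explicit about taking the reduction of the main component of the fibre product and about the conditions on the open locus $U_i$ ensuring fibrewise birationality of the resolution, but these are routine details already implicit in your outline.
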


\begin{proof}
    Take the relatively bounded family of couples $\mc{P}$ in Lemma~\ref{bir-to-rbnd-couples}.
    By construction in the proof of Lemma~\ref{bir-to-rbnd-couples},
    there are finitely many projective morphisms $\wt{V}_i\to \wt{T}_i$
    of projective varieties and reduced horizontal$/\wt{T}_i$ divisors $\wt{C}_i$ satisfying that 
    for each $(\wt{Y}/Z_f, \wt{D}) \in \mc{P}$, there is a morphism $Z_f \to \wt{T}_i$
    such that $(\wt{Y}/Z_f, \wt{D})$ is the base change of $(\wt{V}_i, \wt{C}_i) \to \wt{T}_i$ via $Z_f\to \wt{T}_i$.
    Note that $Z_f$ is an open subset of $Z$ for some $f\colon (X, B)\to Z$ in $\CY_{d, \epsilon, n}$ 
    such that the complement of $Z_f$ in $Z$ has codimension $\ge 2$.  
	Take a log resolution $V^{\text{sm}}_i\to \wt{V}_i$ of the couple $(\wt{V}_i, \wt{C}_i)$.  
	Denote by $C^{\text{sm}}_i$ the reduced divisor supporting on the union of the birational 
	transform of $\wt{C}_i$ and all the horizontal$/\wt{T}_i$ exceptional divisors of $V^{\text{sm}}_i\to \wt{V}_i$.

    Denote by $\wt{E}_i$ the reduced exceptional divisor of $V_i^{\text{sm}}\to \wt{V}_i$.
	By generic smoothness and generic flatness, there exists a dense open
	subset $U_i\subset \wt{T}_i$ such that 
    \begin{itemize}
        \item $(V^{\text{sm}}_i, C^{\text{sm}}_i)$ is log smooth over $U_i$,
        \item for every closed point $t\in U_i$, $V^{\text{sm}}_{i,t}\to \wt{V}_{i,t}$ is a birational morphism whose (reduced) exceptional divisor is equal to $\wt{E}_{i,t}$, where $V^{\text{sm}}_{i,t}$, $\wt{V}_{i,t}$ and $\wt{E}_{i,t}$ are the fibres of $V^{\text{sm}}_i$, $\wt{V}_i$ and $\wt{E}_i$ over $t\in U_i$ respectively, and
        \item the images of all vertical$/\wt{T}_i$ irreducible components of $\wt{E}_i$ do not intersect $U_i$.
    \end{itemize}
	If the image of $Z_f\to \wt{T}_i$ is not entirely contained in $S_i := \wt{T}_i\setminus U_i$, 
	we can take the couple $(Y/Z_f, D)$ as the reduction of the main component of $Z_f\times_{\wt{T}_i}V^{\text{sm}}_i$ 
	equipped with the reduced divisor which is the reduction 
	of the horizontal$/Z_f$ part of $Z_f\times_{\wt{T}_i} C^{\text{sm}}_i$. 
    Up to shrinking $Z_f$ further (whose complement also has codimension $\ge 2$ in $Z$), 
    we can assume that $(Y/Z_f, D)$ is a flat couple.  Write $S_i$ as a union of finitely many 
	irreducible schemes, then the result follows by
	making a suitable stratification on $\mc{P}$ and doing induction on $\dim \wt{T}_i$.
\end{proof}


\subsection{Proof of $\mc{V}(d, t)$}\label{pf-vertical-bnd}

We are now ready to give the proof of Theorem~\ref{vertical-bnd-intro}.

\begin{theorem}[=Theorem~\ref{vertical-bnd-intro}]\label{vertical-bnd}
    Let $d\in \NN$ and let $t \in (0,1]$.  Let 
    \[ f\colon (X, B + \mb{M})\to Z \]
    be a Fano type generalised log Calabi-Yau fibration
    of relative dimension $d$.  
    Let $S$ be a vertical$/Z$ irreducible component of $B$ such that
    \begin{itemize} 
        \item the coefficient of $S$ in $B$ is $\ge t$, and
        \item the image of $S$ in $Z$ has codimension one in $Z$.
    \end{itemize}
    Then $\ssdeg (S^{\nor}/Z)$ is bounded from above depending only on $d, t$.
\end{theorem}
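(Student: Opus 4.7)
The plan is to induct on the relative dimension $d$. The base case $\mc{V}(1,t)$ is Theorem~\ref{vertical-case-fib-dim-one}, so fix $d\ge 2$ and assume $\mc{V}(k, t')$ for every $1\le k\le d-1$ and every $t'\in (0,1]$. Applying Lemma~\ref{fano-type-induction} to general Cartier hyperplane sections of $Z$, I would first reduce to $\dim Z = 2$ and $\dim T = 1$, where $T$ is the image of $S$, and by Lemma~\ref{fano-type-at-dlt-model} pass to a $\QQ$-factorial g-dlt model; decrease $t$ to a positive rational if necessary. Exploiting the Fano type hypothesis, I would run a $(-S)$-MMP$/Z$; negativity prevents $S$ from being contracted, and on the output the centre of $S$ supports the full fibre over $T$. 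A subsequent $(-K-tS)$-MMP$/Z$, combined with boundedness of complements (\cite[Theorem 1.8]{B-Fano}), yields a $\QQ$-factorial dlt pair $(X', C')$ with $X'$ of Fano type$/Z$, $n(K_{X'}+C')\sim 0/Z$ for some $n = n(d,t)\in \NN$, coefficients of $C'$ in $\mb{T}_n$, $\frac{t}{2} S' \le C'$, and $S'$ still supporting the whole fibre over $T$. ACC of lc thresholds then gives $\epsilon = \epsilon(d,t)>0$ with $(X',0)$ being $\epsilon$-lc over $Z\setminus T$. Finally, I would run a $K_{X'}$-MMP$/Z$, call it $\phi$, ending at a Mori fibre space $g\colon X''\to Z''/Z$; $S'' := \phi_* S'$ is nonzero since $S'$ supports the fibre over $T$. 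The argument then splits on $\dim(Z''/Z)$.

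If $\dim(Z''/Z)\ge 1$, adjunction for fibre spaces (\cite[\S 3.4]{B-Fano}) produces a g-lc g-pair $(Z'', C_{Z''}+\mb{N})$ with $Z''\to Z$ of Fano type by Lemma~\ref{fano-type-on-base}, and the image $T''$ of $S''$ in $Z''$ has coefficient $\ge t/2$ in $C_{Z''}$ by \cite[Lemma 3.7]{B-Fano}. Since both $\dim(X''/Z'')$ and $\dim(Z''/Z)$ are strictly less than $d$, I would apply the inductive hypothesis to $(X''/Z'', C'')$ and to $(Z''/Z, C_{Z''}+\mb{N})$ with threshold $t/2$, and combine the two bounds via Lemma~\ref{mul-g-fib} together with the birational invariance of Stein degree on normal models (Lemma~\ref{num-irr-comp}, Remark~\ref{bir-map-irr-comp}).

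If $\dim(Z''/Z) = 0$, I shrink $Z$ around $\eta_T$ to identify $Z''$ with $Z$, so $-K_{X''}$ is ample$/Z$ and the general fibres of $X''\to Z$ form a bounded family by \cite[Theorem 1.1]{B-BAB}. I would then invoke the relative toroidalisation construction from \cite{birkar2024irrationality}, using Theorem~\ref{functorial-toroidalisation} to base change over the codimension-one point $\eta_T$, to obtain a commutative diagram containing a strict toroidal morphism $\mu'\colon (U_{Y'}\subset Y')\to (U_Z\subset Z)$, a birational map $\varphi\colon X''\dashrightarrow Y$ over $Z$ contracting no horizontal$/Z$ divisor, and an ample$/Z$, base-point-free Cartier divisor $A$ on $Y'$ with $\deg_{A/Z}A$ bounded in terms of $d,t$ and $K_{Y'}+D'+A$ ample$/Z$. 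Set $C := \centre_{Y'} S$; then $C$ is an lc centre of the strict toroidal couple $(Y', D')$, and by \cite[Proposition 3.5]{birkar2024irrationality} the general fibre of $S\dashrightarrow C$ is irreducible, so Lemma~\ref{mul-g-fib} reduces the task to bounding $\mf{n}(C/T)$. For a general $z\in T$, flatness of $\mu'$ near $\eta_T$ (valid because $Z$ is regular there) makes $\mu'$ a well-defined family of cycles, and constancy of degree with respect to $A$ (\cite[Proposition I.3.12]{Kol-rc}) bounds $\mf{n}(Y'_z)$. For each component $G'$ of $Y'_z$ meeting $C_z$, adjunction
\[ K_G + D_G \sim_{\QQ} (K_{Y'}+D'+A+Y'_L)|_G, \]
with $Y'_L$ the pullback to $Y'$ of a general hyperplane $L\ni z$ of $Z$, exhibits $K_G + D_G$ as an ample Cartier divisor of bounded volume, so $(G, D_G)$ lies in a bounded family by \cite[Theorem 1.1]{HMX14}; moreover $C$ restricts to an lc centre of $(G, D_G)$ surjecting onto the given component of $C_z$, which bounds the number of such components contained in $G'$, and hence $\mf{n}(C/T)$. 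The main obstacle is this last case: it requires the toroidalisation of \cite{birkar2024irrationality} to extend to a higher-dimensional base (which is the content of Theorem~\ref{functorial-toroidalisation}) and the simultaneous use of constancy of degree with HMX boundedness of log pairs to hold uniformly after shrinking $Z$ near $\eta_T$.
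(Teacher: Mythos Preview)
Your proposal is correct and follows essentially the same approach as the paper: induction on $d$ with base case $\mc{V}(1,t)$, reduction to $\dim Z=2$, the sequence of $(-S)$-MMP, $(-K-tS)$-MMP plus bounded complements, then a $K$-MMP to a Mori fibre space, splitting on $\dim(Z''/Z)$ and handling the $\dim(Z''/Z)=0$ case via the relatively bounded family, toroidalisation (Theorem~\ref{functorial-toroidalisation}), constancy of degree, and HMX boundedness. The only minor discrepancies are that the paper obtains the $\epsilon$-lc property from \cite[Lemma~2.48]{B-Fano} rather than ACC directly, and it first passes through the relatively bounded family $\CY_{d,\epsilon,2n}^{\text{sm}}$ (Lemmas~\ref{bir-to-rbnd-couples}--\ref{bir-to-rlbnd-generic-sm-couples}) and treats separately the easy subcase where $i(\eta_T)$ already lands in the log smooth locus before invoking toroidalisation.
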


\begin{proof}
    Decreasing $t$ slightly, we can assume that $t\in (0,1]\cap \QQ$.
    By Theorem~\ref{vertical-case-fib-dim-one}, $\mc{V}(1, t)$ holds.
    We prove the result by induction on the relative dimension $d$, hence we assume that 
    $\mc{V}(k ,t)$ holds for every $1\le k \le d-1$. 
    Taking a $\QQ$-factorial g-dlt model, we can assume that $(X, B + \mb{M})$
    is a $\QQ$-factorial g-dlt g-pair.  By Lemma~\ref{fano-type-at-dlt-model}, the Fano type property 
    of $X\to Z$ is preserved.  Let $T$ be the image of $S$ in $Z$.  
    By taking general Cartier hyperplane sections of $Z$ inductively, we can assume that $\dim Z = 2$ and $\dim T = 1$;
    see Lemma~\ref{fano-type-induction}. 

    \medskip

    \emph{Step 1}.
    In this step, we reduce the proof to an lc pair with standard coefficients.
    Since $X$ is of Fano type over $Z$, and since
    $-S$ is pseudo-effective over $Z$, we can run $\phi\colon X\dashrightarrow X'$ a $(-S)$-MMP  
    over $Z$ that ends with a good minimal model $X'\to Z$ of $-S'$, where $S' := \phi_* S$.    
    Let $g\colon X'\to Z'$ be the contraction induced by $-S'$, and let $T'$ be the birational transform
    of $T$ to $Z'$.  As $-S'$ is nef$/Z'$, $S'$ is the unique irreducible component of $\supp (g^* T')$.
    Then we can write
    \begin{equation}
        S' = a\cdot g^* T' \label{S-whole-fibre}
    \end{equation}
    for some $a\in \QQ^{>0}$.
    Moreover, running $\varphi\colon X'\dashrightarrow X''$ a $(- K_{X'} - t S')$-MMP 
    over $Z'$, we have that $-(K_{X''} + t S'')$
    is nef$/Z'$, where $S'' := \varphi_* S'$.
    By \eqref{S-whole-fibre}, $S'$ is not contracted by $\varphi$.
    Denote by $h\colon X'' \to Z'$ the induced contraction, then by \eqref{S-whole-fibre}, we also have
    \begin{equation}
        S'' = a\cdot h^* T'. \label{S-whole-fibre-1}
    \end{equation}
    By Lemma~\ref{Fano-type-pushdown}, $X''$ is also of Fano type over $Z'$.
    Thus, by \cite[Theorem 1.8]{B-Fano}, over the generic point of $T'$, there exists a monotonic $n$-complement
    $K_{X''} + B^+$ of $K_{X''} + t S''$ for some $n\in \NN$ depending only on $d, t$ such that
    \begin{itemize}
        \item $(X'', B^+)$ is lc,
        \item $n(K_{X''} + B^+) \sim 0/Z'$, and
        \item $t S'' \le B^+$.
    \end{itemize}
    In particular, the coefficients of $B^+$ belong to $\mb{T}_n$. 

    \medskip

    \emph{Step 2}.
    In this step, we reduce the proof to Mori fibre spaces.
    Set $\alpha := \coeff_{S''} B^+\in \mb{T}_n$, and set
    \[ C'' := B^+ - \frac{\alpha}{2} S''.  \]
    Then the coefficients of $C''$ belong to the set of rational numbers $\mb{T}_{2n}$.  Moreover, 
    over the generic point of $T'$, we also have
    \begin{itemize}
        \item $(X'', C'')$ is lc,
        \item $K_{X''} + C''\sim_{\QQ} 0/Z'$, and
        \item $\frac{t}{2} S'' \le C''$.
    \end{itemize}
    As $(X'', B^+)$ is lc, there is no lc centre of $(X'', C'')$ contained in $S''$ by \eqref{S-whole-fibre-1}
    and \cite[Lemma 2.5]{Kol_singularities_of_MMP}.  Let 
    \[ \pi\colon (X''', C''') \to (X'', C'') \] 
    be a $\QQ$-factorial dlt model of $(X'', C'')$.
    Then shrinking $Z'$ near the generic point of $T'$ if necessary, 
    we can assume that every stratum of $\floor{C'''}$ is horizontal$/Z'$.
    Let $F$ be a general fibre of $X'''\to Z'$.  Since
    \[ K_F + C'''|_F \sim_{\QQ}0, \]
    and since the coefficients of horizontal$/Z'$ irreducible components of $C'''$ belong to $\mb{T}_n$, 
    there exists an $\epsilon >0$
    such that $(F, 0)$ is $\epsilon$-lc by \cite[Lemma 2.48]{B-Fano}.
    Thus, up to shrinking $Z'$ near the generic point of $T'$, we can assume that
    $(X''', 0)$ is $\epsilon$-lc over $Z'\setminus T'$.  
    
    As $X'''$ is of Fano type over $Z'$ (by Lemma~\ref{fano-type-at-dlt-model}), 
    $K_{X'''}$ is non-pseudo-effective$/Z'$.
    Run $\psi\colon X'''\dashrightarrow \wt{X}$ a $K_{X'''}$-MMP over $Z'$
    that ends with a Mori fibre space $\wt{f}\colon \wt{X} \to \wt{Z}$ over $Z'$;
    in particular, $-K_{\wt{X}}$ is ample$/\wt{Z}$.  
    \[\xymatrix{
     & & X'''\ar[d]^{\pi}\ar@{-->}[r]^{\psi} & \wt{X}\ar[dd]^{\wt{f}} \\
    X\ar@{-->}[r]^{\phi}\ar[d]_f & X'\ar@{-->}[r]^{\varphi}\ar[d]_g & X''\ar[ld]^h & \\
    Z & Z'\ar[l] & & \wt{Z}\ar[ll]
    }\]
    Let $S'''$ be the birational transform of $S''$ to $X'''$.  
    Since every exceptional divisor of $\pi$ is horizontal$/Z'$, we also have that
    (up to shrinking $Z'$ near the generic point of $T'$)
    \begin{equation}
        S''' = a\cdot (h\circ \pi)^* T' \label{S-whole-fibre-2}
    \end{equation}
    for the $a\in \QQ^{>0}$ as in \eqref{S-whole-fibre-1}.  
    Thus, $S'''$ is not contracted by $\psi$.  Set $\wt{S} := \psi_* S'''$. 

    \medskip

    \emph{Step 3}.
    In this step, we show that the diagram obtained in Step 2 gives a tower of two Fano type fibrations, and then
    apply induction hypothesis to these Fano type fibrations.
    Let $\wt{T}$ be the image of $\wt{S}$ on $\wt{Z}$.  
    By \eqref{S-whole-fibre-2}, $\wt{T}$ is also of codimension one in $\wt{Z}$.
    Let $\wt{C}$ be the pushdown of $C'''$ to $\wt{X}$, then
    \[ K_{\wt{X}} + \wt{C} \sim_{\QQ} 0/Z'. \]
    Assume that $\dim (\wt{Z}/Z') \ge 1$.
    By adjunction for fibre spaces (see \cite[\S 3.4]{B-Fano}), we can write
    \[ K_{\wt{X}} + \wt{C} \sim_{\QQ} \wt{f}^* (K_{\wt{Z}} + C_{\wt{Z}} + \mb{N}_{\wt{Z}}), \]
    where $(\wt{Z}, C_{\wt{Z}} + \mb{N}) \to Z'$ is a Fano type generalised log Calabi-Yau fibration
    by Lemma~\ref{fano-type-on-base}.
    Moreover, by \cite[Lemma 3.7]{B-Fano}, we also have that
    \[ \coeff_{\wt{T}} C_{\wt{Z}} \ge \frac{t}{2}. \]
    On the other hand, by Lemma~\ref{fano-type-upper}, 
    $(\wt{X}, \wt{C}) \to \wt{Z}$ is also a Fano type (generalised) log Calabi-Yau fibration
    with $\coeff_{\wt{S}} \wt{C} \ge \frac{t}{2}$.
    Thus, by induction hypothesis $\mc{V}(k, t)$ for $1\le k\le d-1$, 
    \[ \sdeg \big( (\wt{S})^{\nor}/\wt{T} \big) \text{ and }\sdeg \big((\wt{T})^{\nor}/T'\big) \]
    are bounded from above depending only on $d, t$, hence
    \[ \ssdeg (S^{\nor}/Z) = \sdeg \big( (\wt{S})^{\nor}/T' \big)
    \le \sdeg \big( (\wt{S})^{\nor}/\wt{T} \big)\cdot \sdeg \big( (\wt{T})^{\nor}/T' \big) \]
    is also bounded from above depending only on $d,t$ by Lemma~\ref{mul-g-fib}.
    Therefore, we can assume that $\dim (\wt{Z}/Z') = 0$, 
    that is, the contraction $\wt{Z}\to Z'$ is birational.
    Shrinking $Z'$ near the generic point of $T'$ if necessary, we can assume further
    that $\wt{Z} \to Z'$ is an isomorphism of nonsingular surfaces.
    In particular, $-K_{\wt{X}}$ is ample$/Z'$.
    To simplify the notation, we identify $(\wt{Z}, \wt{T})$ and $(Z', T')$
    with $(Z, T)$ in the rest of the proof. 

    \medskip
    
    \emph{Step 4}.
    In this step, we modify the Fano fibration obtained in Step 3 to a relatively bounded family of couples.
    Recall that $(X''', 0)$ is $\epsilon$-lc over $Z\setminus T$.
    By \cite[Lemma 3.38]{km98}, $(\wt{X}, 0)$ is also $\epsilon$-lc over $Z\setminus T$.  
    Then $\wt{f}\colon (\wt{X}, \wt{C})\to Z$
    belongs to $\CY_{d, \epsilon, 2n}$; see Notation~\ref{notation-bnd-family}.
    Let $\CY_{d, \epsilon, 2n}^{\text{sm}}$ be the relatively bounded family 
    of couples associated to $\CY_{d, \epsilon, 2n}$
    as in Lemma~\ref{bir-to-rlbnd-generic-sm-couples}. 
    Without loss of generality, we can assume that 
    \[ \Phi\colon (\mf{X}, \mf{D}) \to \mf{B} \]
    is the universal family of $\CY_{d, \epsilon, 2n}^{\text{sm}}$, where $\mf{X}$ and $\mf{B}$
    are varieties, $\mf{D}$ is a reduced horizontal$/\mf{B}$ divisor on $\mf{X}$, 
    and $\Phi$ is a projective morphism with geometrically integral generic fibre. 
    
    By construction in the proof of Lemma~\ref{bir-to-rlbnd-generic-sm-couples},
    there exists a maximal open subset $\mf{B}^{\circ}\subseteq \mf{B}$
    such that $(\mf{X}, \mf{D})$ is log smooth over $\mf{B}^{\circ}$.  
    Moreover, by Lemma~\ref{bir-to-rlbnd-generic-sm-couples}, shrinking
    $Z$ near the generic point of $T$ if necessary, we can assume that 
    there is a morphism 
    \[ i\colon Z\to \mf{B}, \] 
    whose image intersects $\mf{B}^{\circ}$.
    Also, Lemma~\ref{bir-to-rlbnd-generic-sm-couples} shows that there is a commutative diagram
    as follows for some $(Y/Z, D)\in \CY_{d, \epsilon, 2n}^{\text{sm}}$
    satisfying all the properties listed in Lemma~\ref{bir-to-rlbnd-generic-sm-couples}.
    \[\xymatrix{
    \wt{X}\ar[rd]_{\wt{f}}\ar@{-->}[r]^{\wt{\phi}} & Y\ar[r]\ar[d] & (\mf{X}, \mf{D})\ar[d]^{\Phi} \\
     & Z\ar[r]^i & \mf{B}
    }\]
    Note that $\centre_Y \wt{S}$ may not be a divisor on $Y$, that is, $\wt{S}$ can be contracted by $\wt{\phi}$.
    If $\centre_Y \wt{S}$ is a divisor on $Y$, 
    then the centre of $\wt{S}$ on $\mf{X}$ is an irreducible component of the inverse image of $\Phi$ over $i(T)$. 
    In this case, $\ssdeg \big( (\wt{S})^{\nor}/Z\big)$
    is bounded from above depending only on $\CY_{d, \epsilon, 2n}^{\text{sm}}$, 
    hence depending only on $d, t$.
    Therefore, in the rest of the proof, we can assume that $\centre_Y \wt{S}$ is not a divisor on $Y$. 

    \medskip

    Denote by $\eta_T$ the generic point of $T$.  
    In Step 5, we treat the case when the image $i(\eta_T)$ is contained in $\mf{B}^{\circ}$;
    In Steps 6 to 9, we treat the case when $i(\eta_T)\not\in \mf{B}^{\circ}$.

    \medskip

    \emph{Step 5}.
    Assume that $i(\eta_T)\in \mf{B}^{\circ}$.
    Shrinking $Z$ near $\eta_{T}$ if necessary,
    we can assume that the whole image $i(Z)$ is contained in $\mf{B}^{\circ}$. 
    By construction of $\mf{B}^{\circ}$, $(Y, D)$ is log smooth over $Z$;
    in particular, $(Y, D)$ is a strict toroidal couple.  Write
    \[ K_Y + C_Y = (\wt{\phi}^{-1})^*(K_{\wt{X}} + \wt{C}), \]
    that is, pull back $K_{\wt{X}} + \wt{C}$ to a common resolution of $Y$ and $\wt{X}$, then push it down to $Y$.
    By construction in the proof of Lemma~\ref{bir-to-rlbnd-generic-sm-couples}, we can assume that $C_Y\le D$ 
    (up to shrinking $\mf{B}^{\circ}$ if necessary).  
    Thus, by \cite[Lemma 2.4]{birkar2024irrationality}, 
    \[ 0\le a(\wt{S}, Y, D) \le a(\wt{S}, Y, C_Y) = a(\wt{S}, \wt{X}, \wt{C}) <1, \]
    which implies that $a(\wt{S}, Y, D) = 0$ by \cite[Proposition 3.5]{birkar2024irrationality}.
    Then $\centre_Y \wt{S}$ is an lc centre of $(Y, D)$ that is horizontal$/T$.  
    As $(Y/Z, D)$ is log smooth and relatively bounded, the number of irreducible components 
    of a general fibre of $\centre_Y \wt{S} \to T$ is bounded from above depending only on $d,t$.
    Let $S_Y$ be a divisor over $Y$ extracting the valuation of $\wt{S}$.
    Then by \cite[Proposition 3.5]{birkar2024irrationality}, a general fibre of 
    \[ S_Y \to \centre_Y \wt{S} \] 
    is irreducible, so is a general fibre of $(S_Y)^{\nor} \to \centre_Y \wt{S}$.
    Since $S^{\nor}\to T$ factors as
    \[ S^{\nor} \dashrightarrow (\wt{S})^{\nor} \dashrightarrow (S_Y)^{\nor} \to \centre_Y \wt{S} \to T, \]
    where $S^{\nor} \dashrightarrow (\wt{S})^{\nor}$
    and $(\wt{S})^{\nor} \dashrightarrow (S_Y)^{\nor}$ are birational, we can conclude that 
    \[ \ssdeg (S^{\nor}/Z) = \sdeg (S^{\nor}/T) = \mf{n}\big((S_Y)^{\nor}/T\big)
    = \mf{n}\big((\centre_Y\wt{S})/T\big) \]
    is bounded from above depending only on $d,t$ by Lemmas~\ref{mul-g-fib} and \ref{num-irr-comp}.
    Denote by $\mf{Z}_{\mf{B}} \subset \mf{B}$ the proper closed subset $\mf{B}\setminus \mf{B}^{\circ}$.
    From now on, we assume that $i(\eta_T)$ is contained in $\mf{Z}_{\mf{B}}$. 

    \medskip

    \emph{Step 6}.  
    In this step, we modify the relatively bounded couple $(Y, D)\to Z$
    to a toroidal morphism with bounded general fibres.
    By Theorem~\ref{functorial-toroidalisation}, there is a commutative diagram
    \[\xymatrix{
	  (U_{\mf{X}'}\subset \mf{X}')\ar[r]^-{m_{\mf{X}}}\ar[d]^{\Phi'} & \mf{X}\ar[d]^{\Phi} \\
	  (U_{\mf{B}'}\subset \mf{B}')\ar[r]^-{m_{\mf{B}}} & \mf{B}
    }\]
    such that $m_{\mf{B}}$ and $m_{\mf{X}}$ are projective birational morphisms, 
    the embeddings on the left are strict toroidal embeddings, 
    $m_{\mf{X}}^{-1}(\mf{D})$ is contained in the toroidal boundary $\mf{D}'$ that is the reduction of 
    $\mf{X}'\setminus U_{\mf{X}'}$, $m_{\mf{B}}^{-1}(\mf{Z}_{\mf{B}})$ is contained in $\mf{B}'\setminus U_{\mf{B}'}$,
    and $\Phi'$ is a surjective, projective toroidal morphism of strict toroidal embeddings.
    Moreover, let $U$ be an open subset of $\mf{B}$ such that for every closed point $c\in U$
    and for every closed point $c'\in m_{\mf{B}}^{-1}(U)$,
    \begin{itemize}
        \item the fibre $\mf{X}_c$ of $\Phi$ over $c\in U$ and the fibre $\mf{X}'_{c'}$ of $\Phi'$ over $c'\in m_{\mf{B}}^{-1}(U)$ are irreducible and normal, and that
        \item the fibre $\mf{D}'_{c'}$ of $\mf{D}'$ over $c'\in m_{\mf{B}}^{-1}(U)$ is reduced of pure dimension $\dim \mf{X}'_{c'}-1$.
    \end{itemize}
    Then we can assume that the open subset $U_{\mf{B}} \subset \mf{B}$
    in Theorem~\ref{functorial-toroidalisation} (ii) is contained in $U\cap \mf{B}^{\circ}$.
    We can also assume that $i(Z)$ intersects the open subset $U_{\mf{B}}\subset \mf{B}$;
    otherwise, we make a suitable stratification of $\CY_{d, \epsilon, 2n}^{\text{sm}}$ 
    and do induction on $\dim \mf{B}$.

    Denote by $\mu$ the morphism $Y\to Z$.
    By Theorem~\ref{functorial-toroidalisation} (ii), up to shrinking $Z$
    near $\eta_{T}$, there is an induced morphism $i'\colon Z\to \mf{B}'$.
    Let $Y'$ be the normalisation of the main component of $\mf{X}'\times_{\mf{B}'} Z$ 
    (where the fibre product is formed via $i'$),
    and let $U_{Y'}\subset Y'$ be the inverse image of $U_{\mf{X}'}$ in $Y'$.
    Similarly, let $U_{Z} \subset Z$ be the inverse image of $U_{\mf{B}'}$ in $Z$ under $i'$.

    Let $m_Y$ be the induced birational morphism $Y'\to Y$; cf. notation in Theorem~\ref{functorial-toroidalisation} (ii).
    Denote by $\nu_{Y'}$ the induced morphism $Y'\to \mf{X}'$ and by 
    $\mu'$ the morphism $Y'\to Z$.
    By \cite[Lemma 3.7]{birkar2024irrationality}, we have
    \[ U_{Y'} = \nu_{Y'}^{-1}(U_{\mf{X}'}) = \nu_{Y'}^{-1}(U_{\mf{X}'}) \cap (\mu')^{-1}(U_{Z}). \]
    By Theorem~\ref{functorial-toroidalisation} (ii), 
    the induced morphism 
    \[ \mu'\colon (U_{Y'}\subset Y') \to (U_{Z}\subset Z) \]
    is also a surjective, projective toroidal morphism between strict toroidal embeddings.  
    We include the commutative diagram here for convenience.
    \[\xymatrix{
    (U_{Y'}\subset Y')\ar[rrr]^-{m_{Y}}\ar[ddd]_{\mu'}\ar[rd]^{\nu_{Y'}} & & & Y\ar[ddd]^{\mu}\ar[ld]  \\
     & (U_{\mf{X}'}\subset \mf{X}')\ar[d]_{\Phi'}\ar[r]^-{m_{\mf{X}}} & \mf{X}\ar[d]^{\Phi} &  \\
     & (U_{\mf{B}'}\subset \mf{B}')\ar[r]^-{m_{\mf{B}}} & \mf{B} &  \\
    (U_Z \subset Z)\ar[rrr]^-{m_Z}\ar[ru]^{i'} & & & Z\ar[lu]_i 
    }\]
    Recall that there is a birational map $\wt{\phi}\colon \wt{X}\dashrightarrow Y$ over $Z$
    by Lemma~\ref{bir-to-rlbnd-generic-sm-couples} such that
    $\wt{\phi}$ does not contract any horizontal$/Z$ divisors,
    and all the horizontal$/Z$ exceptional divisors of $\wt{\phi}^{-1}$ are contained in $\supp D$.
    Furthermore, by Theorem~\ref{functorial-toroidalisation} (ii),
    $m_Y^{-1}(\supp D)$ is contained in $Y'\setminus U_{Y'}$. 
    
    \medskip

    \emph{Step 7}.
    In this step, we show that the centre of $\wt{S}$ on $Y'$ is an lc centre of 
    the strict toroidal couple $(Y', D')$.
    By the proof of Theorem~\ref{functorial-toroidalisation} (ii), 
    we can assume that $T$ is equal to the toroidal boundary $Z\setminus U_Z$;
    more specifically, $Z$ is a nonsingular surface, and $T$ is a nonsingular prime divisor on $Z$.
    Moreover, by \cite[Lemma 3.7]{birkar2024irrationality},
    the support of the inverse image of $\mu'$ over $T$
    is contained in the toroidal boundary of $(U_{Y'}\subset Y')$.  
    Denote by $D'$ the reduced divisor that is the reduction of $Y'\setminus U_{Y'}$.
    Define the divisor $D_{Y'}$ by 
    \begin{equation}
        K_{Y'} + D_{Y'} = (\wt{\phi}^{-1}\circ m_Y)^* (K_{\wt{X}} + \wt{C}). \label{pullback-boundary-1}
    \end{equation}
    Similarly, denote by $D_Y$ the divisor defined by 
    \begin{equation}
        K_Y + D_Y = (\wt{\phi}^{-1})^*(K_{\wt{X}} + \wt{C}). \label{pullback-boundary-2}
    \end{equation}
    Note that $D_{Y'}$ and $D_Y$ may have components with negative coefficients.
    Since $Y$ is generically smooth over $Z$, over the generic point of $Z$, we can write
    \begin{equation}
        K_{Y'} + R = m_Y^* K_Y, \label{pullback-canonical-divisor}
    \end{equation}
    where $R\le 0$ supports in the exceptional locus of $m_Y$.
    By \eqref{pullback-boundary-1}, \eqref{pullback-boundary-2} and \eqref{pullback-canonical-divisor}, we can write
    \[ D_{Y'} = m_Y^* D_Y + R\le m_Y^* D_Y \]
    over the generic point of $Z$.  
    
    By construction of the divisor $D$ on $Y$ in Lemma~\ref{bir-to-rlbnd-generic-sm-couples}, 
    $\supp D_Y$ is contained in $\supp D$ over the generic point of $Z$.
    Moreover, since $m_Y^{-1}(\supp D)$ is contained in $\supp D'$ by construction, we can conclude that 
    $\supp (m_Y^* D_Y)$ is contained in $\supp D'$ over the generic point of $Z$.
    Then as $D_{Y'}\le 1$, we have $D_{Y'}\le D'$ over the generic point of $Z$.
    
    On the other hand, as the support of the inverse image of $\mu'$ over $T$ is contained in the toroidal boundary $D'$,
    shrinking $Z$ around $\eta_{T}$ if necessary, we can assume that
    $D_{Y'}\le D'$ over the whole $Z$.
    Then by \cite[Lemma 2.4]{birkar2024irrationality}, we have the relation of log discrepancies
    \[ 0\le a(\wt{S}, Y', D')\le a(\wt{S}, Y', D_{Y'}) = a(\wt{S}, \wt{X}, \wt{C}) < 1. \]
    Since $(Y', D')$ is a strict toroidal couple, 
    by \cite[Proposition 3.5]{birkar2024irrationality}, we have
    \[ a(\wt{S}, Y', D') = 0. \]
    That is, $\centre_{Y'} (\wt{S})$ is a horizontal$/T$ lc centre of 
    the strict toroidal couple $(Y', D')$. 

    \medskip

    \emph{Step 8}.
    In this step, we show that up to shrinking $Z$ near the generic point of $T$,
    for every closed point $z\in Z$, the number of irreducible components of the fibre
    $Y_z'$ is bounded from above depending only on $d,t$.
    Take a very ample$/\mf{B}'$ divisor $\mc{A}$ on $\mf{X}'$ so that
    $K_{\mf{X}'} + \mf{D}' + \mc{A}$ is ample$/\mf{B}'$.
    Denote by $A$ the pullback of $\mc{A}$ to $Y'$ which is an ample$/Z$ Cartier divisor on $Y'$.
    By the cone theorem (cf. \cite[Theorem 1.19]{Fujino-slc}),
    replacing $\mc{A}$ by $3(d+2)\mc{A}$ if necessary, we can assume that 
    the Cartier divisor $K_{Y'} + D' + A$ is also relatively ample over $Z$.
    Recall that every fibre of $\Phi'$ over $m_{\mf{B}}^{-1}(U_{\mf{B}})$ is irreducible and normal, 
    hence over the generic point of $Z$, the normal scheme $Y'$
    is isomorphic to $\mf{X}'\times_{\mf{B}'} Z$ (which is the fibre product via $i'\colon Z\to \mf{B}'$).  Thus,
    the relative degree of $A$ over $Z$ with respect to $A$,
    \[ \deg_{A/Z} A, \]
    is bounded from above depending only on $\CY_{d, \epsilon, 2n}^{\text{sm}}$,
    whence only on $d,t$.  
    
    Shrinking $Z$ near $\eta_{T}$ if necessary,
    we can assume that $\mu'\colon Y'\to Z$ is a flat morphism of relative dimension $d$.
    By \cite[Corollary I.3.15]{Kol-rc}, 
    \[ (\mu'\colon [Y']\to Z) \]
    is a well-defined family of algebraic cycles over $Z$,
    where $[Y']$ is the fundamental cycle of $Y'$.
    Pick an arbitrary closed point $z\in Z$.
    Let $(\mu')^{[-1]}(z)$ be the cycle theoretic fibre of $\mu'\colon Y'\to Z$;
    see \cite[Definitions I.3.8, I.3.9 and I.3.10]{Kol-rc}.  Write
    \[ (\mu')^{[-1]}(z) = n_1 [Y_1] + \cdots + n_s [Y_s], \]
    where $n_i\in \NN$ and $\Set{Y_i}_{i=1}^s$ is the set of irreducible components of the scheme-theoretic fibre $(\mu')^{-1}(z)$.
    By \cite[Proposition I.3.12]{Kol-rc}, the intersection number (see \cite[(3.1.5)]{Kol-rc})
    \[ \big((\mu')^{[-1]}(z) \cdot A^d\big) \]
    is constant for all closed points $z\in Z$.  
    Moreover, let $q$ be a general closed point of $Z$, and let $Y_q'$
    be the fibre of $Y'\to Z$ over $q$ which is a normal projective variety.  It is clear that
    \[ \big((\mu')^{[-1]}(q) \cdot A^d\big) = ([Y_q']\cdot A^d) = (A|_{Y_q'})^d = \deg_{A/Z} A, \]
    which is bounded from above depending only on $d,t$.  Thus, the intersection number
    \[ \big((\mu')^{[-1]}(z)\cdot A^d\big) = n_1 ([Y_1]\cdot A^d) + \cdots + n_s ([Y_s]\cdot A^d) \]
    is bounded from above depending only on $d,t$.  
    Recall that $A$ is an ample$/Z$ Cartier divisor on $Y'$, hence
    every $([Y_i]\cdot A^d)$ is a positive integer.  Therefore, the natural number $s$
    is bounded from above depending only on $d,t$, that is, $\mf{n}(Y_z')$ is bounded from above
    depending only on $d,t$ for every closed point $z\in Z$. 

    \medskip

    \emph{Step 9}.  
    Now we finish the proof for the case when $i(\eta_T)\not\in \mf{B}^{\circ}$.
    Let $C$ be the centre (equipped with reduced scheme structure) of $\wt{S}$ on $Y'$, which is horizontal$/T$.
    Let $L$ be a general hyperplane section of $Z$, which is a nonsingular curve.  
    Shrinking $Z, L$ if necessary, we can assume that $L$ intersects $T$
    at a single general closed point $z$ of $T$.  Denote by
    $Y_L'$ the fibre product $L\times_{Z} Y'$, which is also normal (and irreducible) by Bertini's theorem on normality;
    see \cite[Corollary 3.4.9]{normal-bertini}.  
    Note that $Y_L'$ is a Cartier divisor on $Y'$.
    By \cite[Proposition 4.5]{Kol_singularities_of_MMP}, we can write
    \[ K_{Y_L'} + D_L' \sim (K_{Y'} + D' + Y_L')|_{Y_L'}. \]
    In particular, $D_L'$ is a reduced divisor, $(Y_L', D_L')$ is an lc pair, and
    $K_{Y_L'} + D_L'$ is also a Cartier divisor on $Y_L'$.
    
    Let $C_z$ be the fibre of $C$ over $z$, then $C_z$
    is a union of lc centres of $(Y_L', D_L')$ over $z$.
    Let $V'$ be an irreducible component of $C_z$,
    and let $G'$ be the reduction of an irreducible component of the fibre $Y_z'$ containing $V'$.
    Let $G$ be the normalisation of $G'$.
    By \cite[\S 4.1]{Kol_singularities_of_MMP}, we can write
    \[ K_G + D_G \sim (K_{Y_L'} + D_L')|_G, \]
    which is a Cartier divisor.
    Note that $(G, D_G)$ is also an lc pair; cf. \cite[Theorem 1.1]{FH-adjunction}.
    
    Denote by $A_G$ the pullback of the ample, base point free, Cartier divisor $A$ to $G$.
    By the same argument in Step 8, the volume of the ample Cartier divisor $K_G + D_G + A_G$
    is bounded from above depending only on $d,t$ (cf. \cite[Proposition VI.2.10]{Kol-rc}),
    so the volume of $K_G + D_G + A_G$ takes only finitely many values.
    Changing the ample, base point free, Cartier divisor $A_G$ linearly, we can assume that $(G, D_G + A_G)$ is lc.
    Then \cite[Theorem 1.1]{HMX14} shows that $(G, D_G + A_G)$
    belongs to a bounded set of pairs (depending only on $d,t$). 
    In particular, the number of lc centres of $(G, D_G + A_G)$ is bounded from above
    depending only on $d,t$,
    so is the number of lc centres of $(G, D_G)$.

    On the other hand, take a general Cartier divisor $P\ge 0$ on $Y_L'$ containing $V'$ and avoiding other 
    lc centres of $(Y_L', D_L')$.
    For any small $\beta >0$, $(Y_L', D_L'+ \beta P)$ is not lc near $V'$, hence
    applying \cite[Theorem 1.1]{FH-adjunction}, we see that 
    $(G, D_G)$ has an lc centre $V$ mapping onto $V'$.  Thus,
    the number of irreducible components of $C_z$ contained in $G'$ is bounded from above
    depending only on $d,t$.
    As $\mf{n}(Y_z')$ is also bounded from above depending only on $d,t$ by Step 8, 
    we see that $\mf{n}(C_z)$ is bounded from above depending only on $d,t$.

    Let $S_{Y'}$ be a divisor over $Y'$ extracting the valuation of $\wt{S}$.
    Then by \cite[Proposition 3.5]{birkar2024irrationality}, a general fibre of
    $S_{Y'}\to C$ is irreducible, so is a general fibre of $(S_{Y'})^{\nor}\to C$.
    Since $S^{\nor}\to T$ has the factorisation
    \[ S^{\nor} \dashrightarrow (\wt{S})^{\nor} \dashrightarrow (S_{Y'})^{\nor} \to C \to T, \]
    where the first two maps are birational, we can conclude that (by Lemmas~\ref{mul-g-fib} and \ref{num-irr-comp})
    \[ \ssdeg (S^{\nor}/Z) = \sdeg \big( (S_{Y'})^{\nor}/T \big) = \mf{n}(C_z). \]
    Therefore, $\ssdeg( S^{\nor}/Z )$ is bounded from above depending only on $d,t$.
\end{proof}


After finishing this paper, we figured out an alternative argument for Steps 4 to 9 in the proof of 
Theorem~\ref{vertical-bnd}, which does not depend on toroidal geometry.
We sketch the main ideas of this alternative proof as following.
Note that replacing the proof involving toroidal geometry in Theorem~\ref{vertical-bnd}
by a detailed explanation of this alternative will not reduce the length of this paper.

\begin{theorem}
    Let $\epsilon, t\in \RR$ and $n,d\in \NN$.
    Let $(X, B)\to Z$ be a Fano type fibration of relative dimension $d$ satisfying that
    \begin{itemize}
        \item $-K_X$ is ample$/Z$,
        \item $n(K_X + B)\sim 0/Z$, and
        \item $X$ is $\epsilon$-lc over the generic point of $Z$.        
    \end{itemize}
    Let $S$ be a prime divisor on $X$ such that
    \begin{itemize}
        \item the coefficient of $S$ in $B$ is $\ge t$, and
        \item the image of $S$ in $Z$ is a prime divisor.
    \end{itemize}
    Assume $\mc{H}(k,t)$ for every $1\le k\le d-1$.
    Then $\ssdeg(S/Z)$ is bounded from above depending only on $n,d,\epsilon,t$.
\end{theorem}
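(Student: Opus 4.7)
\emph{Reductions and Fano boundedness.} I would first reduce to the case $\dim Z = 2$ and $\dim T = 1$ (where $T$ is the image of $S$ in $Z$) via general Cartier hyperplane sections of $Z$, using Lemma~\ref{fano-type-induction} to transport all the hypotheses, and then replace $(X, B)$ by a $\QQ$-factorial dlt model, preserving the Fano type property by Lemma~\ref{fano-type-at-dlt-model}. Because $-K_X$ is ample over $Z$ and $X$ is $\epsilon$-lc over $\eta_Z$, the general fibre of $X \to Z$ is an $\epsilon$-lc Fano variety of dimension $d$, hence bounded by \cite[Theorem 1.1]{B-BAB}. Combined with the fact that $n(K_X + B) \sim 0/Z$ forces the coefficients of $B$ to lie in $\mb{T}_n$, the datum $(X/Z, B)$ sits in the family $\CY_{d,\epsilon,n}$ of Notation~\ref{notation-bnd-family}. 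A further $(-S)$-MMP over $Z$, as in Step 1 of the proof of Theorem~\ref{vertical-bnd}, achieves that $S$ supports the whole fibre over the generic point of $T$, reducing the problem to bounding $\mf{n}(X_z)$ for a general closed point $z \in T$.

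\emph{Passage to a relatively bounded log smooth model.} By Lemma~\ref{bir-to-rlbnd-generic-sm-couples}, up to shrinking $Z$ around the generic point of $T$ (with complement of codimension $\ge 2$), I would produce a birational map $\phi\colon X \dashrightarrow Y$ over $Z$ into a relatively bounded, generically log smooth couple $(Y/Z, D)$, with $D$ horizontal over $Z$ supporting $\phi_*(B - \coeff_S B \cdot S)$ plus all horizontal$/Z$ exceptional divisors of $\phi^{-1}$. The centre $C := \centre_Y S$ has log discrepancy $\le 1 - t < 1$ by Lemma~\ref{CY-log-discre-no-change}, so I may extract $S$ as a divisor on a further birational model $\pi\colon Y' \to Y$ via \cite[Lemma 4.6]{B-Zhang}. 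To keep $(Y', D')$ inside a bounded family, I would invoke \cite[Theorem 1.1]{HMX14}: the volume of $K_{Y'} + D' + A_{Y'}$ stays bounded for $A_{Y'}$ the pullback of a suitably chosen relatively ample divisor inherited from the bounded structure on $Y$.

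\emph{Bounding via $\mc{H}(d-1,1/n)$.} Fix a general closed point $z \in T$ and take a general hyperplane section $L$ of $Z$ through $z$, a nonsingular curve meeting $T$ transversally. Restricting via Lemma~\ref{fano-type-induction} gives a Fano type fibration $(X_L, B_L) \to L$ of relative dimension $d$ over a curve, which by \cite[Theorem 1.8]{B-Fano} admits an $n$-complement $K_{X_L} + B_L^+$ with a horizontal$/L$ component $H_L$ of coefficient $\ge 1/n$. For each irreducible component $G'$ of the fibre $(Y'_L)_z$ with normalisation $G$, adjunction applied to $K_{Y'_L} + D'_L + (Y'_L)_z$ restricted to $G$ gives an lc pair $(G, D_G)$ of bounded volume with respect to an ample Cartier divisor inherited from the bounded structure, hence bounded by \cite[Theorem 1.1]{HMX14}; this bounds the number of lc centres of $(G, D_G)$ mapping onto a component of $S_{Y'} \cap (Y'_L)_z$. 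Applying $\mc{H}(d-1, 1/n)$ to the log Calabi-Yau fibration $(H_L, B_{H_L}) \to L$ arising from generalised adjunction to $H_L$ bounds $\mf{n}(H_L/L)$ in terms of $n,d,t$; combined with the fact that $H_L$ meets every component of $(X_L)_z$ (achieved by running an auxiliary MMP making the complementary direction relatively ample over $L$), this controls $\mf{n}((X_L)_z) = \ssdeg(S/Z)$ via Lemma~\ref{mul-g-fib} and Lemma~\ref{num-irr-comp}.

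\emph{Main obstacle.} The principal difficulty is replacing the toroidal structure of Theorem~\ref{functorial-toroidalisation} by arguments involving only log resolutions and \cite[Theorem 1.1]{HMX14}. In the toroidal approach, lc centres of a strict toroidal couple coincide with strata of a simple-normal-crossing divisor, and their combinatorics over a codimension-one point of the base is explicitly controlled. The toroidal-free alternative must instead ensure that extracting the valuation of $S$ on log resolutions does not inflate the volume of $K_{Y'} + D' + A_{Y'}$, and must deploy the hypothesis $\mc{H}(d-1, 1/n)$ via a horizontal component of an $n$-complement to bound the number of fibre components over $z$. Weaving these threads together — preserving the bounded structure through the extraction of $S$, ensuring that $H_L$ intersects every irreducible component of $(X_L)_z$, and invoking the right instance of the inductive hypothesis — is the main technical step of the argument.
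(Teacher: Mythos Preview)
Your proposal has a genuine gap at its core, and the overall architecture differs from the paper's in a way that causes the argument to fail.

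The paper's key construction (Step 1 of its proof) is to take a log resolution $W\to X$ of $(X,B+A)$ for a very ample$/Z$ prime divisor $A$, set
\[
\Delta_W := B_h^{\sim} + \ex(\pi) + \supp h^*T,
\]
and run an MMP on $K_W+\Delta_W+\tfrac{1}{2}A^{\sim}$ over $Z$ down to an ample model $(Y,\Delta)$. The crucial feature is that $\supp h^*T$ is \emph{deliberately} thrown in with coefficient~$1$, so that on $Y$ one has $\red(g^*T)\subseteq\lfloor\Delta\rfloor$ while $A_Y$ remains ample$/Z$ with bounded relative volume and $K_Y+\Delta+\tfrac12 A_Y$ is ample$/Z$. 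This is what allows the fibre $Y_z$ to be cut into finitely many stable log Calabi--Yau pieces $(R_i^{\nor},\Delta_{R_i^{\nor}}),A_{R_i^{\nor}}$ whose volumes, by \cite[Theorem 1.3]{HMX14-ACC}, lie in a DCC set bounded away from~$0$; the sum of these volumes equals $\vol(A)_{/Z}$, bounding the number of fibre components. Your route through Lemma~\ref{bir-to-rlbnd-generic-sm-couples} produces a couple $(Y,D)$ where $D$ is purely \emph{horizontal}$/Z$: the fibre over $T$ is nowhere in the boundary, so none of the stable--pair / volume machinery applies. Your attempt to repair this by ``extracting $S$'' on $Y'\to Y$ and hoping \cite[Theorem 1.1]{HMX14} keeps $(Y',D')$ bounded is the step you yourself flag as the main obstacle, and it does not go through as stated: extracting a divisor with log discrepancy in $(0,1)$ does not in general keep the volume of $K+{\rm boundary}+A$ bounded.

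Your use of the inductive hypothesis is also misplaced. The paper does not bound fibre components by producing a horizontal $n$-complement section $H_L$ meeting every component of $(X_L)_z$. Instead, having already bounded the number of fibre components $P$ of $g^*T$ (Step~2 via the volume/DCC argument above), it picks one component $P$ containing $\centre_Y S$, extracts $S$ on $Y''\to Y$, and observes that $J'':=P''\cap S''$ is a divisor on $P$ with $a(J'',P,\Delta_P)\le 1-t$. Since $(P,\Delta_P)\to T''$ (the Stein factorisation of $P\to T$) is a log Calabi--Yau fibration of relative dimension $d-1$, it is \emph{here} that $\mc{H}(d-1,t)$ is invoked, to bound $\sdeg\big((J'')^{\nor}/T''\big)$. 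A final normalisation--degree argument (Step~4) then passes from $J''$ back to $S^{\nor}$. Your proposal to apply $\mc{H}(d-1,1/n)$ to an adjoint fibration on $H_L$ is a different geometry; as written it is unclear which divisor on $H_L$ is playing the role of $S$, and the claim that $H_L$ meets every component of $(X_L)_z$ after ``an auxiliary MMP'' is not substantiated and would in any case interfere with the bounded--family structure you need. Finally, running the $(-S)$-MMP at the outset destroys the ampleness of $-K_X$ over $Z$, so the family $\CY_{d,\epsilon,n}$ (and hence Lemma~\ref{bir-to-rlbnd-generic-sm-couples}) no longer applies to the resulting model.
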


\begin{proof}
    Denote by $T$ the image of $S$ in $Z$.
    By Bertini's theorem, and by shrinking $Z$ around the generic point of $T$, 
    we can assume that $Z$ is a nonsingular surface and that 
    $T$ is a nonsingular curve.  

    \medskip

    \emph{Step 1}.
    Let $A$ be a prime divisor on $X$ that is very ample over the generic point of $Z$.
    Let $\pi\colon W\to X$ be a log resolution of $(X, B + A)$, 
    and let $h\colon W\to Z$ be the induced morphism.
    Let $B_h$ be the horizontal$/Z$ part of $B$.  
    Let $B_h^{\sim}$ (respectively, $A^{\sim}$) be the birational transform of $B_h$
    (respectively, of $A$) to $W$.
    Denote by $\ex(\pi)$ the reduced exceptional divisor of $\pi$.  Let
    \[ \Delta_W := B_h^{\sim} + \ex(\pi) + \supp h^*T. \]
    By \cite{BCHM}, we can run $\phi\colon W\dashrightarrow Y'$
    an MMP$/Z$ on the divisor $(K_W + \Delta_W + (1/2)A^{\sim})$ that ends with a good minimal model
    $Y'\to Z$ for $(K_W + \Delta_W + (1/2)A^{\sim})$.  Let
    \[ \Delta_{Y'} := \phi_*\Delta_W \,\text{ and }\, A' := \phi_*A^{\sim}. \]
    As $K_{Y'} + \Delta_{Y'} + (1/2)A'$ is nef and big over $Z$, 
    there is a birational contraction $\varphi\colon Y'\to Y$ over $Z$ such that 
    $K_{Y'} + \Delta_{Y'} + (1/2)A'$ is pullback of an ample$/Z$ divisor on $Y$.  Let
    \[ \Delta := \varphi_*\Delta_{Y'} \,\text{ and }\, A_Y := \varphi_*A'. \]
    Then $(Y, \Delta + (1/2)A_Y)$ is an lc pair such that $K_Y + \Delta + (1/2)A_Y$ is ample$/Z$, and
    the pairs $(X, B + (1/2)A)$ and $(Y, \Delta + (1/2)A_Y)$ are isomorphic over the generic point of $Z$.
    Shrinking $Z$ near the generic point of $T$, we can assume that $K_Y+\Delta\sim_{\QQ}0/Z$.
    Hence $A_Y$ is an ample$/Z$ prime divisor on $Y$.  Moreover, 
    we can deduce that $Y$ is of Fano type over $Z$.
    By construction, the coefficients of $\Delta$ belong to the finite set $\mb T_n$.
    Then by boundedness of complements \cite[Theorem 1.8]{B-Fano}, up to shrinking $Z$ near the generic point
    of $T$, there is a bounded $\ell\in \NN$ such that $\ell (K_Y + \Delta)\sim 0/Z$.    

    \medskip

    \emph{Step 2}.
    Denote by $g$ the contraction $Y\to Z$.  By construction, $\red (g^* T) \subseteq \floor{\Delta_Y}$.
    We claim that for every horizontal$/T$ irreducible component $P$ of $g^*T$, $\sdeg(P^{\nor}/T)$
    is bounded from above, where $P^{\nor}$ is the normalisation of $P$.
    Shrinking $Z$ near the generic point of $T$, we can assume that $g\colon Y\to Z$ is flat, hence
    for every $z\in Z$, $\vol(A_Y|_{Y_z})$ is equal to $\vol(A|_{X_u})$,
    where $u\in Z$ is a general closed point.  Thus, $\vol(A_Y|_{Y_z})$ belongs to a finite subset of natural numbers.
    Let $H\subset Z$ be a general hyperplane section and $G := g^* H$.
    By Bertini's Theorem, $G$ is a normal variety.  Let
    \[ \Delta_G := \Delta|_G \,\text{ and }\, A_G := A_Y|_G. \]
    As $(Y, \Delta_Y + (1/2)A_Y)$ is lc, 
    $\Set{(G, \Delta_G), A_G}$ is an lc stable Calabi-Yau fibration over $H$; cf. \cite{B-moduli}.
    By construction, $\ell (K_G + \Delta_G)\sim 0/H$.
    
    Let $z$ be one of the closed points $T\cap H$.  Taking $H$ generally,
    we can assume that $z$ is a general closed point of $T$.
    Denote by $R$ the reduction of the fibre $G_z = Y_z$.
    Then $R$ is contained in $\floor{\Delta_G}$.  
    Denote by $\nu\colon R^{\nor}\to R$ the normalisation of $R$.  Write
    \[ K_{R^{\nor}} + \Delta_{R^{\nor}} = \nu^*(K_G + \Delta_G), \]
    where the coefficients of $\Delta_{R^{\nor}}$ belong to 
    a finite set of rational numbers depending only on $\ell$.  Let
    \[ A_{R^{\nor}} := \nu^* A_G. \]
    By adjunction, the coefficients of irreducible components of $A_{R^{\nor}}$
    in $\Delta_{R^{\nor}}$ also belong to a finite set of rational numbers depending only on $\ell$.
    
    Let $R_1,\dots, R_r$ be irreducible components of $R$,
    and let $R_i^{\nor}$ be the normalisation of $R_i$.
    Let $\Delta_{R_i^{\nor}}$ (respectively, $A_{R_i^{\nor}}$) be the restriction of $\Delta_{R^{\nor}}$
    (respectively, of $A_{R^{\nor}}$) to $R_i^{\nor}$.
    Then every $(R_i^{\nor}, \Delta_{R_i^{\nor}}), A_{R_i^{\nor}}$ 
    is a stable log Calabi-Yau pair.
    By \cite[Theorem 1.3]{HMX14-ACC}, $\vol(A_{R_i^{\nor}})$ belongs to a DCC set of rational numbers, 
    and there exists a $\delta>0$ such that
    \[ \vol(A_{R_i^{\nor}}) \ge \delta. \]
    Write $g^*z = \sum_{i=1}^r m_i R_i$, where $m_i\in \NN$.  Then we get
    \[ \vol(A)_{/Z} = \vol(A_Y|_{Y_z}) = \sum_{i=1}^r m_i\vol(A_{R_i}) = \sum_{i=1}^r m_i\vol(A_{R_i^{\nor}}). \]
    Hence $r$ is bounded from above depending only on $\ell, d$.
    Then $\sdeg (P^{\nor}/T)$ is bounded from above depending only on $n,d$.

    \medskip

    \emph{Step 3}.
    We can assume that $\centre_Y S$ is not a divisor on $Y$.
    Let $P$ be an irreducible component of $g^*T$ that contains $\centre_Y S$.
    Let $Y''\to Y$ be a birational contraction that extracts only the geometric valuation of $S$, and write
    $K_{Y''} + \Delta_{Y''}$ as the pullback of $K_Y + \Delta_Y$.
    Denote by $P'', S''$ the centres of $P, S$ on $Y''$.
    It is easy to see that $P''\cap S''$ has a horizontal$/T$ irreducible component $J''$ such that 
    $J''$ is of codimension one in $P''$ and $S''$. 
    Since $(Y, \Delta_Y)$ is dlt, $P$ is normal.  Write
    \[ K_P + \Delta_P = (K_Y + \Delta_Y)|_P. \]
    Then $J''$ is a non-canonical place of $(P, \Delta_P)$ such that $a(J'', P, \Delta_P)\le 1-t$.
    Let $P\to T''\to T$ be the Stein factorisation of $P\to T$.
    By Step 2, $\deg(T''/T) = \sdeg(P/T)\le \sdeg(P^{\nor}/T)$ is bounded from above depending only on $n,d$.
    Moreover, $(P, \Delta_P)\to T''$ is also a log Calabi-Yau fibration.
    By assumption $\mc{H}(d-1, t)$, we see that $\sdeg((J'')^{\nor}/T'')$ is bounded from above depending
    only on $d, t$.  By Lemma~\ref{mul-g-fib},
    we can conclude that $\sdeg ((J'')^{\nor}/T)$ is bounded from above depending only on $n,d,t$.

    \medskip

    \emph{Step 4}.
    Let $\pi\colon (S'')^{\nor}\to S''$ be the normalisation of $S''$.  
    As $\coeff_{S''}\Delta_{Y''}\ge t$, it can be shown that
    there is a closed subset $U''\subset S''$ satisfying $\codim (U'', S'')\ge 2$
    such that for every closed point $s\in S''\setminus U''$, 
    the cardinality of $\pi^{-1}(s)$ is bounded from above depending only on $t$.  
    Let $I\subset (S'')^{\nor}$ be a prime divisor mapped onto $J''$ under $\pi$.
    Then $\deg (I/J'')$ is bounded from above depending only on $t$.  
    Denote by $I^{\nor}\to I$ the normalisation of $I$.  
    By Lemma~\ref{mul-g-fib},
    $\sdeg (I^{\nor}/T)$ is bounded from above depending only on $n,d,t$.
    By the same argument of \cite[Lemma 3.3]{B-moduli}, 
    we can conclude that $\sdeg (S^{\nor}/T) = \sdeg ((S'')^{\nor}/T)$ 
    is bounded from above depending only on $n, d, \epsilon, t$.
\end{proof}


\section{Boundedness of S-degree of divisors on log Calabi-Yau fibrations}\label{pf-main-results}

We prove our main result Theorem~\ref{dim-d-t} in this section. 

\subsection{Reduction to klt fibrations}\label{red-to-klt-S-d-t}

In this subsection, we reduce the proof of $\mc{H}(d,t)$
to the study of generalised log Calabi-Yau fibrations $(X/Z, B + \mb{M})$,
where $(X, B + \mb{M})$ is a g-klt g-pair.
This enables us to reduced the proof of $\mc{H}(d,t)$ further
to the consideration of Fano type fibrations; see \S\ref{main-proof-S-d-t}.

To make our statements more concise, we introduce several notation as follows.

\begin{definitionnotation}
	Let $d\in \NN$ and $t\in \RR^{>0}$.  
    \begin{itemize}
        \item [(1)] Denote by $\CY(d, t)$ 
	                the set of data
                	\[ \Set{ (X/Z, B + \mb{M}), S }, \]
	                where
	                \begin{itemize}
	                	\item [(a)] $X\to Z$ is a contraction with $\dim (X/Z) = d$,
	                	\item [(b)] $(X, B + \mb{M})\to Z$ is a generalised log Calabi-Yau fibration, and
	                	\item [(c)] $S$ is a horizontal$/Z$ irreducible component of $B$ with $\coeff_S B \ge t$.
	                \end{itemize}
        \item [(2)] Let $\CY_{\cur} (d, t)$ be the subset of $\CY (d, t)$ such that for every $\Set{(X/Z, B + \mb{M}), S}$ in $\CY_{\cur}(d, t)$, the base $Z$ is a nonsingular curve.
        \item [(3)] Let $\Spair (d, t)$ be the subset of $\CY_{\cur}(d, t)$
	                such that for every $\Set{(X/Z, B + \mb{M}), S}$ in $\Spair (d, t)$, the g-pair $(X, B + \mb{M})$ is a \emph{$\QQ$-factorial} g-klt g-pair.
    \end{itemize}    
    In particular, we have the inclusions 
    \[ \Spair (d,t) \subset \CY_{\cur} (d,t) \subset \CY (d,t). \]
\end{definitionnotation}


First we show that to prove $\mc{H}(d, t)$, we can always assume that 
the base variety of every fibration in $\CY (d, t)$ is a smooth curve.

\begin{lemma}\label{red-to-base-curve}
    $\mc{H}(d, t)$ holds for $\CY (d, t)$ if and only if $\mc{H}(d, t)$
    holds for $\CY_{\cur}(d, t)$.
\end{lemma}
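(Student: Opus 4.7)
\textbf{Proof plan for Lemma~\ref{red-to-base-curve}.} The ``only if'' direction is immediate from the inclusion $\CY_{\cur}(d,t) \subseteq \CY(d,t)$, so I will focus on the ``if'' direction. Given an arbitrary $\Set{(X/Z, B + \mb{M}), S} \in \CY(d,t)$, by Lemma~\ref{num-irr-comp} the quantity to bound is
\[ \sdeg(S^{\nor}/Z) = \mf{n}(S/Z), \]
the number of irreducible components of a general fibre of $S \to Z$. Since this depends only on behaviour over general closed points of $Z$, the natural strategy is to restrict the whole datum to a sufficiently general complete intersection curve $C \subseteq Z$ and to invoke $\mc{H}(d,t)$ for $\CY_{\cur}(d,t)$ on the restricted fibration.

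Concretely, when $\dim Z \ge 2$ I would pick a general very ample Cartier hyperplane section $L \subset Z$ and form $X_L := L \times_Z X$, $S_L := L \times_Z S$. By Bertini's theorem on normality \cite[Corollary 3.4.9]{normal-bertini} and Bertini's irreducibility theorem \cite[Th\'eor\`eme 6.10]{jou-bertini}, the schemes $L$, $X_L$, $S_L$ are all normal and irreducible; for $S_L$ we use that $S$ is horizontal$/Z$ and therefore dominates $Z$. Then the argument of Lemma~\ref{fano-type-induction} applies verbatim, \emph{except for the Fano-type clause}: via generalised adjunction \cite[Definition 4.7]{B-Zhang} one obtains a generalised log Calabi-Yau fibration
\[ (X_L, B_L + \mb{N}) \to L \]
of relative dimension $d$ in which $S_L$ is a horizontal$/L$ irreducible component of $B_L$ with $\coeff_{S_L} B_L \ge t$. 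Iterating this construction $\dim Z - 1$ times yields a nonsingular curve $C$ and a datum $\Set{(X_C/C, B_C + \mb{N}), S_C} \in \CY_{\cur}(d,t)$.

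It then remains to compare Stein degrees. Since the hyperplane sections are chosen generically, a general closed point $c \in C$ may be arranged to lie in any fixed dense open subset of $Z$; in particular $(S_C)_c = S_c$ has exactly $\mf{n}(S/Z)$ irreducible components, so $\mf{n}(S_C/C) = \mf{n}(S/Z)$. Applying the hypothesis $\mc{H}(d,t)$ for $\CY_{\cur}(d,t)$ to the curve-base datum $\Set{(X_C/C, B_C + \mb{N}), S_C}$ therefore bounds $\sdeg(S^{\nor}/Z) = \sdeg((S_C)^{\nor}/C)$ in terms of $d$ and $t$ alone. The only subtlety is verifying that the adjunction step of Lemma~\ref{fano-type-induction} goes through without the Fano-type hypothesis; inspection of that proof shows Fano type is used only to conclude that $X_L$ is of Fano type over $L$, a conclusion we do not need here, so this is a routine matter rather than a genuine obstacle.
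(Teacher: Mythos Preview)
Your proposal is correct and follows exactly the approach the paper takes: the paper's proof simply says ``the result follows exactly from the same proof of Lemma~\ref{fano-type-induction}'', and you have spelled out that proof (hyperplane sections, Bertini, generalised adjunction) while correctly noting that the Fano-type clause is irrelevant here. One tiny slip: you assert $S_L$ is normal, but $S$ need not be normal, so Bertini for normality does not apply to it; what you actually need (and what the paper states in Lemma~\ref{fano-type-induction}) is only that $S_L$ is reduced and irreducible, which suffices for $S_L$ to be a prime divisor in $B_L$.
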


\begin{proof}
    Pick an arbitrary $\Set{ (X/Z, B + \mb{M}), S }$ in $\CY (d, t)$.
    Assume $\dim Z\ge 2$, then the result follows exactly from
    the same proof of Lemma~\ref{fano-type-induction}. 
\end{proof}


The following result shows that for Theorem~\ref{dim-d-t},
it suffices to prove $\mc{H}(d, t)$ for $\Spair (d, t)$.

\begin{lemma}\label{red-to-Q-fac-klt}
    Assume that $\mc{H} (k, t)$ holds for $\CY(k, t)$ for every $1\le k \le d-1$.
    Then $\mc{H}(d, t)$ holds for $\CY (d, t)$ if and only if $\mc{H}(d, t)$ holds
    for $\Spair (d, t)$.
\end{lemma}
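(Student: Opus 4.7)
The implication $\mc{H}(d,t)$ for $\CY(d,t)$ $\Longrightarrow$ $\mc{H}(d,t)$ for $\Spair(d,t)$ is immediate from the inclusion $\Spair(d,t) \subseteq \CY(d,t)$. For the converse, I take an arbitrary element $\{(X/Z, B + \mb{M}), S\} \in \CY(d,t)$ and aim to bound $\sdeg(S^{\nor}/Z)$ in terms of $d,t$. By Lemma~\ref{red-to-base-curve}, I may assume $Z$ is a nonsingular curve. Applying \cite[Lemma 4.5]{B-Zhang}, I pass to a $\QQ$-factorial g-dlt model $\pi \colon Y \to X$, replacing $S$ by its birational transform $S_Y$. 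Lemma~\ref{CY-log-discre-no-change} preserves the log Calabi-Yau relation together with all log discrepancies, Lemma~\ref{num-irr-comp} yields $\sdeg(S^{\nor}/Z) = \sdeg(S_Y^{\nor}/Z)$, and $\coeff_{S_Y} B_Y = \coeff_S B \ge t$. Hence I may assume $(X, B + \mb{M})$ itself is $\QQ$-factorial g-dlt over the smooth curve $Z$.

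The main step is to modify $(X, B + \mb{M})$ into a $\QQ$-factorial g-klt g-pair preserving the log Calabi-Yau structure over $Z$ and the lower bound on $\coeff_S$. Let $\Gamma := \lfloor B \rfloor - \epsilon_S S$, where $\epsilon_S = \coeff_S \lfloor B \rfloor \in \{0,1\}$, so that $\Gamma$ collects the coefficient-one components of $B$ other than $S$. For a small rational $\delta > 0$, I set
\[ B^\flat := B - \delta \Gamma, \qquad \mb{M}^\flat := \mb{M} + \delta \overline{\Gamma}, \]
where $\overline{\Gamma}$ is a $\bdiv$-$\RR$-Cartier $\bdiv$-divisor over $X$ descending to a nef $\RR$-Cartier representative of $\Gamma$ on some birational model of $X$ on which $\mb{M}$ also descends to nef. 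Then $K_X + B^\flat + \mb{M}^\flat_X \sim_{\RR} 0/Z$, the pair $(X, B^\flat + \mb{M}^\flat)$ is g-klt away from $S$, and $\coeff_S B^\flat \ge t$. If $\coeff_S B < 1$ the pair is fully g-klt, so $\{(X/Z, B^\flat + \mb{M}^\flat), S\}$ lies in $\Spair(d,t)$ and the assumed bound applies. The remaining case $\coeff_S B = 1$ (in which $S$ is an lc centre not amenable to perturbation) is treated separately: on the $\QQ$-factorial g-dlt model $S_Y$ is normal, whence $\sdeg(S_Y^{\nor}/Z) = \sdeg(S_Y/Z)$ by Lemma~\ref{num-irr-comp}, and Theorem~\ref{B-moduli-bnd-S-degree} together with generalised adjunction to $S_Y$ (using the induction hypothesis $\mc{H}(k,t)$ for $1 \le k \le d-1$ on $(S_Y, B_{S_Y} + \mb{M}_{S_Y}) \to Z$) furnishes the bound.

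The principal obstacle is constructing $\overline{\Gamma}$ so that $\mb{M}^\flat$ genuinely descends to a nef $\RR$-Cartier divisor on some birational model; this is not automatic, as the pullback of $\Gamma$ to a high enough resolution need not be nef. To overcome it, I will take a common log resolution $W \to X$ on which $\mb{M}$ descends to nef, perform a Zariski-type decomposition of the total transform of $\Gamma$, and invoke the induction hypothesis $\mc{H}(k,t)$ for $1 \le k \le d-1$ — applied via generalised adjunction to each horizontal$/Z$ component of $\lfloor B \rfloor \setminus S$ — to control the Stein degrees and thus the structure of the lc centres appearing in the decomposition. Once the nefness is secured, the datum $\{(X/Z, B^\flat + \mb{M}^\flat), S\}$ lies in $\Spair(d,t)$, and the assumed bound for $\Spair(d,t)$ yields the desired bound on $\sdeg(S^{\nor}/Z)$.
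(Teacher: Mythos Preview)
Your argument has a genuine gap at exactly the point you flag as the ``principal obstacle'': the construction of $\overline{\Gamma}$. An effective divisor $\Gamma$ on $X$ does not in general admit any birational model on which its pullback is nef; the nef part of a Zariski-type decomposition of $\Gamma$ is strictly smaller than $\Gamma$, so if you replace $\delta\Gamma$ in the moduli part by a nef piece $\delta P$ you lose the log Calabi--Yau relation $K_X + B^\flat + \mb{M}^\flat_X \sim_{\RR} 0/Z$. Your proposal to ``invoke the induction hypothesis $\mc{H}(k,t)$ applied via generalised adjunction to each horizontal$/Z$ component of $\lfloor B\rfloor \setminus S$'' does not help here: $\mc{H}(k,t)$ bounds Stein degrees of components of the \emph{boundary} on such a component, not the Stein degree of the component itself, and in any case gives no information about nefness of $\Gamma$ on any model. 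The separate treatment of the case $\coeff_S B = 1$ is also problematic: Theorem~\ref{B-moduli-bnd-S-degree} is stated for ordinary lc pairs, not generalised pairs, and even on a g-dlt model the generalised adjunction to $S_Y$ only produces a log Calabi--Yau fibration $(S_Y, B_{S_Y} + \mb{M}_{S_Y})\to Z$ of relative dimension $d-1$ to which $\mc{H}(d-1,t)$ again bounds Stein degrees of boundary components, not $\sdeg(S_Y/Z)$.

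The paper takes a different route that avoids this obstruction entirely. After passing to a $\QQ$-factorial g-dlt model, it runs a $(-S)$-MMP over $Z$ (equivalently an MMP on $K_X + (B-\alpha S) + \mb{M}_X$, where $\alpha = \coeff_S B$) ending in a Mori fibre space $X'\to Z'$. If $\dim(X'/Z') \le d-1$ the induction hypothesis applies directly; otherwise $Z'\to Z$ is an isomorphism and $S'$ is ample$/Z$. Ampleness of $S'$ is the key: one writes $B' = (1-\epsilon)(B'-\alpha S') + \big(\epsilon(B'-\alpha S') + \tfrac{\alpha}{2}S'\big) + \tfrac{\alpha}{2}S'$, and for $0<\epsilon\ll 1$ the middle piece is ample$/Z$ and can be replaced by a general $H'\sim_{\RR}$ it. The pair $(X', (1-\epsilon)(B'-\alpha S') + H' + \tfrac{\alpha}{2}S' + \mb{M})$ is then $\QQ$-factorial g-klt with $\coeff_{S'} \ge t/2$, landing in $\Spair(d,t/2)$. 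The point is that the MMP manufactures ampleness, which lets one perturb into the klt range without touching the nef part $\mb{M}$ at all.
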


\begin{proof} 
    By Lemma~\ref{red-to-base-curve}, it suffices to consider $\mc{H}(d, t)$
    for $\CY_{\cur}(d,t)$.  Assuming $\mc{H}(d, t)$ holds for $\Spair (d, t)$,
    we show that $\mc{H}(d, t)$ holds for $\CY_{\cur}(d, t)$. 

    \medskip
    
    \emph{Step 1}.
    Let $\Set{(X/Z, B+\mb{M}), S}$ be an arbitrary member in $\CY_{\cur} (d, t)$.  
    Set $\alpha := \coeff_S B$.
    Taking a $\QQ$-factorial g-dlt model of $(X, B + \mb{M})$, we can assume that $(X, B + \mb{M})$
    is a $\QQ$-factorial g-dlt g-pair.  In particular, $X$ has klt singularities.
    By \cite[Lemma 4.4]{B-Zhang}, as $-S$ is non-pseudo-effective$/Z$, 
    we can run $\phi\colon X\dashrightarrow X'/Z$ an MMP over $Z$
    on the divisor
    \[ K_X + (B - \alpha S) + \mb{M}_X \sim_{\RR} -\alpha S/Z, \]
    which ends with a Mori fibre space $g\colon X' \to Z'$ of $S$, so $S':= \phi_*S$ is ample$/Z'$.
    By Lemma~\ref{CY-log-discre-no-change}, 
    the coefficient of $S'$ in $B' := \phi_* B$ is also equal to $\alpha$.

    If $\dim (X'/Z')\le d-1$, then $\sdeg \big((S')^{\nor}/Z'\big)$ is bounded from above depending only on $d, t$
    by assumption, hence by Lemmas~\ref{mul-g-fib} and \ref{num-irr-comp}, 
    \[ \sdeg (S^{\nor}/Z) = \sdeg \big( (S')^{\nor}/Z \big) \le \sdeg \big( (S')^{\nor}/Z' \big). \]
    Thus, we can assume that $Z'\to Z$ is an isomorphism of normal curves. 
    Then $S'$ is ample$/Z$. 

    \medskip

    \emph{Step 2}.
    Since $(X, (B - \alpha S) + \mb{M})$ is a $\QQ$-factorial g-dlt g-pair, 
    \[ (X', (B' - \alpha S') + \mb{M}) \]
    is also a $\QQ$-factorial g-dlt g-pair; see \cite[\S2.13 (2)]{B-Fano}.  
    Pick $0< \epsilon < 1$ and write
    \[ B' = (1-\epsilon)(B' - \alpha S') + \bigg(\epsilon (B' - \alpha S') + \frac{\alpha}{2} S'\bigg) + \frac{\alpha}{2} S'.  \]
    For sufficiently small $0< \epsilon \ll 1$, the divisor 
    $\epsilon (B' - \alpha S') + (\alpha/2)S'$
    is also ample$/Z$.  Let $H'$ be a general effective ample$/Z$ divisor such that
    \begin{equation}
        \epsilon (B' - \alpha S') + \frac{\alpha}{2}S' \sim_{\RR} H'/Z. \label{linear-H}
    \end{equation}
    Set
    \[ B_{\epsilon}' := (1-\epsilon) (B' - \alpha S'). \]
    As $(X', (B' - \alpha S') + \mb{M})$ is $\QQ$-factorial and g-dlt, $(X', B_{\epsilon}' + \mb{M})$
    is a $\QQ$-factorial g-klt g-pair.  On the other hand, since
    $(X', B_{\epsilon}' + \epsilon (B'-\alpha S') + \alpha S' + \mb{M})$ is g-lc 
    by Lemma~\ref{CY-log-discre-no-change}, the g-pair $(X', B_{\epsilon}' + \alpha S' + \mb{M})$
    is also g-lc, hence
    \[ (X', B_{\epsilon}' + \frac{\alpha}{2} S' + \mb{M}) \]
    is a $\QQ$-factorial g-klt g-pair; cf. \cite[Corollary 2.35 (5)]{km98}.
    Thus, as $H'$ is general, 
    \[ (X', C' + \mb{M}) := (X', B_{\epsilon}' + H' + \frac{\alpha}{2} S' + \mb{M}) \]
    is also a $\QQ$-factorial g-klt g-pair.  Moreover, by~\eqref{linear-H}, 
    $(X', C' + \mb{M}) \to Z$ is also a generalised log Calabi-Yau fibration.
    Therefore, up to replacing $t$ by $\frac{t}{2}$, it suffices to 
    consider the data $\Set{(X'/Z, C' + \mb{M}), S'}$,
    which belongs to $\Spair (d, t)$.
\end{proof}


\subsection{Boundedness of S-degree on Fano type fibrations}\label{bnd-fano-type-S-d-t}

We show in this subsection that Theorem~\ref{dim-d-t} holds for horizontal divisors
on Fano type fibrations.

\begin{theorem}\label{Fano-type-lCY-fib}
    Let $d\in \NN$ and let $t\in \RR^{>0}$.  Let 
    \[ f\colon (X, B + \mb{M})\to Z \] 
    be a Fano type generalised log Calabi-Yau fibration 
	of relative dimension $d$.  Let $S$ be a horizontal$/Z$ irreducible component of $B$ 
	whose coefficient in $B$ is $\ge t$.
	Then $\sdeg (S^{\nor}/Z)$ is bounded from above depending only on $d, t$,
    where $S^{\nor}$ is the normalisation of $S$.
\end{theorem}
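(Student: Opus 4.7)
The plan is to proceed by induction on the relative dimension $d$, combining the inductive hypothesis $\mc{H}(k,t)$ for $1\le k\le d-1$ with the already established $\mc{V}(k,t)$ for all $1\le k\le d$ (Theorem~\ref{vertical-bnd}); the base case $\mc{H}(1,t)$ is Lemma~\ref{base-case-r-dim=1}. By Lemma~\ref{red-to-base-curve} I reduce to $Z$ a nonsingular curve, and after passing to a $\QQ$-factorial g-dlt model (which preserves Fano type by Lemma~\ref{fano-type-at-dlt-model}) I may assume $(X, B + \mb{M})$ is $\QQ$-factorial g-dlt; decreasing $t$ slightly, I take $t\in \QQ^{>0}$. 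Since $X$ is of Fano type over $Z$, I run $\phi\colon X\dashrightarrow X'$ a $(-S)$-MMP over $Z$; by negativity lemma $S$ is not contracted, and $S' := \phi_* S$ satisfies $\coeff_{S'}(\phi_* B)\ge t$ by Lemma~\ref{CY-log-discre-no-change}. Boundedness of complements \cite[Theorem 1.8]{B-Fano} supplies, up to shrinking $Z$, an $n\in \NN$ depending only on $d,t$ and a monotonic $n$-complement $K_{X'}+C'$ with $tS'\le C'$, $n(K_{X'}+C')\sim 0/Z$, and coefficients of $C'$ in $\mb{T}_n\cup\{0\}$. Passing to a $\QQ$-factorial dlt model of $(X', C')$ (Fano type is still preserved) and invoking ACC for log canonical thresholds, I may further assume, after shrinking $Z$, that $(X',0)$ is $\epsilon$-lc for some $\epsilon>0$ depending only on $d,t$.

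Next I run $\psi\colon X'\dashrightarrow X''$ a $K_{X'}$-MMP over $Z$, terminating with a Mori fibre space $f''\colon X''\to Z''$ over $Z$, and split into two cases according to $\dim(Z''/Z)$. In Case~1, $\dim(Z''/Z)\ge 1$: the centre of $S$ on $X''$ may fail to be a divisor, but extracting only the valuation of $S$ via a partial dlt modification (Fano type preserved by Lemma~\ref{fano-type-at-dlt-model}) lets me assume the birational transform $S''$ of $S$ is a prime divisor on $X''$. If $S''$ is horizontal$/Z''$, adjunction for fibre spaces \cite[\S 3.4]{B-Fano} together with Lemmas~\ref{fano-type-on-base} and \ref{fano-type-upper} produces Fano type fibrations $(X'', C'')\to Z''$ and $(Z'', C_{Z''}+\mb{N})\to Z$ with $\coeff_{T''} C_{Z''}\ge t/2$, where $T''$ is the image of $S''$ in $Z''$; the inductive bounds $\mc{H}(\dim(X''/Z''),t)$ and $\mc{H}(\dim(Z''/Z),t)$ combine via Lemma~\ref{mul-g-fib}, and Lemma~\ref{num-irr-comp} turns this into a bound on $\sdeg(S^{\nor}/Z)$. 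If $S''$ is vertical$/Z''$, I further run $\upsilon\colon X''\dashrightarrow X'''$ a $(-S'')$-MMP over $Z''$ ending with a good minimal model; letting $S''' := \upsilon_* S''$ and $X'''\to Z'''$ be the contraction induced by $-S'''$, the image $T'''$ of $S'''$ on $Z'''$ is a prime divisor, and now $\mc{V}(\dim(X'''/Z'''),t)$ and $\mc{H}(\dim(Z'''/Z),t)$ combine in the same way to bound $\sdeg(S^{\nor}/Z)$.

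In Case~2, $\dim(Z''/Z)=0$, so $Z''\to Z$ is birational and, after shrinking $Z$, an isomorphism of nonsingular curves. Then $-K_{X''}$ is ample$/Z$, and a general fibre of $X''\to Z$ is an $\epsilon$-lc Fano variety of dimension $d$, hence bounded by \cite[Theorem 1.1]{B-BAB}. Mimicking Steps~4--9 in the proof of Theorem~\ref{vertical-bnd}, I use Lemma~\ref{bir-to-rlbnd-generic-sm-couples} and the relative toroidalisation Theorem~\ref{functorial-toroidalisation} to produce, after a further shrinking of $Z$, a relatively bounded family of projective couples $(Y,D)\to Z$ together with a birational contraction $Y\dashrightarrow X$ over $Z$ such that $(Y,D)$ is log smooth (or strict toroidal) over $Z$ with $D$ horizontal$/Z$, the centre $\centre_Y S$ is an lc centre of $(Y,D)$, and by \cite[Proposition 3.5]{birkar2024irrationality} the general fibre of $S\dashrightarrow \centre_Y S$ is irreducible. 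Relative boundedness of $(Y,D)\to Z$ forces $\mf{n}(\centre_Y S/Z)$ to be bounded in terms of $d,t$ only, and then Lemmas~\ref{mul-g-fib} and \ref{num-irr-comp} yield $\sdeg(S^{\nor}/Z) = \mf{n}(\centre_Y S/Z)$, completing the induction. The main obstacle is precisely this last case: producing a relatively bounded birational model on which the centre of $S$ is simultaneously an lc centre and has irreducible general fibre over its image requires the full toroidal construction, since the general fibres of $X''\to Z$ themselves — although individually bounded — do not a priori assemble into a single relatively bounded couple carrying the requisite boundary information along $S$.
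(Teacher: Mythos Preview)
Your overall strategy matches the paper's, but there is a genuine gap in how you set up the complement step. You invoke \cite[Theorem 1.8]{B-Fano} on $(X', tS')$ directly after a $(-S)$-MMP, but that theorem requires $-(K_{X'}+tS')$ to be nef over $Z$, which a $(-S)$-MMP alone does not ensure. The paper handles this in two stages. First, the $(-S)$-MMP ends with a Mori fibre space $X'\to Z'$ (since $-S$ is not pseudo-effective$/Z$); if $\dim(Z'/Z)\ge 1$, induction on $\dim(X'/Z')$ already bounds $\sdeg((S')^{\nor}/Z')\ge \sdeg(S^{\nor}/Z)$ (using $\mf{n}(Z'/Z)=1$), so one reduces to $Z'=Z$, where $S'$ is ample$/Z$. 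Only then does the paper run a second $(-K_{X'}-tS')$-MMP to $X''$: the ampleness of $S'$ guarantees it is not contracted (Lemma~\ref{no-van-big-divisor}), and now $-(K_{X''}+tS'')$ is nef$/Z$, so complements apply. Your ``negativity lemma'' justification for $S$ surviving the $(-S)$-MMP is also not the right reason: what prevents $S$ from being contracted is that the exceptional divisor of any divisorial step is negative on the contracted ray, whereas $S$ is positive there.

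Two smaller corrections. In the horizontal subcase of your Case~1, if $S''$ is horizontal$/Z''$ then its image is all of $Z''$, so speaking of $\coeff_{T''}C_{Z''}$ is meaningless; the correct argument is simply $\mf{n}(Z''/Z)=1$ combined with the inductive bound on $X''\to Z''$ (this is exactly the paper's Step~4, horizontal case). In Case~2, invoking Theorem~\ref{functorial-toroidalisation} is unnecessary: since $Z$ is a curve and $S$ is horizontal$/Z$, after shrinking $Z$ near its generic point (and Noetherian induction on $\dim\mf{B}$) the induced map $i\colon Z\to\mf{B}$ lands entirely in the log-smooth locus $\mf{B}^{\circ}$, so $(Y,D)\to Z$ is already log smooth and the argument of the paper's Step~5 suffices without any of the toroidal machinery from Steps~6--9 of Theorem~\ref{vertical-bnd}.
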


\begin{proof}
    By Lemma~\ref{fano-type-induction}, we can assume that $Z$ is a smooth curve.
    By Lemma~\ref{base-case-r-dim=1}, the result holds for $d=1$.
    We prove the result by induction on $d$, that is, 
    we assume that the result holds for every Fano type fibration of relative dimension $\le d-1$. 

    \medskip
    
    \emph{Step 1}.  
    By Lemma~\ref{fano-type-at-dlt-model},
    we can assume that $(X, B + \mb{M})$
    is a $\QQ$-factorial g-dlt g-pair.
    Set $\alpha := \coeff_S B \ge t$.  Write
    \[ K_X + (B - \alpha S) + \mb{M}_X \sim_{\RR} - \alpha S/Z. \]
    By \cite[Lemma 4.4]{B-Zhang}, as $-S$ is non-pseudo-effective$/Z$, 
    we can run $\phi\colon X\dashrightarrow X'$ a $(-S)$-MMP over $Z$
    that ends with a Mori fibre space $X'\to Z'$ of $S$; in particular,
    $S':=\phi_* S$ is ample$/Z'$, hence $S'$ is horizontal$/Z'$.
    Moreover, by Lemma~\ref{Fano-type-pushdown} and Lemma~\ref{fano-type-upper}, 
    $X'$ is also of Fano type over $Z$, hence of Fano type over $Z'$.
    Thus, $(X', B' + \mb{M})$ is a Fano type fibration over $Z'$, where $B' := \phi_* B$.

    Note that $Z'\to Z$ is a contraction of normal varieties, so $\mf{n}(Z'/Z) = 1$.
    If $\dim (Z'/Z) \ge 1$, then 
    by induction hypothesis on the relative dimension $\dim (X'/Z')$, 
    \[ \sdeg \big((S')^{\nor}/Z'\big) \]
    is bounded from above depending only on $d,t$.  By Lemmas~\ref{mul-g-fib} and \ref{num-irr-comp},
    \[ \sdeg (S^{\nor}/Z) = \sdeg \big( (S')^{\nor}/Z \big) \le \mf{n}(Z'/Z)\cdot 
    \mf{n}\big( (S')^{\nor}/Z' \big) = \sdeg \big( (S')^{\nor}/Z' \big) \]
    is also bounded from above depending only on $d,t$.  Thus, 
    we can assume that the contraction $Z'\to Z$ is an isomorphism of smooth curves.

    \medskip

    \emph{Step 2}.  Decreasing $t$ if necessary, we can assume that $t\in \QQ^{>0}$.
    Since $X'$ is of Fano type over $Z$, by \cite{BCHM}, we can run an MMP$/Z$ on the divisor 
    \[ -K_{X'} - t S' \sim_{\RR} (B'- t S') + \mb{M}_X, \]
    which is pseudo-effective$/Z$.  Denote by $\varphi\colon X'\dashrightarrow X''$
    the $(-K_{X'} - tS')$-MMP over $Z$ that ends with a minimal model $X''\to Z$ 
    of the divisor $-K_{X'} - tS'$.  Set $S'' := \varphi_* S'$, then
    $- (K_{X''} + t S'')$ is nef$/Z$.  Note that $S''$ is a nonzero prime divisor on $X''$
    as $S'$ is ample$/Z$; see Lemma~\ref{no-van-big-divisor}.  Moreover,
    by Lemma~\ref{Fano-type-pushdown}, $X''$ is also of Fano type over $Z$.

    By \cite[Theorem 1.8]{B-Fano}, up to shrinking
    $Z$ near its generic point, there is a monotonic $n$-complement $K_{X''} + B^+$ of $K_{X''} + t S''$
    for some $n\in \NN$ depending on $d,t$, that is,
    \begin{itemize}
        \item $(X'', B^+)$ is lc,
        \item $n(K_{X''} + B^+) \sim 0/Z$, and
        \item $t S'' \le B^+$.
    \end{itemize}
    In particular, the coefficients of $B^+$ belong to $\mathbf{T}_n$.  
    Set $\beta := \coeff_{S''} B^+ \in \mathbf{T}_n$. 

    \medskip

    \emph{Step 3}.  Let $(X''', B''')$ be a $\QQ$-factorial dlt model of $(X'', B^+)$.
    The coefficients of $B'''$ also belong to the set $\mathbf{T}_n$.
    Denote by $S'''$ the birational transform of $S''$ to $X'''$.
    Note that $S'''$ may not be big$/Z$.  
    By \cite[Lemma 2.48]{B-Fano}, shrinking $Z$ near its generic point if necessary,
    we can assume that $(X''', 0)$ is $\epsilon$-lc for some $\epsilon>0$ depending only on $d, n$, whence only on $d,t$.
    Run $\psi\colon X''' \dashrightarrow \wt{X}$ a $K_{X'''}$-MMP over $Z$
    that ends with a Mori fibre space $\wt{f}\colon \wt{X}\to \wt{Z}$ of $K_{X'''}$. 
    \[\xymatrix{
     & & X'''\ar[d]\ar@{-->}[r]^{\psi} & \wt{X}\ar[dd]^{\wt{f}} \\
    X\ar@{-->}[r]^{\phi}\ar[d]_f & X'\ar@{-->}[r]^{\varphi} & X'' &  \\
    Z & & &  \wt{Z}\ar[lll]
    }\]
    Then $-K_{\wt{X}}$ is ample$/\wt{Z}$.
    By \cite[Lemma 3.38]{km98}, $(\wt{X}, 0)$ is also $\epsilon$-lc. 

    \medskip

    \emph{Step 4}.
    Assume that $\dim (\wt{Z}/Z) \ge 1$.  By \cite[Corollary 1.4.3]{BCHM},
    let $\pi\colon X^{\circ} \to \wt{X}$ be a birational contraction extracting only the valuation of $S$,
    where $X^{\circ}$ is $\QQ$-factorial;
    if the centre of the valuation of $S$ on $\wt{X}$ is a divisor, then $\pi$ is the identity map.
    Let $\wt{B}$ be the pushdown of $B'''$ to $\wt{X}$.  Write
    \[ K_{X^{\circ}} + B^{\circ} = \pi^*(K_{\wt{X}} + \wt{B}). \]
    By Lemma~\ref{fano-type-at-dlt-model} and Lemma~\ref{fano-type-upper}, 
    $(X^{\circ}, B^{\circ}) \to Z$ is a Fano type log Calabi-Yau fibration,
    hence $(X^{\circ}, B^{\circ}) \to \wt{Z}$ is also a Fano type fibration.
    
    Let $S^{\circ}$ be the centre of $S$ on $X^{\circ}$.
    If $S^{\circ}$ is horizontal$/\wt{Z}$, then
    \[ \sdeg \big( (S^{\circ})^{\nor}/\wt{Z}\big) \]
    is bounded from above depending only on $d,n$ (hence depending only on $d,t$)
    by applying induction hypothesis 
    to the Fano type fibration $(X^{\circ}, B^{\circ})\to \wt{Z}$.
    Again, by Lemmas~\ref{mul-g-fib} and \ref{num-irr-comp}, we have
    \[ \sdeg (S^{\nor}/Z) = \sdeg \big( (S^{\circ})^{\nor}/Z \big) \le \sdeg \big( (S^{\circ})^{\nor}/\wt{Z}\big), \]
    which shows that $\sdeg (S^{\nor}/Z)$ is bounded from above depending only on $d,t$.
    Thus, we can assume that $S^{\circ}$ is vertical$/\wt{Z}$.

    Recall that $X^{\circ}$ is also of Fano type over $\wt{Z}$.
    Run $\upsilon\colon X^{\circ}\dashrightarrow X^{\triangle}$ a $(-S^{\circ})$-MMP over $\wt{Z}$
    that ends with a good minimal model $X^{\triangle}$ of $-S^{\circ}$.
    By negativity lemma, $S^{\circ}$ is not contracted by $\upsilon$.
    Denote by $f^{\triangle}\colon X^{\triangle}\to Z^{\triangle}$ the contraction$/\wt{Z}$ induced by $-S^{\triangle}$,
    where $S^{\triangle} := \upsilon_* S^{\circ}$.
    As $S^{\triangle}$ is vertical$/\wt{Z}$, the contraction $Z^{\triangle}\to \wt{Z}$ is birational.
    \[\xymatrix{
    X^{\circ}\ar@{-->}[r]^{\upsilon}\ar[d] & X^{\triangle}\ar[d]^{f^{\triangle}} \\
    \wt{Z}\ar[d] & Z^{\triangle}\ar[l]\ar[ld] \\
    Z &
    }\]
    Denote by $T^{\triangle}$ the image of $S^{\triangle}$ in $Z^{\triangle}$,
    which is a prime divisor on $Z^{\triangle}$.  
    As $-S^{\triangle}$ is nef$/Z^{\triangle}$, $S^{\triangle}$ is 
    the unique irreducible component of $\supp ((f^{\triangle})^* T^{\triangle})$.
    Then we can write
    \[ S^{\triangle} = b\cdot (f^{\triangle})^* T^{\triangle} \]
    for some $b\in \QQ^{>0}$.  Let $B^{\triangle}$ be the pushdown of $B^{\circ}$ to $X^{\triangle}$.
    We can write
    \[ K_{X^{\triangle}} + B^{\triangle} \sim_{\RR} (f^{\triangle})^*(K_{Z^{\triangle}} + B_{Z^{\triangle}} + \mb{N}_{Z^{\triangle}}), \]
    where $(Z^{\triangle}, B_{Z^{\triangle}} + \mb{N})$ is a g-lc g-pair.
    By Lemma~\ref{fano-type-on-base}, $Z^{\triangle}$ is of Fano type over $Z$.  Then 
    \[ (Z^{\triangle}, B_{Z^{\triangle}} + \mb{N}) \to Z \]
    is also a Fano type fibration of relative dimension 
    \[ \dim (Z^{\triangle}/Z) = \dim (\wt{Z}/Z) \le d-1. \]
    By \cite[Lemma 3.7]{B-Fano}, the coefficient of $T^{\triangle}$ in $B_{Z^{\triangle}}$
    is $\ge \beta$, where $\beta = \coeff_{S''} B^+$ is $\ge t$ as in Step 2.
    By induction hypothesis, 
    \[ \sdeg \big( (T^{\triangle})^{\nor}/Z\big) \]
    is bounded from above depending only on $d,t$.  Moreover, by Theorem~\ref{vertical-bnd},
    \[ \sdeg \big( (S^{\triangle})^{\nor}/T^{\triangle} \big)  \]
    is also bounded from above depending only on $d,t$.  As $S^{\triangle}\to Z$ has the factorisation
    \[ S^{\triangle} \to T^{\triangle} \to Z, \]
    we can conclude that
    \[ \sdeg (S^{\nor}/Z) = \sdeg \big((S^{\triangle})^{\nor}/Z\big) 
    \le \sdeg \big( (S^{\triangle})^{\nor}/T^{\triangle} \big) \cdot \sdeg \big( (T^{\triangle})^{\nor}/Z\big) \]
    is bounded from above depending only on $d,t$ by Lemmas~\ref{mul-g-fib} and \ref{num-irr-comp}.
    Therefore, we can assume that $\wt{Z}\to Z$ is an isomorphism of nonsingular curves. 

    \medskip

    \emph{Step 5}.  By construction in Step 3, we have that
    \begin{itemize}
        \item $(\wt{X}, 0)$ is $\epsilon$-lc,
        \item $-K_{\wt{X}}$ is ample$/Z$,
        \item $K_{\wt{X}} + \wt{B} \sim_{\QQ} 0/Z$, and
        \item the coefficients of $\wt{B}$ belong to $\mathbf{T}_n$.
    \end{itemize}
    Note that $S'''$ may be contracted by $\psi$.  
    The fibration $(\wt{X}, \wt{B}) \to Z$ belongs to the subset $\CY_{d, \epsilon, n}^{\cur}$ 
    of $\CY_{d, \epsilon, n}$;
    see Notations~\ref{notation-bnd-family} and \ref{notation-bnd-family-curves}.
    By Lemma~\ref{bir-to-rlbnd-generic-sm-couples}, there is a relatively bounded family of
    couples $\CY_{d, \epsilon, n}^{\text{sm}}$ associated to $\CY_{d, \epsilon, n}$.
    Denote by $\mc{Q}$ the subset of $\CY_{d, \epsilon, n}^{\text{sm}}$ associated to
    the subset $\CY_{d, \epsilon, n}^{\cur}$ of $\CY_{d, \epsilon, n}$.  Then $\mc{Q}$ is also a relatively bounded
    family of couples (over nonsingular curves).
    We can assume that $\mc{Q}$ admits a single universal family
    \[ \Phi\colon (\mf{X}, \mf{D}) \to \mf{B}, \]
    where $\mf{X}$ and $\mf{B}$ are varieties, $\mf{D}$ is a reduced divisor on $\mf{X}$, and
    $\Phi$ is a projective morphism.  
    By construction in the proof of Lemma~\ref{bir-to-rlbnd-generic-sm-couples},
    there exists a maximal open subset $\mf{B}^{\circ}\subseteq \mf{B}$
    such that $(\mf{X}, \mf{D})$ is log smooth over $\mf{B}^{\circ}$; see \S\ref{log-smooth}.

    For $\wt{f}\colon (\wt{X}, \wt{B}) \to Z$, there is a couple $\mu\colon (Y, D)\to Z$ in $\mc{Q}$
    admitting a rational map $\wt{\phi}\colon \wt{X}\dashrightarrow Y$ over $Z$ 
    that satisfies all the properties listed in 
    Lemma~\ref{bir-to-rlbnd-generic-sm-couples} (up to shrinking $Z$ near its generic point).
    By Lemma~\ref{bir-universal-family}, we can assume there is a morphism $i\colon Z\to \mf{B}$.
    \[\xymatrix{
    \wt{X}\ar[rd]_{\wt{f}}\ar@{-->}[r]^-{\wt{\phi}} & Y\ar[d]^{\mu}\ar[r] & \mf{X}\ar[d]^{\Phi} \\
     & Z\ar[r]^i & \mf{B}
    }\]
    By induction on $\dim \mf{B}$, we can assume that $i(Z)$ is not entirely contained in $\mf{B}\setminus \mf{B}^{\circ}$.
    Thus, up to shrinking $Z$ near its generic point further, 
    we can assume that the whole $i(Z)$ is contained in $\mf{B}^{\circ}$.
    Then $(Y, D)\to Z$ is log smooth.  Write
    \[ K_Y + B_Y = (\wt{\phi}^{-1})^*(K_{\wt{X}} + \wt{B}), \]
    where $B_Y$ may have negative coefficients.  By Lemma~\ref{bir-to-rlbnd-generic-sm-couples}, $B_Y \le D$.
    Then by \cite[Lemma 2.4]{birkar2024irrationality}, we have
    \[ 0\le a(\wt{S}, Y, D) \le a(\wt{S}, Y, B_Y) = a(\wt{S}, \wt{X}, \wt{B}) < 1. \]
    As $(Y, D)$ is a log smooth couple, $a(\wt{S}, Y, D) = 0$ by \cite[Proposition 3.5]{birkar2024irrationality}, hence
    \[ C := \centre_Y \wt{S} \] 
    is a horizontal$/Z$ lc centre of the couple $(Y, D)$.
    Then $C$ is a horizontal$/Z$ stratum of $\floor{D}$.  Since $(Y, D)\to Z$ is relatively bounded,
    $C\to Z$ is also relatively bounded, so $\mf{n}(C/Z)$ is bounded from above 
    depending only on $\mc{Q}$, whence only on $d,t$.  
    Let $S_Y$ be a divisor over $Y$ extracting the valuation of $S$.
    Since $(Y, D)$ is log smooth, a general fibre of $S_Y \to C$ is irreducible.
    As $S^{\nor}\to Z$ has the factorisation
    \[ S^{\nor} \dashrightarrow (S_Y)^{\nor} \to C \to Z, \]
    where the first map is birational, then by Lemmas~\ref{mul-g-fib} and \ref{num-irr-comp}, we can conclude that 
    $\sdeg (S^{\nor}/Z) = \mf{n}(C/Z)$ is bounded from above depending only on $d,t$.
\end{proof}


\subsection{Proof of $\mc{H}(d, t)$}\label{main-proof-S-d-t}

We are now able to give the proof of Theorem~\ref{dim-d-t}.

\begin{theorem}[=Theorem~\ref{dim-d-t}]\label{pf-dim-d-t}
    Let $d\in \NN$ and let $t\in \RR^{>0}$.  Then $\mc{H}(d, t)$ holds for $\CY (d, t)$.
\end{theorem}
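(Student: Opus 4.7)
The plan is to argue by induction on the relative dimension $d$. The base case $d=1$ is already settled by Lemma~\ref{base-case-r-dim=1}, which bounds $\sdeg(S^{\nor}/Z) \le 2/t$ directly from the identity $K_F + B_F + M_F \sim_{\RR} 0$ on a general fibre $F \cong \PP^1$. So I assume $d \ge 2$ and, as inductive hypothesis, that $\mc{H}(k, t)$ holds for $\CY(k, t)$ for every $1 \le k \le d-1$.

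For the induction step, given an arbitrary $\Set{(X/Z, B + \mb{M}), S} \in \CY(d, t)$, I would first apply Lemma~\ref{red-to-base-curve} to reduce to the case where $Z$ is a nonsingular curve, and then apply Lemma~\ref{red-to-Q-fac-klt} (whose hypothesis is exactly the inductive assumption) to further reduce to the subset $\Spair(d, t)$ of $\QQ$-factorial g-klt fibrations over curves. What remains is therefore to establish $\mc{H}(d,t)$ for every datum $\Set{(X/Z, B + \mb{M}), S} \in \Spair(d, t)$.

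Given such a datum, I would run a $(-S)$-MMP over $Z$ as in the proof of Lemma~\ref{red-to-Q-fac-klt}. Since $Z$ is a curve and any birational contraction between normal curves is an isomorphism, the MMP must terminate with a Mori fibre space $\phi\colon X \dashrightarrow X'$ over $Z' = Z$ itself, and $S' := \phi_* S$ is ample$/Z$. Setting $\alpha := \coeff_S B$, I would rewrite
\[ B' = (1-\epsilon)(B' - \alpha S') + \bigl(\epsilon(B' - \alpha S') + \tfrac{\alpha}{2}S'\bigr) + \tfrac{\alpha}{2}S' \]
for small $\epsilon > 0$ and replace the middle ample$/Z$ piece by a general effective ample$/Z$ divisor $H'$, obtaining a new $\QQ$-factorial g-klt generalised log Calabi-Yau fibration $(X', C' + \mb{M}) \to Z$ in which $C'$ contains the ample$/Z$ component $H'$ and the horizontal$/Z$ component $S'$ still has coefficient $\ge t/2$. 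Since $C'$ is big$/Z$, the characterisation in \S\ref{fano-type-defn} shows that $X'$ is of Fano type over $Z$, so $(X', C' + \mb{M}) \to Z$ is a Fano type generalised log Calabi-Yau fibration.

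At this point I would invoke Theorem~\ref{Fano-type-lCY-fib} with parameter $t/2$ to bound $\sdeg((S')^{\nor}/Z)$ in terms of $d$ and $t$. Since $\phi$ does not contract $S$, Remark~\ref{bir-map-irr-comp} together with Lemma~\ref{num-irr-comp} yields $\sdeg(S^{\nor}/Z) = \sdeg((S')^{\nor}/Z)$, completing the induction. All the genuine technical content -- boundedness of complements, BAB for $\epsilon$-lc Fano varieties, and the toroidalisation and cycle-degree arguments that pull in $\mc{V}(k, t)$ for $k \le d-1$ -- has already been absorbed into Theorem~\ref{Fano-type-lCY-fib}; the main theorem itself is a short bookkeeping consequence of the two reduction lemmas and the Fano type case, so there is no further obstacle beyond checking that the construction above indeed produces a Fano type datum preserving the coefficient lower bound (up to the harmless factor $1/2$).
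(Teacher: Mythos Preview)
Your overall strategy matches the paper's proof: induct on $d$, use Lemmas~\ref{red-to-base-curve} and~\ref{red-to-Q-fac-klt} to reduce to $\Spair(d,t)$, run a $(-S)$-MMP to a Mori fibre space, conclude Fano type, and invoke Theorem~\ref{Fano-type-lCY-fib}. There is, however, one genuine gap and one unnecessary detour.

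The gap is your claim that ``since $Z$ is a curve and any birational contraction between normal curves is an isomorphism, the MMP must terminate with a Mori fibre space $\phi\colon X \dashrightarrow X'$ over $Z' = Z$ itself.'' This is not justified: the Mori fibre space structure is $g\colon X'\to Z'$ over $Z$, and nothing forces $Z'$ to be a curve --- it may have any dimension between $1$ and $d$. The case $\dim(Z'/Z)\ge 1$ must be handled separately: then $\dim(X'/Z')\le d-1$, and $(X', B'+\mb{M})\to Z'$ is again a generalised log Calabi-Yau fibration with $S'$ horizontal$/Z'$ of coefficient $\ge t$, so the inductive hypothesis bounds $\sdeg\big((S')^{\nor}/Z'\big)$, and Lemmas~\ref{mul-g-fib} and~\ref{num-irr-comp} finish since $Z'\to Z$ is a contraction. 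Only after this case distinction can you assume $Z'\to Z$ is an isomorphism. This is precisely what the paper does.

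The detour is your reconstruction of the boundary $C'$ via the $\epsilon$-decomposition and the general ample divisor $H'$. That construction is the content of the proof of Lemma~\ref{red-to-Q-fac-klt}, which you have already applied --- so you are now starting with $(X, B+\mb{M})$ already $\QQ$-factorial g-klt. After the $(-S)$-MMP, $(X', B'+\mb{M})$ remains $\QQ$-factorial g-klt by Lemma~\ref{CY-log-discre-no-change}, and since $S'\le B'$ with $S'$ ample$/Z$, the divisor $B'$ is already big$/Z$. The characterisation in \S\ref{fano-type-defn} then gives that $X'$ is of Fano type over $Z$ immediately, and Theorem~\ref{Fano-type-lCY-fib} applies to $(X', B'+\mb{M})\to Z$ directly with parameter $t$ rather than $t/2$. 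This is what the paper does; your extra step is harmless but redundant.
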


\begin{proof}
    We argue by induction on $d$.  If $\dim (X/Z) = 1$, then $\mc{H}(1, t)$ holds by Lemma~\ref{base-case-r-dim=1}.
    Thus, we can assume that $\mc{H}(k, t)$ holds for $\CY (k, t)$ for every $1\le k\le d-1$.
    By Lemma~\ref{red-to-base-curve}, it suffices to consider $\Set{(X/Z, B + \mb{M}), S}\in \CY (d, t)$,
    where $Z$ is a smooth curve.  Moreover, by Lemma~\ref{red-to-Q-fac-klt}, we can assume further
    that $(X, B + \mb{M})$ is a $\QQ$-factorial g-klt g-pair.

    By \cite[Lemma 4.4]{B-Zhang}, as $-S$ is non-pseudo-effective$/Z$, we can run $\phi\colon X\dashrightarrow X'$
    a $(-S)$-MMP over $Z$ that ends with a Mori fibre space $g\colon X'\to Z'$ over $Z$.
    Set $S' := \phi_* S$, then $S'$ is ample$/Z'$.  
    If $\dim (Z'/Z) \ge 1$, we can conclude that
    \[ \sdeg (S^{\nor}/Z) = \sdeg \big( (S')^{\nor}/Z \big)\le \sdeg \big( (S')^{\nor}/Z' \big)\cdot \mf{n}(Z'/Z)
    = \sdeg \big( (S')^{\nor}/Z' \big)\]
    is bounded from above depending only on $d,t$ 
    by induction hypothesis, Lemma~\ref{mul-g-fib} and Lemma~\ref{num-irr-comp}.
    Thus, we can assume that $Z'\to Z$ is an isomorphism of smooth curves.  Then $S'$ is ample$/Z$.

    Set $B' := \phi_* B$.  As $(X, B + \mb{M})$ is $\QQ$-factorial and g-klt, 
    $(X', B' + \mb{M})$ is also a $\QQ$-factorial g-klt g-pair by Lemma~\ref{CY-log-discre-no-change}.
    Since $S'$ is ample$/Z$, and since
    \[ K_{X'} + B' + \mb{M}_{X'} \sim_{\RR} 0/Z, \]
    we can conclude that $X'$ is of Fano type over $Z$.
    Therefore, by Theorem~\ref{Fano-type-lCY-fib}, $\sdeg (S^{\nor}/Z)$
    is bounded from above depending only on $d,t$.
\end{proof}


\medskip

\printbibliography
 
\vspace{1em}
 
\noindent\small{Caucher Birkar} 

\noindent\small{\textsc{Yau Mathematical Sciences Center, Tsinghua University, Beijing, China} }

\noindent\small{Email: \texttt{birkar@mail.tsinghua.edu.cn}}

\vspace{1em}
 
\noindent\small{Santai Qu} 

\noindent\small{\textsc{Institute of Geometry and Physics, University of Science and Technology of China, Hefei, Anhui Province, China} }

\noindent\small{Email: \texttt{santaiqu@ustc.edu.cn}}

\end{document}